\title{The satisfiability threshold and solution space of random uniquely extendable constraint satisfaction problems}
\author{Pu Gao\thanks{Research supported by NSERC RGPIN-04173-2019.} \\ University of Waterloo \\ pu.gao@uwaterloo.ca \and Theodore Morrison\thanks{Research supported by an Ontario Graduate Scholarship} \\ University of Waterloo\\ tmorriso@uwaterloo.ca}
\date{}
\newtheorem{theorem}{Theorem}[section]
\newtheorem{lemma}[theorem]{Lemma}
\newtheorem{definition}[theorem]{Definition}
\newtheorem{conjecture}[theorem]{Conjecture}
\newtheorem{claim}[theorem]{Claim}
\newtheorem{remark}[theorem]{Remark}
\newtheorem{problem}[theorem]{Problem}
\newcommand{\ZZ}{\mathbb{Z}}
\newcommand{\FF}{\mathbb{F}}
\newcommand{\RR}{\mathbb{R}}
\newcommand{\eps}{\varepsilon}
\newcommand{\fhi}{\varphi}
\newcommand{\lt}{\left}
\newcommand{\rt}{\right}
\renewcommand{\subset}{\subseteq}
\newcommand{\Id}{\operatorname{Id}}
\newcommand{\supp}{\operatorname{supp}}
\theoremstyle{definition}
\DeclareSymbolFont{symbolsC}{U}{txsyc}{m}{n}
\DeclareMathSymbol{\strictif}{\mathrel}{symbolsC}{74}
\DeclareMathSymbol{\strictfi}{\mathrel}{symbolsC}{75}
\DeclareMathSymbol{\strictiff}{\mathrel}{symbolsC}{76}
\def\pr{{\mathbb P}}
\newcommand\remove[1]{{}}
\newcommand\ind[1]{\boldsymbol{1}_{\{#1\}}}
\begin{document}

\maketitle

\begin{abstract}
    We study the satisfiability threshold and solution-space geometry of random constraint satisfaction problems defined over uniquely extendable (UE) constraints. Motivated by a conjecture of Connamacher and Molloy, we consider random $k$-ary UE-SAT instances in which each constraint function is drawn, according to a certain distribution $\pi$, from a specified subset of uniquely extendable constraints over an $r$-spin set. We introduce a flexible model $H_n(\pi,k,m)$ that allows arbitrary distributions $\pi$ on constraint types, encompassing both random linear systems and previously studied UE-SAT models. Our main result determines the satisfiability threshold for a wide family of distributions $\pi$. Under natural reducibility or symmetry conditions on $\supp(\pi)$, we prove that the satisfiability threshold of $H_n(\pi,k,m)$ coincides with the classical $k$-XORSAT threshold.

\end{abstract}

\section{Introduction}

Random constraint satisfaction problems (CSPs) were introduced to study how computationally hard problems--especially NP-complete problems--behave on average, rather than in the worst case. CSPs are central objects of study across multiple disciplines: in combinatorics (graph and hypergraph colouring, independent sets, matchings, regular subgraphs), in computer science ($k$-SAT, $k$-XORSAT), and in physics (the hard-core model, the monomer–dimer model, the Ising model).

A typical CSP instance consists of a set of $n$ variables and a set of $m$ constraints. The variables take values from a predefined set $\Omega$, often called the set of spins in physics. Each constraint is specified by a set called the {\em constraint variables}, and a function $\psi$ called the {\em constraint function}. The constraint variables is a small subset of $k$ variables $x_1,\ldots, x_k$ (for some $k\ge 1$) from the total set of $n$ variables, and the constraint function $\psi: \Omega^k\to \{0,1\}$ evaluates any $k$-tuple in $\sigma\in\Omega_k$ to 1 (we say the constraint is satisfied by $\sigma$) or 0 (we say the constraint is not satisfied by $\sigma$). A constraint is called $k$-ary if its constraint function is defined on $k$ variables, i.e.\ $\Omega^k$. 
Different constraint functions arise from different types of constraint satisfaction problems.  For example, in graph colouring, each edge induces a binary constraint ($k=2$): its endpoints $x,y$ satisfy the constraint if $x$ and $y$ receive distinct values. In some CSPs  all constraints have the same constraint function (e.g.\ graph colouring), while in others they may vary. For instance, in a system of linear equations over a field ${\mathbb F}$, the constraint function corresponding to an equation $a_1x_1+\ldots+a_kx_k=b$ is determined by both the coefficients $(a_1,\ldots, a_k)$ and the right hand side $b$, and the equations within a system may each have different coefficients and right-hand sides.

In a random $k$-ary CSP, each constraint first selects its constraint function from a specified set $\Gamma$ according to a probability distribution $\pi$, and then chooses $k$ (ordered) variables it involves uniformly at random from the set of $n$ variables. This is the simplest random CSP model that has been extensively studied across many problems~\cite{duboismandler2002XORSAT,connamacher2012satisfiability,ayre2020satisfiability,coja2020replica,achlioptas2006twomomentsNAESAT}, though more complicated random models have also been investigated and analysed~\cite{coja2020rank,coja2024full,lelarge2013bypassing}.

Research on random CSPs focuses heavily on phase transition phenomena, which arise ubiquitously across a wide range of problems and models~\cite{duboismandler2002XORSAT,connamacher2012satisfiability,ayre2020satisfiability,coja2020replica,achlioptas2006twomomentsNAESAT}. Roughly speaking, as the number of constraints $m$ increases, the solution space of a random CSP undergoes a sequence of structural transitions: first, solutions begin to partition into distinct clusters, marking an algorithmic barrier for ``local'' algorithms; next, solutions within each cluster start to agree on a large fraction of variables; then, long-range correlations emerge, so that the joint marginal distribution of two variables is no longer close to the product of their individual marginals in a uniformly random solution; and finally, the entire system becomes unsatisfiable. These transitions--commonly referred to as the clustering, freezing, replica-symmetry-breaking (RSB), and satisfiability thresholds--form the core of modern studies in random CSPs.

As noted above, research on random CSPs is vast~\cite{achlioptas2006twomomentsNAESAT,ayre2020satisfiability,coja2020replica,connamacher2012satisfiability,dingslysun2022kSAT,duboismandler2002XORSAT,goerdt2012beyondxorsat,ibrahimi2015solutionclustering,mezard2003spinglass}. The most relevant strands for this work concern random linear equations and random uniquely extendable CSPs. The study of random linear equations dates back to the determination of the satisfiability threshold of random 3-XORSAT~\cite{duboismandler2002XORSAT}, which was later extended to $k$-XORSAT~\cite{dietzfelbinger2010XORSATthreshold,pittelsorkin2016XORSATthreshold} and to linear equations over $GF(3)$ and $GF(4)$~\cite{goerdt2012beyondxorsat}. Finally, the satisfiability threshold for linear equations over any finite field is determined by Ayre et al.~\cite{ayre2020satisfiability}. Another extension from the study of random $k$-XORSAT is a model of uniquely extendable satisfiability problem (UE-SAT) introduced by Connamacher and Molloy~\cite{connamacher2012satisfiability}. The motivation for introducing UE-SAT, as noted in~\cite{connamacher2012satisfiability},  stems from the observation that the proof in~\cite{duboismandler2002XORSAT} relies primarily on the property of the {\em unique extendability} of a linear equation, formally defined in Definition~\ref{def:UE} below. Connamacher and Molloy~\cite{connamacher2012satisfiability} conjectured the satisfiability threshold and proved a special case of it. The main purpose of this paper is to address that conjecture. Before doing so, we introduce the necessary definitions.

\begin{definition}[Uniquely Extendability]\label{def:UE}
Let $k\ge 1$ be an integer. A $k$-ary constraint function $\psi$ is called uniquely extendable, if for every $1\le i\le k$ and for every $\sigma_1,\ldots,\sigma_{i-1},\sigma_{i+1},\ldots,\sigma_k\in\Omega$, there is a unique $\tau\in \Omega$ such that $\psi(\sigma_1,\ldots,\sigma_{i-1},\tau,\sigma_{i+1},\ldots,\sigma_k)=1$.  
\end{definition}

Fixing integers $k$ and $r$, the set $\Lambda_{k,r}$ of all $k$-ary uniquely extendable constraint functions over a set of $r$ spins (i.e.\ $|\Omega|=r$) is finite. Connamacher and Molloy's model on random UE-SAT is given as below. Let $V_n=\{v_1,\ldots, v_n\}$ be a set of $n$ variables. Independently draw $m$ uniquely extendable constraints $\psi_1,\ldots,\psi_m$ such that for each $1\le j\le m$, the constraint function of $\psi_j$ is chosen uniformly from $\Lambda_{k,r}$; and the $k$ constraint variables of $\psi_j$ are chosen uniformly at random from $V_n$, and are chosen independently of the constraint function of $\psi_j$. Connamacher and Molloy called this the $(k,r)$-UE-SAT.

Before stating Connamacher and Molloy's conjecture on the satisfiability threshold of the $(k,r)$-UE-SAT, it is necessary to call the attention to the satisfiability threshold to random $k$-XORSAT. Given a sequence of probability spaces indexed by $n$, we say a sequence of events $A_n$ occurs asymptotically almost surely (a.a.s.), if $\lim_{n\to\infty} \pr[A_n]=1$.

\begin{theorem}[\cite{pittel2016satisfiability}]\label{thm:equation-sat} Let $k\ge 3$ be a fixed integer, and let 
   \begin{align*}
            \rho_{k,d} &= \sup\lt\{x\in [0,1]: x=1-\exp(-dx^{k-1})\rt\}\\
            d_k&=\inf\lt\{d>0:\rho_{d,k}-d\rho_{d,k}^{k-1}+(1-1/k)d\rho_{d,k}^k<0\rt\}.
        \end{align*}
Suppose that $\Phi$ is a random $k$-XORSAT instance with $n$ variables and $m$ $k$-XORSAT clauses. Then, for every $\eps>0$, a.a.s.\ $\Phi$ is satisfiable if $m<(d_k/k-\eps)n$ and a.a.s.\ $\Phi$ is unsatisfiable if $m>(d_k/k+\eps)n$.

\end{theorem}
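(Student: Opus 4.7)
The plan is to reduce satisfiability of $\Phi$ to a structural question about the $2$-core of the underlying $k$-uniform hypergraph, which is the classical strategy going back to Dubois--Mandler. Associate to $\Phi$ the random $k$-uniform hypergraph $H=H(n,m)$ on vertex set $[n]$ whose $m$ edges are the supports of the $m$ equations, together with independent uniform right-hand sides $b_1,\ldots,b_m\in\{0,1\}$. If a variable appears in at most one equation, eliminate it by setting it so as to satisfy its (at most one) incident equation; iterating, this \emph{peeling} reduces $\Phi$ to an equivalent system $\Phi_c$ supported on the $2$-core $H_c$ of $H$, with $n_c$ variables and $m_c$ equations. Because the peeling ordering depends only on $H$ and not on the $b_i$, the right-hand sides of $\Phi_c$ remain independent uniform bits, and $\Phi$ is satisfiable iff $\Phi_c$ is.

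Writing $d=km/n$, a standard differential-equation or branching-process analysis of the peeling process gives the concentration
\begin{align*}
\frac{n_c}{n}\xrightarrow{\pr}\rho-d\rho^{k-1}+d\rho^k,\qquad \frac{m_c}{n}\xrightarrow{\pr}\frac{d}{k}\rho^k,
\end{align*}
with $\rho=\rho_{k,d}$ the largest fixed point of $x=1-e^{-dx^{k-1}}$. Subtracting, the excess $n_c-m_c$ concentrates around $n\bigl[\rho-d\rho^{k-1}+(1-1/k)d\rho^k\bigr]$. By the very definition of $d_k$, this bracket is strictly positive for $d<d_k$ and strictly negative for $d>d_k$; hence $n_c-m_c=\Omega(n)$ a.a.s.\ when $m<(d_k/k-\eps)n$, and $m_c-n_c=\Omega(n)$ a.a.s.\ when $m>(d_k/k+\eps)n$.

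The upper bound now follows easily. The coefficient matrix $A_c\in\FF_2^{m_c\times n_c}$ of $\Phi_c$ has strictly more rows than columns, so its left null space has dimension at least $m_c-n_c=\Omega(n)$. Choosing a basis $\lambda_1,\ldots,\lambda_t$ of this null space, each $\lambda_i$ imposes the consistency requirement $\langle\lambda_i,b_c\rangle=0$ on the reduced right-hand side. Since the $\lambda_i$ are linearly independent over $\FF_2$ and the entries of $b_c$ are i.i.d.\ uniform bits, these $t$ conditions are independent Bernoulli$(1/2)$ events, so $\pr[\Phi_c\text{ satisfiable}]\le 2^{-t}=2^{-\Omega(n)}\to 0$.

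For the lower bound I need to show that, when $n_c>m_c$, the matrix $A_c$ a.a.s.\ has full row rank---equivalently, that no non-trivial $\FF_2$-linear combination of equations of $\Phi_c$ sums to the zero polynomial in the variables. My plan is a moment computation in the configuration-model construction of $H_c$, whose degree sequence is well concentrated around a truncated Poisson$(d\rho^{k-1})$ (conditioned on degree $\ge 2$): estimate the expected number of minimal left-kernel witnesses, namely $2$-edge-connected subhypergraphs $S\subseteq H_c$ with $e(S)>v(S)$, and show that this expectation is $o(1)$ throughout the range $d<d_k$. This is the main obstacle: the relevant subhypergraphs can be arbitrarily large, and controlling their count requires a delicate configuration-model computation on the $2$-core. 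An alternative route is a second-moment argument for the number of solutions of $\Phi_c$ combined with Friedgut's sharp-threshold theorem to convert positive probability of satisfiability into an a.a.s.\ statement. Either way, the fundamental reason that one is forced to work with $\Phi_c$ rather than $\Phi$ is that the naive first-moment threshold for $\Phi$ is $d=k$, which is strictly above $d_k$ for $k\ge 3$, so the entropy/energy balance becomes tight only after passing to the $2$-core.
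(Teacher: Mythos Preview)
This theorem is not proved in the paper; it is quoted from Pittel--Sorkin as a known background result, so there is no in-paper proof to compare against. Your outline is the classical 2-core strategy of Dubois--Mandler and its extension to general $k$, and the upper-bound half is correct as stated.

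One inaccuracy in the lower-bound sketch: a nonzero vector in the left $\FF_2$-kernel of $A_c$ corresponds to a nonempty edge set $S$ in which every incident vertex has \emph{even} degree in $S$, not to a ``2-edge-connected subhypergraph with $e(S)>v(S)$''; the first-moment argument is over such even subhypergraphs of the 2-core. As you yourself acknowledge, carrying out that computation in the configuration model for the 2-core (with its concentrated truncated-Poisson degree profile) is the main technical content of the cited reference; your proposal identifies the right approach but stops short of executing it.
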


\begin{conjecture}[Connamacher and Molloy~\cite{connamacher2012satisfiability}] \label{conj:Molloy} Let $(k,r)\ge (3,2)$ be fixed. Let $d_k$ be given as in Theorem~\ref{thm:equation-sat}.  Suppose that $\Phi$ is a random $(k,r)$-UE-SAT instance with $n$ variables and $m$ $k$-ary uniquely extendable constraints. Then, for every $\eps>0$, a.a.s.\ $\Phi$ is satisfiable if $m<(d_k/k-\eps)n$ and a.a.s.\ $\Phi$ is unsatisfiable if $m>(d_k/k+\eps)n$. 
\end{conjecture}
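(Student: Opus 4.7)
The plan is to follow the template that proved the $k$-XORSAT threshold, replacing linearity throughout by unique extendability. First, reduce to the 2-core: let $H$ be the $k$-uniform hypergraph whose edges are the constraint scopes, and let $H_c$ be its 2-core, formed by iteratively removing variables of degree at most $1$. Unique extendability yields a peeling lemma: any assignment of $V(H_c)$ that satisfies every constraint internal to $H_c$ extends to a satisfying assignment of $\Phi$, by restoring peeled variables in reverse order and using the UE property of each peeled constraint to pick the unique value that satisfies it. Thus $\Phi$ is satisfiable if and only if the core sub-instance $\Phi_c$ is.

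For the upper bound, when $m/n > d_k/k + \eps$, the standard 2-core analysis for random $k$-uniform hypergraphs shows a.a.s.\ that $H_c$ has $n_c$ variables and $m_c$ constraints with $m_c - n_c \ge cn$ for some $c = c(\eps) > 0$. Conditional on $H_c$, the constraint functions are still i.i.d.\ uniform in $\Lambda_{k,r}$, and each constraint is satisfied by any fixed assignment with probability exactly $r^{k-1}/r^k = 1/r$. Hence $\mathbb{E}[X_c \mid H_c] = r^{n_c - m_c} \le r^{-cn} \to 0$, where $X_c$ counts satisfying assignments of $\Phi_c$, so Markov's inequality yields $\Phi$ unsatisfiable a.a.s.

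For the lower bound, when $m/n < d_k/k - \eps$, the same analysis gives $m_c \le (1 - c') n_c$ a.a.s.\ (or $H_c$ empty). I would apply a (weighted) second moment argument to $X_c$, writing
\[
\mathbb{E}[X_c^2 \mid H_c] \;=\; \sum_{\sigma, \tau \in \Omega^{V_c}} \prod_{C \in H_c} q\bigl(a_C(\sigma, \tau)\bigr),
\]
where $q(a)$ denotes the joint satisfaction probability of two length-$k$ assignments at Hamming distance $a$ under a uniform $\psi \in \Lambda_{k,r}$, and $a_C(\sigma,\tau)$ is the number of constraint variables of $C$ on which $\sigma$ and $\tau$ disagree. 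Combined with small-subgraph conditioning to absorb the effect of the finitely many short cycle structures in $H_c$, this upgrades Paley-Zygmund's positive-probability conclusion to $\Phi_c$ satisfiable a.a.s.

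The technical heart of the proof is controlling $q(a)$ for general UE constraints. The uniform distribution on $\Lambda_{k,r}$ is invariant under independent permutations of the $r$ spins in each coordinate, so $q(a)$ depends only on the spin-pairing profile of $\sigma, \tau$ on the constraint, and counting ordered pairs of satisfying tuples immediately gives the identity $\sum_{\sigma, \tau \in \Omega^k} q(d(\sigma, \tau)) = r^{2(k-1)}$. Concentration of the second moment then requires showing that the overlap-exponent, as a function of the Hamming agreement $\alpha$ between $\sigma$ and $\tau$, is maximised at the uncorrelated point $\alpha = 1/r$. For XORSAT this is a direct linear-algebraic calculation; for general UE constraints one needs either a symmetrisation or coupling argument showing that $q$ is dominated (in a suitable sense) by its XORSAT analogue, or an explicit character-sum description of $\Lambda_{k,r}$ from which the exponent can be computed directly. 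Connamacher and Molloy handled only the subclass where linearity was available, so pushing the overlap analysis through for the full class $\Lambda_{k,r}$ is where I expect most of the work to lie.
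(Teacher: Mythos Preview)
The statement you are attempting to prove is a \emph{conjecture}, and the paper explicitly leaves it open (``While Conjecture~\ref{conj:Molloy} remains open\ldots''). There is therefore no proof in the paper to compare against. What the paper does prove (Theorems~\ref{thm:sat}--\ref{thm:sat3}) concerns only distributions $\pi$ whose support is reducible (for $k\ge 4$) or has symmetric $F_2$ (for $k=3$), and the method is entirely different from yours: the paper shows algebraically (Theorems~\ref{symthm} and~\ref{cor:reducible}) that any such $\supp(\pi)$ is a family of constraints $\psi_{(\Omega,+),b}$ for a single abelian group $(\Omega,+)$, then reduces satisfiability to a rank statement about the binary coefficient matrix $B$ over $\FF_p$ (Lemma~\ref{indeprows} together with the result of Ayre et al.). No second-moment computation appears anywhere.

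As for the content of your outline: your supercritical argument is sound for the Connamacher--Molloy model specifically, because the uniform distribution on $\Lambda_{k,r}$ is invariant under the action of $\Pi(\Omega)^k$, giving $\Pr[\sigma\models\psi]=1/r$ for every fixed $\sigma$, and the first moment on the $2$-core goes through. (Note that this symmetry fails for general $\pi$, which is why the paper's proof of Theorem~\ref{thm:sat3} takes a different route in the supercritical regime.) Your subcritical argument, however, is not a proof but a description of where the problem lies. You correctly identify that the entire difficulty is the overlap function $q(a)$ --- showing that the second-moment exponent is maximised at the decorrelated overlap --- but you give no argument for this, only the hope that ``a symmetrisation or coupling argument'' or ``an explicit character-sum description of $\Lambda_{k,r}$'' might work. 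Neither is supplied, and there is no reason to believe either is routine: $\Lambda_{k,r}$ contains constraints that are provably not group constraints (Section~\ref{sec:rediciblethm}), so a character-sum description is unlikely to exist in any useful form, and a domination argument against XORSAT would itself need a new idea. In short, your proposal re-states the obstacle rather than overcoming it, and the conjecture remains open.
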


Note that Conjecture~\ref{conj:Molloy} asserts that the satisfiability threshold for a random $(k,r)$-UE-SAT is independent of $r$. This is not surprising, as an analogous phenomenon occurs for random linear equations over finite fields, where the satisfiability threshold is always $d_k/k$, regardless of the field order~\cite{ayre2020satisfiability}. Connamacher and Molloy confirmed that Conjecture~\ref{conj:Molloy} holds for $(k,r)=(3,4)$. These are the smallest values for which the $(k,r)$-UE-SAT problem is non-trivial, as $(2,r)$-UE-SAT does not have a sharp satisfiability threshold, and it was pointed out~\cite{Connamacher2008ThresholdPI} that $(k,r)$-UE-SAT can be reduced to a system of linear equations over $\FF_r$ for $r=2,3$.

There are many $k$-ary UE-SAT problems whose constraint types are  members of $\Lambda_{k,r}$, and these can be of very different nature. Some representative examples include:

\begin{itemize}
\item Solving a system of equations over a group $G_r$ of order $r$, where each equation is of form $\prod_{i=1}^k \sigma_i=1$.
\item Solving a system of linear equations over $GF(r)$ if $r$ is a prime power;
\item Orient every edge of a $k$-regular graph so that the out-degree of every vertex is even.
\item Colour the vertices of a $k$-uniform hypergraphs with $k$ colours so that the set of vertices incident to a common hyperedge receive distinct colours.
\end{itemize}

A random system where constraint types are chosen uniformly at random from $\Lambda_{k,r}$ is somewhat artificial; for instance, one would not naturally mix equations over different groups, linear equations over fields, and hypergraph colouring constraints in the same system. Instead, we propose a more general and often more natural model in which the distribution on $\Lambda_{k,r}$ is not necessrily uniform. In particular, our model allows the use of atomic distributions on $\Lambda_{k,r}$, corresponding to studying random systems consisting of a single class of constraints, such as hypergraph colouring.

\begin{definition}
Given integers $k\ge 1, r\ge 2$ and a distribution $\pi$ over $\Lambda_{k,r}$, a random $k$-UE-SAT system, denoted by $H_n(\pi,k,m)$, is defined as follows. 
\begin{enumerate}[(i)]
    \item The set of variables is $V_n:=\{v_1,\ldots,v_n\}$;
        \item For each number $1\le j\le m$, independently choose a uniformly random $k$-tuple of distinct variables $x_1,\ldots,x_k\in \{v_1,\ldots,v_n\}$ and independently choose a constraint function $\psi\in \Lambda_{k,r}$ according to distribution $\pi$. The $j$-th constraint of $H_n(\pi,k,m)$ is defined by $(\psi,(x_1,\ldots, x_k))$, with $\psi$ being the constraint function, and $(x_1,\ldots, x_k)$ being the constraint variables.
    \end{enumerate}
    If $\pi$ is an atomic distribution where $\pi(\psi)=1$ for some $\psi\in \Lambda_{k,r}$, then we denote $H_n(\pi,k,m)$ simply by $H_n(\psi,k,m)$.
\end{definition}

\subsection{Main results}

Thoughout the paper, $\Omega$ is a finite nonempty set with $r\ge 2$ elements. 
    We refer to $\Omega$ as the spin set, and elements of $\Omega$ as spins. We denote the concatenation of two strings $\sigma_1\in \Omega^i$ and $\sigma_2\in \Omega^j$ by $\sigma_1\sigma_2$, and denote by $\sigma^j$ the concatenation of $\sigma$ with itself $j$ times. The notation $\Pi(S)$ denotes the permutation group on a set $S$. For any $\pi\in \Pi([j])$ and $\sigma=(\sigma_1,\ldots, \sigma_j)\in \Omega^j$, we write $\sigma^\pi=(\sigma_{\pi(1)},\ldots,\sigma_{\pi(j)})$.
Given a $k$-ary constraint function, and $\sigma\in\Omega^k$, we use the simpler notation $\sigma\models \psi$ to denote that $\psi(\sigma)=1$; that is, $\sigma$ satisfies $\psi$.

    \begin{definition}
        A $k$-ary constraint function $\psi$ on $\Omega$ is commutative if, for all $\sigma\in \Omega^k$ and $\pi\in \Pi([k])$,
        \begin{equation*}
            \sigma\models \psi\iff \sigma^\pi \models \psi.
        \end{equation*}
    \end{definition}

    \begin{definition}
    A $k$-ary constraint function $\psi$ is reducible if, for all $d\in \{0,\ldots, k-1\}$, $\sigma,\sigma'\in \Omega^d$, and $\eta,\eta'\in \Omega^{k-d}$ such that
    \begin{align*}
    \sigma\eta\models \psi,\quad \sigma'\eta\models\psi,\quad \sigma\eta'\models \psi,
    \end{align*}
    we also have $\sigma'\eta'\models \psi$.
    \end{definition}

We say $\psi$ admits a constant solution if $\sigma^k\models \psi$ for some $\sigma\in\Omega$. A solution to $x$ to $H_n(\psi,k,m)$ is called a constant solution if $x=\sigma^n$ for some $\sigma\in \Omega$. Note that if $\psi$ admits a constant solution $\sigma^k$ then $H_n(\psi,k,m)$ is always satisfiable for every $m$, as $x=\sigma^n$ is a solution. 
Thus, for satisfiability, it is only interesting to consider $\psi$ that does not admit a constant solution. Our next theorem confirms that the same $d_k$ in Theorem~\ref{thm:equation-sat} is the satisfiability threshold for $H_n(\psi,k,m)$ provided that $\psi$ is reducible and admits no constant solutions.

\begin{theorem}\label{thm:sat}
        Suppose that $k\geq 4$ and that $\psi\in\Lambda_{k,r}$ is reducible. Let $d_k$ be defined as in Theorem~\ref{thm:equation-sat} and let $\eps>0$.
        If $\psi$ has no constant solutions, then
        \begin{equation*}
            \lim_{n\to\infty}\pr[H_n(\psi,k,m)\ \text{is satisfiable}]=\begin{cases}
                1& \text{if}\ m<(d_k/k-\eps)n\\
                0& \text{if}\ m>(d_k/k+\eps)n.
            \end{cases}
        \end{equation*}
    \end{theorem}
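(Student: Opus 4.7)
The plan is to follow the two-step approach to satisfiability thresholds for random $k$-XORSAT pioneered by Dubois and Mandler \cite{duboismandler2002XORSAT} and refined by Pittel and Sorkin \cite{pittel2016satisfiability}: first reduce the satisfiability question to the $2$-core of the constraint hypergraph via unique extendability, then analyze satisfiability on the $2$-core via the first and second moment methods, with the reducibility assumption on $\psi$ entering only in the moment calculations. The peeling step is standard: if a variable $v$ has degree one in the constraint hypergraph, its unique incident constraint can be satisfied by choosing the other $k-1$ constraint variables arbitrarily and then invoking unique extendability to pin down the value at $v$. Iterating, the satisfiability of $H_n(\psi,k,m)$ reduces to the satisfiability of the sub-system induced by the $2$-core $H^{*}$. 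Classical results on the $2$-core of random $k$-uniform hypergraphs then give, for $d=km/n$, explicit asymptotic expressions for the number of core variables $n_c$ and core constraints $m_c$, with $m_c-n_c$ changing sign precisely at $d=d_k$.

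For the upper bound ($m>(d_k/k+\eps)n$, so $m_c-n_c\ge\delta n$ for some $\delta>0$), I would apply the first moment method to the number $Z$ of satisfying assignments on $H^{*}$. By unique extendability, $\psi^{-1}(1)$ has exactly $r^{k-1}$ elements, so a balanced $x\in\Omega^{n_c}$ satisfies a single random constraint with probability exactly $1/r$. A standard entropy calculation bounds $\mathbb{E}[Z]$ by $r^{n_c-m_c+o(n)}$, where the hypothesis that $\psi$ has no constant solutions eliminates the $r$ trivial constant assignments that would otherwise always satisfy every constraint. Markov's inequality then yields unsatisfiability a.a.s.

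For the lower bound ($m<(d_k/k-\eps)n$, so $n_c-m_c\ge\delta n$), the plan is a second moment argument on $Z$. The main computation is the joint satisfaction probability $\pr[x\text{ and }y\text{ both satisfy a random constraint}]$ for fixed $x,y\in\Omega^{n_c}$, which one evaluates in terms of the pair overlap profile of $(x,y)$. Reducibility of $\psi$ is crucial at this step: I would first prove a structural lemma asserting that any reducible UE constraint $\psi$ can be written, after relabeling each coordinate by bijections $\phi_i\colon\Omega\to G$, as the group equation $\phi_1(\sigma_1)\cdots\phi_k(\sigma_k)=e_G$ in some group $G$ of order $r$. This converts $H_n(\psi,k,m)$ into a random system of linear equations over $G$, and the second moment calculation reduces to the one carried out in \cite{ayre2020satisfiability}. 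The resulting bound $\mathbb{E}[Z^2]=O(\mathbb{E}[Z]^2)$ together with Paley--Zygmund gives satisfiability with probability bounded away from zero, which is boosted to a.a.s.\ by a Friedgut-type sharpness argument.

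The crux of the argument is the structural lemma. I expect its proof to proceed by iteratively applying the ``parallelogram'' form of reducibility to partition $\Omega^d$ and $\Omega^{k-d}$ into blocks of sizes $r^{d-1}$ and $r^{k-d-1}$ for each $1\le d\le k-1$, and identifying the induced bijection between these partitions with multiplication in the group $G$ under construction. Once $\psi$ is reduced to a group equation, the moment analysis and hence the critical value $d_k$ transfer from the XORSAT setting, and the hypothesis ``no constant solutions'' rules out the degenerate case in which the trivial constant assignment is always feasible.
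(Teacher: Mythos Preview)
Your overall strategy---prove a structural lemma identifying a reducible UE constraint with a group equation, then import the threshold for random group equations---is exactly what the paper does. The subcritical direction in your sketch (structural lemma, then cite \cite{ayre2020satisfiability}) matches the paper's route; indeed the paper's Theorems~\ref{symthm} and~\ref{cor:reducible} are precisely that structural lemma, and Theorem~\ref{thm:sat3} is the group-equation threshold you plan to invoke.

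The gap is in your supercritical (upper-bound) argument. You present the first moment on the $2$-core as requiring only unique extendability and ``no constant solution'', with reducibility entering \emph{only} later in the second moment. That is not justified. For a UE constraint (even a group constraint), the per-constraint satisfaction probability $p(\mu)$ for an assignment with spin profile $\mu$ can exceed $1/r$ whenever $\mu$ is concentrated on a coset of a proper subgroup; e.g.\ over $\ZZ_2\times\ZZ_2$ with $k=4$ and $b=(1,0)$, the profile $\mu$ uniform on $\{(0,0),(1,0)\}$ gives $p(\mu)=1/2>1/4$. So the claim that the balanced profile dominates the first moment ``up to eliminating the $r$ constant assignments'' is wrong: the full entropy--versus--$\log p(\mu)$ trade-off has to be controlled over \emph{all} profiles, and this optimisation is neither standard nor obviously true for general UE $\psi$ without the group structure in hand. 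A second, separate issue is that once you condition on the $2$-core the constraints are no longer independent, so computing $\mathbb{E}[Z]$ on $H^*$ requires a configuration-model argument rather than a product of marginal probabilities.

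The paper sidesteps both issues by using the group reduction for the upper bound as well. After identifying $\psi$ with an abelian-group equation $Bx=b$, it does not run a first moment at all: it takes the first $m'=(d_k/k-\eps_n)n$ rows $B'$, uses full row rank of $B'$ over $\FF_p$ (Lemma~\ref{indeprows}, which needs the linear/group structure) to bound the number of $2$-core solutions by $q^{O(\eps_n n)}$, and then shows that each of the $\Omega(\eps n)$ remaining rows supported on the $2$-core columns fails with probability $\Omega(1)$ for every fixed candidate---a union bound finishes. The ``no constant solution'' hypothesis is used only in this last step. The fix for your plan is immediate: prove the structural lemma first and use it for both directions, either via the paper's rank argument or via a (now legitimate) first-moment analysis for abelian-group equations.
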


Theorem~\ref{thm:sat} is a special case of a more general result (see Theorem~\ref{thm:sat2} below) which treats the broader model $H_n(\pi,k,m)$ and also covers the case $k=3$. To set the stage for this generalization, we introduce several definitions.

    For any $k$-ary UE constraint function $\psi$, let $f_\psi:\Omega^{k-1}\to \Omega$ be the unique function such that $\sigma f_\psi(\sigma)\models \psi$ for all $\sigma\in \Omega^{k-1}$. 

    \begin{definition}
        Let $\Psi\subseteq \Lambda_{k,r}$. Define a sequence of collections of functions $F_0(\Psi),F_1(\Psi),\ldots$ inductively by setting
        \begin{enumerate}[(i)]
            \item Let $F_0(\Psi)$ be the singleton set containing the identity function on $\Omega$. That is, $F_0(\Psi)$ contains a single function which maps $\sigma$ to $\sigma$ for every $\sigma\in \Omega$.
            \item For each $i\ge 1$, let $F_i(\Psi)$ be the set of functions $\Omega^{(k-1)^i}\mapsto \Omega$ of the form
            \begin{equation*}
                \sigma_1\sigma_{2}\ldots \sigma_{k-1}\mapsto f_{\psi}\bigl (f_1(\sigma_1)\ldots f_{k-1}(\sigma_{k-1})),\quad  \sigma_1,\ldots,\sigma_{k-1}\in \Omega^{(k-1)^{i-1}}
            \end{equation*}
            for some $\psi\in \Psi$ and $f_1,\ldots,f_{k-1}\in F_{i-1}(\Psi)$.
        \end{enumerate}
    \end{definition}

 We say that a $j$-ary function $f$ on a set $S$ is symmetric if $f(\sigma_1,\ldots,\sigma_j)=f(\sigma_{\pi(1)},\ldots,\sigma_{\pi(j)})$ for all $\pi\in \Pi([j])$ and $\sigma_1,\ldots,\sigma_j\in S$. 

 \begin{definition}\label{def:reducibility}
     Let $\Psi\subset \Lambda_{k,r}$. We say that $\Psi$ is reducible if, for all $d\in \{0,\ldots, k-1\}$, $\sigma,\sigma'\in\Omega^d$, $\eta,\eta'\in \Omega^{k-d}$, and $\psi,\psi'\in \Psi$ such that
     \begin{equation*}
         \sigma\eta\models\psi,\quad \sigma'\eta\models \psi,\quad \sigma\eta'\models \psi',
     \end{equation*}
     we also have $\sigma'\eta'\models \psi'$.
 \end{definition}

\begin{remark}\label{r:extension-exchange}
    We say two strings $\sigma,\sigma'$ of the same length have a common extension if there exist some string $\eta$ and $\psi\in\Psi$ such that $\sigma\eta\models \psi$ and $\sigma'\eta\models \psi$. An important implication of $\Psi$ being reducible is the following: if  $\sigma\eta'\models \psi'$ and $\sigma$ and $\sigma'$ have a common extension, then $\sigma'\eta'\models \psi'$. This ``exchangeability property'' will be useful in our proof presented later.
\end{remark}

We say that a set of constraints $\Psi\subset \Lambda_{k,r}$ has a constant solution if there is a $\sigma\in \Omega$ such that $\sigma^k\models \psi$ for all $\psi\in \Psi$. If $\supp(\pi)$ has a constant solution, then $H(\pi,k,m)$ is satisfiable for all $m$. We therefore only consider distributions $\pi$ such that $\supp(\pi)$ has no constant solutions.

\begin{theorem}\label{thm:sat2}
        Suppose that $\supp(\pi)\subseteq\Lambda_{k,r}$ satisfies one of the following two conditions.
\begin{itemize}
    \item $k\geq 4$ and $\supp(\pi)$ is reducible;
    \item $k=3$, and every function in $F_2(\supp(\pi))$ is symmetric.
\end{itemize}
Let $d_k$ be defined as in Theorem~\ref{thm:equation-sat} and let $\eps>0$. If $\supp(\pi)$ has no constant solutions, then
        \begin{equation*}
            \lim_{n\to\infty}\pr[H_n(\pi,k,m)\ \text{is satisfiable}]=\begin{cases}
                1& \text{if}\ m<(d_k/k-\eps)n\\
                0& \text{if}\ m>(d_k/k+\eps)n.
            \end{cases}
        \end{equation*}
    \end{theorem}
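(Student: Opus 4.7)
The plan is to mimic the 2-core based proof of the $k$-XORSAT satisfiability threshold (Dubois--Mandler, Pittel--Sorkin, Dietzfelbinger et al.), adapting each step to the UE-SAT setting via the reducibility or $F_2$-symmetry hypotheses. The first step, which uses only unique extendability, is a peeling reduction: a variable appearing in exactly one constraint can always be assigned to satisfy that constraint without affecting the others, so $H_n(\pi,k,m)$ is satisfiable if and only if its restriction to the 2-core of the underlying hypergraph is. The asymptotic analysis of the 2-core of a random $k$-uniform hypergraph supplies concentrated values $(n^*, m^*)$ for the numbers of core vertices and core constraints, with $d_k$ defined exactly so that $m^*/n^* \to 1$ at $d = d_k$: below threshold $m^* \le (1-\Omega(1))n^*$, above threshold $m^* \ge (1+\Omega(1))n^*$.

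For the upper bound ($m > (d_k/k + \eps)n$), a first-moment calculation on the 2-core suffices. A uniformly random assignment in $\Omega^{n^*}$ satisfies each $k$-ary UE constraint independently with probability exactly $1/r$, since each such constraint has $r^{k-1}$ satisfying $k$-tuples out of $r^k$. The expected number of satisfying assignments restricted to the core is therefore $r^{n^* - m^*} = r^{-\Omega(n)}$, and Markov's inequality gives a.a.s.\ unsatisfiability of the core and hence of $H_n(\pi,k,m)$. No structural hypothesis on $\supp(\pi)$ beyond unique extendability is needed for this direction.

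The lower bound, asserting a.a.s.\ satisfiability for $m < (d_k/k - \eps)n$, is the main technical task, and is where reducibility (resp.\ $F_2$-symmetry) enters. I would apply the second moment method, coupled with small-subgraph conditioning, to the number $Z$ of assignments on the 2-core satisfying all core constraints. The crux is to evaluate, for a random UE constraint on $k$ random variables, the joint probability that a pair of assignments $(\sigma,\tau)$ both satisfy it, and then to show that as a function of the coordinatewise overlap profile of $(\sigma,\tau)$ this probability forces the dominant contribution to $\mathbb{E}[Z^2]$ to come from the balanced overlap (where $\tau$ is uniformly random conditional on $\sigma$). Reducibility supplies exactly the structural input needed: by the exchangeability property of Remark~\ref{r:extension-exchange}, assignments sharing a common extension are interchangeable across every other constraint in $\supp(\pi)$, which collapses the joint satisfaction probability to the product-like form already analyzed for random linear equations over finite fields in~\cite{ayre2020satisfiability}. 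For $k=3$ the shallower propagation depth afforded by reducibility is insufficient, and the stronger hypothesis that every function in $F_2(\supp(\pi))$ is symmetric should play the analogous role, symmetrizing the depth-two compositions that govern the $k=3$ overlap calculation.

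The main obstacle is precisely this second moment computation: one must rule out structured, non-balanced overlaps from dominating $\mathbb{E}[Z^2]$, and the reducibility or $F_2$-symmetry hypothesis is the exact structural assumption that eliminates such maxima in the Laplace analysis of the overlap sum. Once $\mathbb{E}[Z^2] = O(\mathbb{E}[Z]^2)$ on the 2-core is established, the Paley--Zygmund inequality gives $\pr[Z > 0] = \Omega(1)$, and small-subgraph conditioning handles the constant-order contributions from short cycles in the constraint hypergraph, upgrading this to a.a.s.\ satisfiability. A secondary technical point is to track how the different constraint functions in $\supp(\pi)$ interact when one picks out the dominant term of the overlap sum, but reducibility ensures these interactions factor cleanly across constraints, so the analysis should mirror that of the homogeneous case $\pi = \delta_\psi$ treated by Theorem~\ref{thm:sat}.
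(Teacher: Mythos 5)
Your plan diverges fundamentally from the paper's, and both directions of your argument have gaps. The upper bound is the clearest problem: you assert that for the 2-core, ``the expected number of satisfying assignments restricted to the core is $r^{n^*-m^*}$,'' but this first-moment identity is false for general $\pi$. Once the constraint variables are fixed (as they are after conditioning on the 2-core structure), the probability that a \emph{fixed} assignment $x$ satisfies a random constraint $\psi\sim\pi$ is $\sum_{\psi}\pi(\psi)\ind{x|_{\partial j}\models\psi}$, which depends on $x$ and need not equal $1/r$; it equals $1/r$ only when $x$ is uniformly random over $\Omega^k$ at the constraint, which is not the quantity the first moment requires. In the extreme case of an atomic $\pi=\delta_{\psi}$ (explicitly permitted by the theorem), there is no randomness left in the constraint functions at all, the ``expectation'' is the deterministic count of core solutions, and $r^{n^*-m^*}$ is simply wrong — e.g.\ for $\psi_{\ZZ_2,0}$ the count is either $0$ or a power of $2$ governed by the rank of the constraint matrix, not by $n^*-m^*$. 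This is exactly why the paper's proof of Theorem~\ref{thm:sat3} does not do a first-moment bound: Claim~\ref{claim:top-system} is an exact algebraic count of core solutions (an affine subspace of dimension $|C_n|-|R_n|=O(\eps_n n)$ over $\ZZ_q$), and Claim~\ref{claim:bottom-system} then kills off those few candidates using the absence of a constant solution. Your statement that ``no structural hypothesis on $\supp(\pi)$ beyond unique extendability is needed'' for the upper bound is also misleading, since the linear structure used in Claim~\ref{claim:top-system} comes precisely from the reducibility/$F_2$-symmetry hypothesis.

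The deeper missing idea is that reducibility (for $k\ge4$) or $F_2$-symmetry (for $k=3$) is used \emph{algebraically}, not analytically. The paper first proves Theorems~\ref{symthm} and~\ref{cor:reducible}, which establish that every $\psi\in\supp(\pi)$ is equal to a group constraint $\psi_{(\Omega,+),b_\psi}$ on a single abelian group $(\Omega,+)$. Once this is known, the entire instance $H_n(\pi,k,m)$ \emph{is} a random linear system $Bx=b$ over an abelian group, and Theorem~\ref{thm:sat2} reduces to Theorem~\ref{thm:sat3}, whose subcritical direction invokes the full-row-rank result of Ayre et al.\ over $\FF_p$ and lifts it to $\ZZ_q$ via a determinant/invertibility argument. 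Your proposal instead tries to use reducibility to ``collapse the joint satisfaction probability'' inside a bespoke second-moment calculation, but this inverts the logic: the exchangeability of Remark~\ref{r:extension-exchange} is not a tool for taming overlap profiles in a moment computation, it is one step in the proof that the constraint is secretly linear. Without that structural conversion, a second-moment plus small-subgraph conditioning argument carried out directly on arbitrary UE constraints with atomic $\pi$ would face the same fixed-right-hand-side difficulties that are nontrivial even in the XORSAT literature, and you give no concrete way around them. You also cite Theorem~\ref{thm:sat} as a template for the homogeneous case, which is circular, since in the paper Theorem~\ref{thm:sat} is a corollary of Theorem~\ref{thm:sat2}.
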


\begin{remark}
Coja-Oghlan et al.~\cite{coja2020replica} studied and determined the RSB threshold (also called the condensation threshold) for a general family of random CSPs that satisfy some assumptions \textbf{SYM}, \textbf{BAL}, \textbf{MIN}, \textbf{POS}, and \textbf{UNI}. The RSB threshold is in general a lower bound on the satisfiability threshold, and for random linear equations over finite fields, these two thresholds coincide, which we conjecture to hold for $H_n(\pi,k,m)$ as well; see Conjecture~\ref{conj:RSB} below. However, the results in~\cite{coja2020replica} do not apply to our model because the assumptions \textbf{BAL}, \textbf{MIN}, and \textbf{POS} do not hold for all choices of distribution $\pi$; in particular, it does not hold when $\pi$ is supported on a single $\psi$.    
\end{remark}

Next we consider the solution space of $H_n(\pi,k,m)$ for distributions $\pi$ such that $\supp(\pi)\subset \Lambda_{k,r}$. 
A sequence of works (we refer the readers to the literature reviews in~\cite{krzakala2007gibbs,dingslysun2022kSAT,achlioptas2015solution}) has revealed phenomena that appear to be ubiquitous across various random CSPs. As mentioned earlier, when the number of constraints $m$ increases, the random CSP undergoes several phase transitions, commonly known as the ``clustering'', ``freezing'', and ``RSB'' (also called the ``condensation''), before ultimately reaching the  ``satisfiability'' threshold. These phase transitions demonstrate the sudden change of the geometric relations between the solutions, and the sudden change of the correlation between two solutions sampled independently and uniformly at random from the solution space. These phase transitions help explain the failure of certain classes of algorithms below specific density thresholds -- typically around the ``clustering'' threshold -- and have motivated the development of more sophisticated methods, such as Survey Propagation. Insights from the geometry of the solution space have, in some cases, even guided the determination of the satisfiability threshold (e.g.\,~\cite{dingslysun2022kSAT,ayre2020satisfiability}). For random $k$-XORSAT, the clustering and freezing thresholds coincide~\cite{achlioptas2015solution}, as do the condensation and satisfiability thresholds~\cite{ayre2020satisfiability}. We generalize these observations to the solution space of $H_n(\pi,k,m)$. Before formally defining the clustering threshold, we introduce a few additional definitions.

Let $\sigma,\tau$ be two solutions of $H_n(\pi,k,m)$. We say $\sigma$ and $\tau$ are $\alpha$-connected if there exists a sequence of solutions $\sigma_0,\sigma_1,\ldots, \sigma_{\ell}$ of $H_n(\pi,k,m)$ such that $\sigma_0=\sigma$, $\sigma_{\ell}=\tau$ and $|\sigma_{i}\Delta \sigma_{i+1}|\le \alpha$ for every $0\le i<\ell$. Let $S$ and $S'$ be two disjoint subsets of solutions of $H_n(\pi,k,m)$. We say that $S$ and $S'$ are $\beta$-separated if $|\sigma\Delta \sigma'|\ge \beta$ for every $\sigma\in S$ and $\sigma'\in S'$.

Recalling $\rho_{k,d}$ from Theorem~\ref{thm:equation-sat},
define $d_k^\star$ by
\begin{equation*}
    d_k^\star=\inf \{d>0: \rho_{k,d}>0\}.
\end{equation*}

Given a distribution $\pi$ over $\Lambda_{k,r}$, let 
$$Q_{\pi}=\{\sigma\in \Omega: \sigma^k\models \psi, \forall \psi\in \supp(\pi)\}.
$$

\begin{theorem}\label{thm:clustering} (Clustering threshold) Suppose that $\supp(\pi)\subseteq\Lambda_{k,r}$ satisfies one of the following two conditions.
\begin{itemize}
    \item $k\geq 4$ and $\supp(\pi)$ is reducible;
    \item $k=3$, and every function in $F_2(\supp(\pi))$ is symmetric.
\end{itemize}
  Let $\eps>0$.
 \begin{enumerate}[(a)]
        \item \label{thm:clustering-a}If $m<(d_k^\star/k-\eps)n$, then a.a.s.\ all solutions in $H_n(\pi,k,m)$ are pairwise $O(\log n)$-connected. 
        \item \label{thm:clustering-b}If $(d_k^\star/k+\eps)n<m<(d_k/k-\eps)n$, then with $d=km/n$, a.a.s.\ the solutions of $H_n(\pi,k,m)$ are partitioned into clusters $C_1,\ldots,C_{N}$ for some integer $N$ such that
        \begin{enumerate}[{(i)}]
        \item \label{thm:clustering-b-i}$n^{-1}\log N \to (\rho_{k,d}-d\rho_{k,d}^{k-1}+(d-d/k)\rho_{k,d}^k)\log r$;
            \item \label{thm:clustering-b-ii}$|C_i|=|C_j|$ for every $1\le i<j\le N$, and for every $1\le i\le N$:
            $$n^{-1}\log |C_i| \to (1-d/k-\rho_{k,d}+d\rho_{k,d}^{k-1}-(d-d/k)\rho_{k,d}^k)\log r;$$
            \item \label{thm:clustering-b-iii}Solutions in the same cluster $C_i$ are pairwise $O(\log n)$-connected;
            \item \label{thm:clustering-b-iv} Every pair of distinct clusters are $\Omega(n)$-separated.
        \end{enumerate}
         \item\label{thm:homogeneousclustering} If $m>(d_k/k+\eps)n$ and $Q_{\pi}\neq\emptyset$, then a.a.s.\ the solutions of $H_n(\pi,k,m)$ are partitioned into $|Q_{\pi}|$ clusters that satisfy properties (ii)--(iv) in part (b). 
        \end{enumerate}
\end{theorem}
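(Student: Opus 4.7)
The plan is to transfer the standard $k$-XORSAT clustering picture to $H_n(\pi,k,m)$ by analyzing the 2-core of the underlying random $k$-uniform hypergraph together with the unique extendability and the exchangeability of the constraints. Let $H$ be the $k$-uniform random hypergraph on $V_n$ whose hyperedges are the constraint variable sets, and let $H^\ast$ be its 2-core. Standard results on the 2-core of random hypergraphs give: if $d := km/n < d_k^\star$ then a.a.s.\ $H^\ast$ is empty; if $d > d_k^\star$ then a.a.s.\ $|V(H^\ast)| = (\rho_{k,d}+o(1))n$ and $|E(H^\ast)| = ((d/k)\rho_{k,d}^k+o(1))n$, and the non-core part decomposes into peeling subtrees of height $O(\log n)$. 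All cluster information will be extracted from $H^\ast$ and these peeling trees.

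For part (a), when $d < d_k^\star$, fix a peeling order $v_{i_1},\ldots,v_{i_n}$ with $v_{i_1}$ peeled first. Given two solutions $\sigma,\tau$, process coordinates from $v_{i_n}$ down to $v_{i_1}$, maintaining a running solution that agrees with $\tau$ on an increasing suffix. To fix position $v_{i_\ell}$, use that at peeling time $\ell$ the variable $v_{i_\ell}$ lies in at most one surviving constraint, all of whose remaining variables have peeling indices larger than $\ell$. Unique extendability together with the exchangeability property of Remark~\ref{r:extension-exchange} in the reducible case, or the $F_2(\supp(\pi))$-symmetry for $k=3$, allows us to realize the required single-coordinate change by updating values only within the peeling subtree rooted at $v_{i_\ell}$; a.a.s.\ this subtree has size $O(\log n)$, so each move has Hamming distance $O(\log n)$, and iterating over $\ell$ produces the desired connecting path.

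For part (b), declare two solutions equivalent if they agree on $V(H^\ast)$, and take the clusters to be these equivalence classes. The number of classes equals the number of valid assignments to $V(H^\ast)$ satisfying all core constraints; a first-moment computation on $H^\ast$—using that each UE constraint has exactly $r^{k-1}$ satisfying $k$-tuples—recovers the limit in item~(i). Given any valid core assignment, the completions to a full solution are counted by propagating through the peeling forest, and the reducibility / $F_2$-symmetry guarantees that this count is the same for every valid core assignment, yielding the uniform cluster size in item~(ii). Within-cluster $O(\log n)$-connectivity (item~(iii)) follows by the peeling argument of part (a) restricted to $V_n \setminus V(H^\ast)$. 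Between-cluster $\Omega(n)$-separation (item~(iv)) is a rigidity statement on $H^\ast$: any two distinct valid core assignments must differ on $\Omega(n)$ coordinates, which follows from the edge-expansion of a random 2-core once UE is invoked to rule out local repairs. For part (c), with $m>(d_k/k+\eps)n$ and $Q_\pi\neq\emptyset$, the $|Q_\pi|$ constant solutions are present deterministically; a weighted first-moment argument over assignments bounded away from every constant solution — the non-constant-sector analogue of the unsatisfiability half of Theorem~\ref{thm:sat2} — shows that a.a.s.\ no such assignment is simultaneously satisfying, so every solution lies within Hamming distance $o(n)$ of some $\sigma^n$ with $\sigma\in Q_\pi$, producing exactly $|Q_\pi|$ clusters; the internal properties (ii)–(iv) are inherited from the part~(b) analysis applied to each constant-solution basin.

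The main technical obstacle is the absence of linear algebra. For $k$-XORSAT, the cluster count, the uniform cluster size, and the rigidity statement all follow from standard rank arguments. In the present setting these must be rebuilt combinatorially from the exchangeability property in the reducible case and from the $F_2(\supp(\pi))$-symmetry when $k=3$. Constructing explicit bijections between completions of different core assignments—particularly when $\supp(\pi)$ contains multiple distinct UE constraint functions, so that different edges of $H^\ast$ carry different functions $f_\psi$—is the most delicate step, and controlling how these bijections interact along the non-core peeling trees is what drives the cluster-size uniformity. Once this combinatorial bridge is in place, the moment calculations for the cluster counts and the expansion-based separation arguments proceed along lines parallel to the $k$-XORSAT literature.
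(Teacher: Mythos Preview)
Your proposal misses the single most important structural fact that the paper exploits: under either hypothesis of the theorem, Theorems~\ref{symthm} and~\ref{cor:reducible} already guarantee that there is an abelian group $(\Omega,+)$ and elements $\{b_\psi\}_{\psi\in\supp(\pi)}$ with $\psi=\psi_{(\Omega,+),b_\psi}$ for every $\psi\in\supp(\pi)$. In other words, $H_n(\pi,k,m)$ \emph{is} a system $Bx=b$ of equations over a finite abelian group, and by the fundamental theorem of finite abelian groups one may reduce to cyclic $\ZZ_q$ with $q$ a prime power. Linear algebra is therefore fully available, and the paper uses it: for (b.i) and (b.ii) the count of 2-core solutions is obtained exactly as $q^{n^*-m^*}$ via Lemma~\ref{indeprows} (rows of $B_{\text{2-core}}$ are linearly independent over $\FF_p$, hence an $m^*\times m^*$ submatrix is invertible over $\ZZ_q$); the equal-size claim is the coset statement that clusters are translates by solutions of $Bz=0$ vanishing on non-flippable 2-core variables; and the general-$r$ case is obtained by taking products over the cyclic factors. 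Your proposed ``combinatorial rebuild'' is simply not needed, and your concrete substitute for the count in (b.i)---``a first-moment computation on $H^\ast$''---is a genuine gap: the first moment only gives $\mathbb{E}[\#\text{2-core solutions}]=r^{n^*-m^*}$, not that the actual number is $r^{(1+o(1))(n^*-m^*)}$ a.a.s. You would need a second-moment or concentration argument that you do not supply; the paper sidesteps this entirely with the rank argument.

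Two further points. First, the paper's clusters are \emph{cycle-equivalence} classes (agreement on all 2-core variables not lying in a flippable cycle), not agreement on all of $V(H^\ast)$ as you define; the discrepancy is controlled by Lemma~\ref{lem:cyclebound} (flippable-cycle vertices are $O(1)$ in expectation), and this correction is what makes (b.iii) hold. Second, your one-line plan for part~(c) (``a weighted first-moment argument over assignments bounded away from every constant solution'') does not capture the paper's actual mechanism: the paper proves (c) by an \emph{induction on density} (Claim~\ref{claim:2coreinduction}), stepping from $d_k$ upward in increments governed by the separation constant $\delta$ of Lemma~\ref{lem:deltaseparated}, at each step re-running the Claim~\ref{claim:top-system}/Claim~\ref{claim:bottom-system}-style union bound from the proof of Theorem~\ref{thm:sat3} but now only against \emph{non-trivial} 2-core assignments (Claim~\ref{claim:bottom}), and then comparing the 2-cores of $B$ and of a sub-density $B'$ to upgrade cycle-equivalence from $B'_{\text{2-core}}$ to $B_{\text{2-core}}$. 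The parts you handle essentially correctly are (a), (b.iii), and (b.iv), which the paper indeed cites directly from \cite{achlioptas2015solution}, noting that those proofs use only unique extendability---so your invocation of reducibility/$F_2$-symmetry there is unnecessary.
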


\begin{remark}
    Note that if $Q_{\pi}\neq \emptyset$ then $\pi$ is supported on a single $\psi$ by the reducibility of $\supp(\pi)$ if $k\ge 4$ or the symmetry of $F_2(\supp(\pi))$ if $k=3$, which is assumed by the hypothesis of the theorem. We leave this observation as a simple exercise. Our proof of Theorem~\ref{thm:clustering} does not rely on this fact.
\end{remark}

We expect that the phase transition of RSB occurs when $m/n$ is around $d_k/k$ if $\supp(\pi)$ satisfies the same conditions as in Theorem~\ref{thm:clustering}. Indeed, it follows easily from our results in this paper (see Remark~\ref{r:RSB} below) when $r$ is the product of distinct prime numbers. However, for general $r$ we did not manage to prove that the replica symmetry holds until reaching the satisfiability threshold, and thus we leave it as a conjecture; see Conjecture~\ref{conj:RSB}.

    

\remove{


Given a fixed spin set $\Omega$, and integers $k,m\geq 1$, and a constraint function $\psi$ on $\Omega$, we choose a random CSP instance $H_n(\psi,k,m)$ as follows:
    \begin{enumerate}[(i)]
    \item The set of variables is $V_n:=\{v_1,\ldots,v_n\}$;
        \item For each number $1\le j\le m$, independently choose a uniformly random $k$-tuple of distinct variables $x_1,\ldots,x_k\in \{v_1,\ldots,v_n\}$ and let $\partial_j=(x_1,\ldots,x_k)$.
        Let 
        $$H_n(\psi,k,m)=(V_n,[m], \{\psi_a\}_{a\in[m]},\{(\partial_{j,1},\ldots, \partial_{j,k})\}_{j\in[ m]}),$$ were $\psi_a=\psi$ for all $a\in [m]$.
    \end{enumerate}

}


\remove{

    Our focus will be on constraint functions that are commutative and uniquely extendable.

    \begin{definition}
        A $k$-ary constraint $\psi$ on $\Omega$ is uniquely extendable (UE) if, for all $j\in [k]$, $\sigma_1\in \Omega^{j-1}$, and $\sigma_2\in \Omega^{k-j}$, there is a unique $\tau\in \Omega$ such that $\sigma_1\tau\sigma_2\models \psi$.
    \end{definition}
}

\subsection{An overview of the proofs}

Let $(G,+)$ be an abelian group of order $r\ge 2$. Consider $\Lambda'\subseteq \Lambda_{k,r}$ defined by $\Lambda'=\{\psi_{G,b}\}_{b\in G}$ where
\begin{equation}
        \sigma\models \psi_{G,b}\iff\sum_{i=1}^k \sigma_i=b,\quad \sigma=(\sigma_1,\ldots, \sigma_k)\in G^k. \label{def:psi-b}
\end{equation}
Ayre et al.\,~\cite{ayre2020satisfiability} proved that Theorem~\ref{thm:sat2} holds if $\pi$ is the uniform distribution over $\Lambda'$. Our first step is to show that Theorem~\ref{thm:sat2} holds for $H_n(\pi,k,m)$ for an arbitrary distribution on $\Lambda'$, provided that $\supp(\pi)$ does not admit a constant solution. In particular, the theorem holds for $H_n(\psi_{G,b},k,m)$ for every $b\in G$ whenever $\psi_{G,b}$ does not have a constant solution. 

\begin{theorem}\label{thm:sat3}
    Let $(G,+)$ be an abelian group of order $r\ge 2$ and $b\in G$. Let $\psi_{G,b}$ be defined as in~\eqref{def:psi-b}. Let $\pi$ be a distribution over $\{\psi_{G,b}\}_{b\in G}$ where $\supp(\pi)$ admits no constant solutions. Suppose that $k\ge 3$ and $d_k$ is defined as in Theorem~\ref{thm:equation-sat}. 
        Then for any $\eps>0$,
        \begin{equation*}
            \lim_{n\to\infty}\pr[H_n(\pi,k,m)\ \text{is satisfiable}]=\begin{cases}
                1& \text{if}\ m<(d_k/k-\eps)n\\
                0& \text{if}\ m>(d_k/k+\eps)n.
            \end{cases}
        \end{equation*}
\end{theorem}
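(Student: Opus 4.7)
The plan is to reduce the satisfiability question for $H_n(\pi,k,m)$ to a rank question on the 2-core of the underlying random hypergraph, and then to extend Ayre et al.'s analysis from uniform $\pi$ to arbitrary distributions on $\{\psi_{G,b}\}_{b\in G}$. Since each $\psi_{G,b}$ is uniquely extendable, I would first apply the standard peeling procedure: every variable of degree one is assigned the unique value satisfying its sole constraint, and iterating reduces the question to the 2-core $H^*$ with variable set $V^*$, constraint set $C^*$, and $0/1$ incidence matrix $A^*\in \ZZ^{|C^*|\times |V^*|}$ acting as a homomorphism $A^*:G^{|V^*|}\to G^{|C^*|}$. Because the random $k$-tuples and the random right-hand sides are independent in $H_n(\pi,k,m)$, the right-hand-side vector $b^*\in G^{|C^*|}$ on the 2-core has i.i.d.\ components from $\pi$ conditional on $H^*$, and the instance is satisfiable iff $b^*\in\mathrm{Im}(A^*)$.

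For $m<(d_k/k-\eps)n$, classical 2-core structure results combined with the sparse-matrix rank analyses developed in the $k$-XORSAT literature imply that, a.a.s., $A^*\bmod p$ has full row rank over $\FF_p$ for every prime $p\mid |G|$. Decomposing $G$ via its Sylow subgroups and lifting by Nakayama's lemma promotes this to surjectivity of $A^*$ over $G$, so $\mathrm{Im}(A^*)=G^{|C^*|}$ and the instance is satisfiable for any $\pi$.

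For $m>(d_k/k+\eps)n$, the codimension of $\mathrm{Im}(A^*)$ in $G^{|C^*|}$ is linear in $n$. I would analyze $\pr[b^*\in\mathrm{Im}(A^*)]$ via Pontryagin duality, writing it as an average of $\prod_{e}\hat\pi(\chi_e)$ over characters $\chi\in\mathrm{Im}(A^*)^\perp\subseteq\widehat{G^{|C^*|}}$. Let $K_\pi:=\{\psi\in\hat G:|\hat\pi(\psi)|=1\}$, which is a subgroup of $\hat G$; for any $\chi\in \mathrm{Im}(A^*)^\perp\setminus K_\pi^{|C^*|}$ the product has modulus strictly less than $1$, and standard minimum-distance-type estimates for the random row-null-code show that the contribution from such $\chi$'s decays. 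When $K_\pi$ is a proper subgroup of $\hat G$ (which is the case whenever $\pi$ is not concentrated on a single point), this handles all but an exponentially-negligible fraction of characters, and a careful analysis of the residual sum over $\mathrm{Im}(A^*)^\perp\cap K_\pi^{|C^*|}$ yields the claimed decay.

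The main obstacle is the atomic case, in which $\pi$ is concentrated on some $b_0\in G\setminus kG$ (with $kG=\{kc:c\in G\}$), so that $K_\pi=\hat G$: then every $|\hat\pi(\chi_e)|$ equals $1$, the Fourier estimate is useless, and whether $(b_0,\dots,b_0)$ lies in $\mathrm{Im}(A^*)$ becomes a deterministic property of $H^*$. Here I would construct, a.a.s., an explicit row-dependency witness: configuration-model enumerations show that the 2-core above $d_k$ contains linearly many short local configurations (for example, two short cycles sharing a variable, or a theta-like subgraph) whose associated integer row-null vectors $\alpha$ for $A^*$ produce a linearly large diversity of aggregates $\sum_e\alpha_e\in\ZZ$. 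Using $b_0\notin kG$, together with the Sylow decomposition of $G$, one can then select an $\alpha$ with $(\sum_e\alpha_e)\,b_0\neq 0$ in $G$, certifying $(b_0,\dots,b_0)\notin\mathrm{Im}(A^*)$. Making this witness construction uniform over the group structure of $G$---particularly for $G$ with mixed prime factors or higher $p$-ranks---is the technical heart of the argument.
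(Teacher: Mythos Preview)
Your subcritical argument is fine and matches the paper. The supercritical argument, however, has a genuine gap in the atomic case, and even the non-atomic Fourier sketch leans on estimates that are not standard.

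The concrete error is in your witness construction. You propose to find ``integer row-null vectors $\alpha$ for $A^*$'' with a ``linearly large diversity of aggregates $\sum_e\alpha_e\in\ZZ$''. But summing the identity $\alpha^T A^*=0$ over all columns, and using that every row of $A^*$ has exactly $k$ ones, gives $k\sum_e\alpha_e=0$ in $\ZZ$, so $\sum_e\alpha_e=0$ for \emph{every} integer row-null vector. There is no diversity at all over $\ZZ$. If instead you work over $\ZZ/q\ZZ$ you only get $\sum_e\alpha_e\equiv 0\pmod{q/\gcd(k,q)}$, so to certify $(b_0,\dots,b_0)\notin\mathrm{Im}(A^*)$ you must produce a left-null vector whose weight sum hits a specific nonzero residue class; by duality this is exactly equivalent to the statement you are trying to prove, so the ``short local configurations'' heuristic does not close the loop. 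The non-atomic case is also only sketched: bounding $\sum_{\chi\in\mathrm{Im}(A^*)^\perp}\prod_e\hat\pi(\chi_e)$ requires control of the weight enumerator of a random code of linear dimension, which is not a standard off-the-shelf estimate.

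The paper sidesteps all of this with an elementary first-moment argument that works uniformly for every $\pi$, atomic or not. Split the $m$ constraints as $m=m'+m''$ with $m'=(d_k/k-\eps_n)n$ just below threshold. On the first $m'$ rows the $2$-core is nearly square, so it has only $q^{O(\eps_n n)}$ solutions. A constant fraction of the remaining $m''=\Theta(\eps n)$ rows are supported inside the $2$-core columns $C_n$; conditionally these rows are i.i.d.\ uniform $k$-subsets of $C_n$ with constraint type drawn from $\pi$. For any fixed candidate solution $x$, pigeonhole gives a spin $r$ with $|\{v\in C_n:x_v=r\}|\ge|C_n|/q$, and ``no constant solution'' furnishes some $\phi\in\supp(\pi)$ with $r^k\not\models\phi$; hence each extra row rejects $x$ with probability at least some fixed $\delta>0$. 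A union bound over the $q^{O(\eps_n n)}$ candidates against $(1-\delta)^{\Omega(\eps n)}$ finishes the proof. No character sums, no dual-code structure, no case split on atomic~$\pi$.
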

Next, we identify UE constraint functions that are equivalent to functions in~\eqref{def:psi-b} for some abelian group $(G,+)$. 

    We say that a $k$-ary UE constraint function $\psi$ on $\Omega$ is a equivalent to a \emph{group constraint function} if there exists a (not necessarily abelian) group $(\Omega,+)$ such that $\psi=\psi_{(\Omega,+),b}$ for some $b\in \Omega$.

Now Theorems~\ref{thm:sat} and~\ref{thm:sat2} follow from Theorem~\ref{thm:sat3} and the following two theorems that characterise UE commutative constraint functions that are equivalent to group constraint functions.

    \begin{theorem}\label{symthm}
    Suppose $k\geq 3$ and suppose $\Psi\subset \Lambda_{k,r}$ is such that every $\psi\in \Psi$ is commutative. Then the following are equivalent.
    \begin{enumerate}[(a)]
        \item \label{symthm1}There is a group $(\Omega,+)$ on the spin set $\Omega$ and a collection of elements $\{b_{\psi}\}_{\psi\in \Psi}\subset  \Omega$ such that $\psi= \psi_{(\Omega,+),b_\psi}$ for all $\psi\in \Psi$.
        \item \label{symthm2}Every function in $F_2(\Psi)$ is symmetric.
    \end{enumerate} 
\end{theorem}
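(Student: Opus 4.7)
The implication $(a) \Rightarrow (b)$ is a direct computation: if $\psi = \psi_{(\Omega, +), b_\psi}$ for an abelian group $(\Omega, +)$, then $f_\psi(\sigma_1, \ldots, \sigma_{k-1}) = b_\psi - \sigma_1 - \cdots - \sigma_{k-1}$, so the depth-$2$ composition $f_\psi(f_{\psi_1}(\sigma_1), \ldots, f_{\psi_{k-1}}(\sigma_{k-1}))$ unfolds to a constant plus $\sum_{i,j}\sigma_{i,j}$, manifestly symmetric in its $(k-1)^2$ entries. The substantive direction is $(b) \Rightarrow (a)$. My plan is to proceed in two stages: first extract a single abelian group structure $(\Omega, +)$ together with an element $b_{\psi_0}$ such that $\psi_0 = \psi_{(\Omega, +), b_{\psi_0}}$ for a chosen $\psi_0 \in \Psi$; then use mixed $F_2$ conditions to place every other $\psi \in \Psi$ into that same group.

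For the first stage in the case $k = 3$, commutativity of $\psi_0$ makes $f_{\psi_0}$ a commutative quasigroup, and the $F_2(\{\psi_0\})$ symmetry under swapping entries across rows of the input $2 \times 2$ matrix translates precisely to the medial identity $f_{\psi_0}(f_{\psi_0}(a_1, b_1), f_{\psi_0}(a_2, b_2)) = f_{\psi_0}(f_{\psi_0}(a_1, a_2), f_{\psi_0}(b_1, b_2))$. Toyoda's theorem on commutative medial quasigroups then produces an abelian group $(\Omega, +)$, an automorphism $\alpha$, and a constant $c$ with $f_{\psi_0}(a, b) = \alpha(a) + \alpha(b) + c$. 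The rotation identity $f_{\psi_0}(a, b) = z \iff f_{\psi_0}(b, z) = a$, a direct consequence of commutativity of $\psi_0$ applied to the output slot, substituted into this affine form forces $\alpha^2 = \mathrm{id}$, $\alpha(c) + c = 0$, and ultimately $\alpha = -\mathrm{id}$, giving $\psi_0 = \psi_{(\Omega, +), c}$. For $k \geq 4$, I would reduce to the binary case via the partial evaluations $g_v(a, b) := f_{\psi_0}(a, b, v_3, \ldots, v_{k-1})$ for $v \in \Omega^{k-3}$: each $g_v$ is commutative and UE, and mediality of $g_v$ follows by applying $F_2(\{\psi_0\})$ to matrices whose entries in rows $3, \ldots, k-1$ are chosen---using UE to produce preimages---so that the inner $f_{\psi_0}$-evaluations collapse to the fixed constants $v_3, \ldots, v_{k-1}$. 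The binary argument gives $g_v(a, b) = c_v - a - b$ in some abelian group $(\Omega, +_v)$; a further round of $F_2(\{\psi_0\})$ identities, now with rows corresponding to different $v$'s, synchronizes the $(\Omega, +_v)$ to a common group $(\Omega, +)$, after which the symmetry of $f_{\psi_0}$ in its $k - 1$ coordinates forces $f_{\psi_0}(x_1, \ldots, x_{k-1}) = b_{\psi_0} - x_1 - \cdots - x_{k-1}$.

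For the second stage, given $(\Omega, +)$ and $b_{\psi_0}$, I consider an arbitrary $\psi \in \Psi$ and the mixed composition $f_{\psi_0}(f_\psi(\sigma_1), f_{\psi_0}(\sigma_2), \ldots, f_{\psi_0}(\sigma_{k-1}))$. Substituting $f_{\psi_0} = b_{\psi_0} - \sum$, equating the function with its value under swapping a single entry between $\sigma_1$ and $\sigma_2$, and simplifying yields the identity $f_\psi(a', x_2, \ldots, x_{k-1}) + a' = f_\psi(a, x_2, \ldots, x_{k-1}) + a$ in $(\Omega, +)$. Hence $f_\psi$ is affine with coefficient $-1$ in its first coordinate; the symmetry of $f_\psi$ across coordinates then gives $f_\psi(x_1, \ldots, x_{k-1}) = b_\psi - x_1 - \cdots - x_{k-1}$ in the same group. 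The main obstacle is the coherence step for $k \geq 4$ in the first stage: verifying that the different abelian group structures $(\Omega, +_v)$ emerging from Toyoda applied to distinct partial evaluations $g_v$ all coincide. The plan for this is to exhibit $F_2(\{\psi_0\})$ identities in which two different $g_v, g_{v'}$ appear simultaneously (using UE to freely prescribe the inner $f_{\psi_0}$-values), after which matching the coefficients of the free variables in the resulting affine identities forces $+_v = +_{v'}$; executing this bookkeeping cleanly is the most intricate part of the argument.
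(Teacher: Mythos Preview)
Your approach is viable but genuinely different from the paper's. For $(b)\Rightarrow(a)$ the paper does not invoke any quasigroup structure theorem: it builds the group in one stroke as the quotient of the free abelian group $\mathbb{Z}^{(\Omega)}$ by the subgroup generated by $\{\sigma_1^\oplus-\sigma_2^\oplus:\sigma_1,\sigma_2\models\psi,\ \psi\in\Psi\}\cup\{\alpha\}$, and the work goes into showing that the natural map $\Omega\to G$ is a bijection. Injectivity reduces to a combinatorial lemma (Lemma~\ref{lem:solndifference}) that two tuples of solutions can never have multiset symmetric difference of size exactly two, proved by first propagating symmetry from $F_2(\Psi)$ to every $F_i(\Psi)$. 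This construction treats all $\psi\in\Psi$ and all $k\ge 3$ uniformly, with no case split and no coherence bookkeeping. Your Toyoda-based route is precisely the direction the paper sets aside: its Appendix sketches a quasigroup-theoretic proof (via D\"ornte's theorem rather than Toyoda's) only for the singleton case $\Psi=\{\psi\}$, and the authors remark that they did not pursue the extension to general $\Psi$. For what it is worth, the coherence step you flag as the main obstacle does go through more easily than you fear: expanding the mixed identity $g_w(g_v(a_1,b_1),g_w(a_2,b_2))=g_w(g_v(a_1,a_2),g_w(b_1,b_2))$ inside the single group $(\Omega,+_w)$ already forces every $g_v$ to be of the form $k_v-_w a-_w b$ in that one group, so no genuine matching of distinct group laws is needed; symmetry of $f_{\psi_0}$ then gives $k_v=K-\sum_j v_j$, and your Stage~2 works as written. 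One small omission: in $(a)\Rightarrow(b)$ you assume the group is abelian, but part~(a) only posits a group; as the paper observes, commutativity of any single $\psi\in\Psi$ already forces $(\Omega,+)$ to be abelian, and you should include that one-line argument.
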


    \begin{theorem}\label{cor:reducible}
    Suppose $k\geq 4$ and suppose $\Psi\subset \Lambda_{k,r}$ is such that every $\psi\in \Psi$ is commutative. Then the following are equivalent.
    \begin{enumerate}[(a)]
        \item There is a group $(\Omega,+)$ on the spin set $\Omega$ and a collection of elements $\{b_{\psi}\}_{\psi\in \Psi}\subset  \Omega$ such that $\psi= \psi_{(\Omega,+),b_\psi}$ for all $\psi\in \Psi$.
        \item The set of constraints $\Psi$ is reducible.
    \end{enumerate} 
    \end{theorem}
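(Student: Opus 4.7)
My plan is to derive Theorem~\ref{cor:reducible} from Theorem~\ref{symthm}, which already identifies (a) with the symmetry of every function in $F_2(\Psi)$ whenever $\Psi$ is commutative and $k\geq 3$. Under the hypotheses of Theorem~\ref{cor:reducible} ($k\geq 4$ and commutativity), the task thus reduces to establishing the equivalence between reducibility of $\Psi$ and $F_2(\Psi)$-symmetry.

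The direction (a)$\Rightarrow$(b) is immediate: if each $\psi\in\Psi$ equals $\psi_{(\Omega,+),b_\psi}$, then $\sigma\eta\models\psi$ reads $\sum_i\sigma_i+\sum_j\eta_j=b_\psi$; from $\sigma\eta,\sigma'\eta\models\psi$ one obtains $\sum\sigma=\sum\sigma'$, so $\sigma\eta'\models\psi'$ immediately yields $\sigma'\eta'\models\psi'$. This direction needs neither $k\geq 4$ nor commutativity.

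For (b)$\Rightarrow$(a), I would show that reducibility forces every $g\in F_2(\Psi)$,
\[
g(\sigma_1,\ldots,\sigma_{k-1})=f_\psi\bigl(f_{\psi_1}(\sigma_1),\ldots,f_{\psi_{k-1}}(\sigma_{k-1})\bigr),\qquad \sigma_i\in\Omega^{k-1},
\]
to be symmetric in its $(k-1)^2$ inputs, and then invoke Theorem~\ref{symthm} to conclude (a). Commutativity of each $\psi_i$ makes $g$ invariant under the within-block group $S_{k-1}^{k-1}$; adjoining cross-block transpositions between every pair of blocks generates the full $S_{(k-1)^2}$. By symmetry of the argument it thus suffices to prove invariance of $g$ under an arbitrary cross-block transposition $\sigma_{i,1}\leftrightarrow\sigma_{j,1}$. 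Writing $\alpha=\sigma_{i,1}$, $\beta=\sigma_{j,1}$, $\gamma_1=(\sigma_{i,2},\ldots,\sigma_{i,k-1})$, $\gamma_2=(\sigma_{j,2},\ldots,\sigma_{j,k-1})$, $u=f_{\psi_i}(\alpha,\gamma_1)$, $v=f_{\psi_j}(\beta,\gamma_2)$, $u',v'$ for the analogues after the swap, and letting $w_1,\ldots,w_{k-3}$ denote the outputs $f_{\psi_\ell}(\sigma_{\ell,\cdot})$ for the remaining indices $\ell$, the claim reduces to the identity
\[
f_\psi(u,v,w_1,\ldots,w_{k-3})=f_\psi(u',v',w_1,\ldots,w_{k-3}).
\]

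The core technical step is the derivation of this equality. Commutativity of $\psi_i$ rewrites its defining relations as $u\alpha\gamma_1\models\psi_i$ and $u'\beta\gamma_1\models\psi_i$, exhibiting the length-$2$ strings $(u,\alpha)$ and $(u',\beta)$ as sharing the common $(k-2)$-position extension $\gamma_1$, i.e.\ as $\sim_2$-equivalent in the sense of Remark~\ref{r:extension-exchange}. An analogous argument yields $(v,\beta)\sim_2(v',\alpha)$ via $\gamma_2$ and $\psi_j$. Applying the exchange property twice — first through $(u,\alpha)\sim_2(u',\beta)$ with extension $(v,w_1,\ldots,w_{k-3})$ into $\psi$, then through $(v,\beta)\sim_2(v',\alpha)$ with extension $(u',w_1,\ldots,w_{k-3})$ into $\psi$ — and using commutativity of $\psi$ to freely reorder the arguments of $f_\psi$, produces a chain of equalities that collapses to the desired identity. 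The main obstacle is arranging the extensions so that the two applications of reducibility compose coherently: the outer constraint $\psi$ provides exactly $k-3$ slack positions beyond $u,v$, and the chain closes only when $k-3\geq 1$, i.e.\ $k\geq 4$. For $k=3$ no such slack exists and Theorem~\ref{cor:reducible} correctly fails, which is consistent with Theorems~\ref{thm:sat2} and~\ref{thm:clustering} needing to assume $F_2(\Psi)$-symmetry explicitly in the $k=3$ case rather than deriving it from reducibility alone.
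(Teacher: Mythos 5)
Your reduction via Theorem~\ref{symthm} is exactly the paper's strategy, and your direction (a)$\Rightarrow$(b) is fine. But the crux — deriving $f_\psi(u,v,w_1,\ldots,w_{k-3})=f_\psi(u',v',w_1,\ldots,w_{k-3})$ from reducibility — has a genuine gap in the way you chain the two exchanges. To invoke Remark~\ref{r:extension-exchange} with the pair $(u,\alpha)\sim_2(u',\beta)$ and extension $(v,w_1,\ldots,w_{k-3})$ into $\psi$, you would first need to know that $u\,\alpha\,v\,w_1\cdots w_{k-3}\models\psi$, and this is simply not given. What you do know is $u\,v\,w_1\cdots w_{k-3}\,z_1\models\psi$; the spin $\alpha$ does not occur as an argument of the outer constraint $\psi$ at all, so you cannot make $(u,\alpha)$ a prefix of any satisfying assignment of $\psi$ unless you introduce $\alpha$ (or $\beta$) into a slack slot explicitly. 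The same objection applies to your second exchange with extension $(u',w_1,\ldots,w_{k-3})$.

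The missing step, and the place where $k\geq 4$ actually enters, is that you must first establish that the length-$2$ prefixes $(u,v)$ and $(u',v')$ themselves admit a common extension into $\psi$ — this is the content of Claim~\ref{claim:commonextension} in the paper. You do this by constructing an auxiliary satisfying assignment $u\,v\,\beta\,\eta'\models\psi$ with $\eta'\in\Omega^{k-3}$ chosen freely (this requires at least one slack position beyond $u,v,\beta$, i.e.\ $k\geq 4$). Now $\beta$ literally sits as an argument of $\psi$, so the two $\sim_2$ pairs can actually bite: first exchange $(v,\beta)\to(v',\alpha)$ (common extension $\gamma_2$ into $\psi_j$) to get $u\,v'\,\alpha\,\eta'\models\psi$, then exchange $(u,\alpha)\to(u',\beta)$ (common extension $\gamma_1$ into $\psi_i$) to get $u'\,v'\,\beta\,\eta'\models\psi$. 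This shows $\beta\eta'$ is a common extension of $(u,v)$ and $(u',v')$. Only then can you apply reducibility a \emph{third} time — to the actual constraint $u\,v\,w_1\cdots w_{k-3}\,z_1\models\psi$ with $(u,v)\sim_2(u',v')$ — to obtain $u'\,v'\,w_1\cdots w_{k-3}\,z_1\models\psi$ and conclude $z_1=z_2$. You flagged "arranging the extensions" and the $k-3\geq 1$ slack count as the obstacle, which is the right intuition, but your written chain substitutes the constraint's own values $v,w_\ell$ for the slack where $\beta$ must go, and as a result neither application of exchange has its hypothesis satisfied.
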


  \begin{remark} 
  \begin{enumerate}[(a)]
      \item    Determining whether an arbitrary UE commutative constraint is a group constraint requires \emph{prima facie} exponentially many satisfaction queries in both $r=|\Omega|$ and $k$, as one would need to check every permutation of $\Omega$ (even if one knew the isomorphism class), and thus the complexity is approximately $r!r^k$. However, checking reducibility of a given $k$-ary UE constraint function can be done more efficiently with complexity approximately $kr^{2k}$. 
    
  \item  The assumption $k\geq 4$ in Theorem \ref{cor:reducible} is necessary. In fact, it is easy to show that every $3$-ary UE commutative constraint function is reducible (we leave this as an easy exercise to the reader). However, not all these constraint functions are equivalent to group constraint functions. A concrete example for $(k,q)=(3,4)$ was given by Connamacher and Molloy where $\Psi$ contains three constraint functions, each as a group constraint, but $F_2(\Psi)$ is not symmetric. We provide a family of $3$-ary UE commutative constraint functions that are not equivalent to any group constraint function in section \ref{sec:rediciblethm}. The same construction indeed also provides a family of $k$-ary non-reducible UE constraint functions for $k\ge 4$.

\item The special case of $\Psi=\{\psi\}$ of Theorems~\ref{symthm} and~\ref{cor:reducible} can be proved using known results from quasigroup theory, and we sketch these arguments in the Appendix. It is possible that the quasigroup results we rely on can be further generalised to handle the full versions of Theorems~\ref{symthm} and~\ref{cor:reducible}. We have not pursued this direction. Instead, we provide self-contained proofs of these theorems without using quasigroup theory, and we believe that these proofs are of independent interest.
  \end{enumerate}

 \end{remark}   

\subsection{More conjectures and future work}

While Conjecture~\ref{conj:Molloy} remains open, determining the threshold of satisfiability for $H_n(\pi,k,m)$ is of independent interest for many distributions $\pi$. This is especially interesting when the support of $\pi$ corresponds to a natural random system of UE-SAT system, such as linear equations over a field. We propose a few conjectures and open problems related to Conjecture~\ref{conj:Molloy}.

\remove{
Our first conjecture is a natural generalisation of Theorem~\ref{thm:sat}. Note that the satisfiability threshold of $H_n(\psi,k,m)$ in Theorem~\ref{thm:sat} is independent of $\psi$. It is therefore plausible to conjecture that the same threshold holds if $\psi$ is chosen randomly according to a distribution $\mu$ supported on reducible members of $\Lambda_{k,r}$, provided that $\supp(\mu)$ contains some $\psi$ that does not accept a constant solution.

\begin{conjecture}
            Suppose that $k\geq 3$ and that $\mu $ is a distribution over $\Lambda_{k,r}$ satisfying
            \begin{itemize}
                \item all functions in $\supp(\mu)$ are commutative and reducible if $k\ge 4$; or
                \item all functions  $\psi\in \supp(\mu)$ are commutative with symmetric $f_{\psi}^{(2)}$, if $k= 3$.
            \end{itemize}
            Let $d_k$ be defined as in Theorem~\ref{thm:equation-sat}.
        If $\mu$ admits no constant solutions, then $d_k/k$ is the satisfiability threshold (for $d$ where $d=m/n$) of $H_n(\mu,k,m)$. 
\end{conjecture}

The proof of Theorems~\ref{thm:sat} and~\ref{thm:sat2} apply Theorem~\ref{thm:sat3} only for atomic distributions $\mu$ supported on a single $\psi_{G,b}$. To strengthen these results and fully exploit Theorem~\ref{thm:sat3} \jc{Theo knows how to do it, probably}, it is natural to consider distributions $\pi$ over $\Lambda_{k,r}$ supported on sets of commutative UE constraint functions that are equivalent to group constraint functions on some abelian group $(G,+)$. This raise the following question.  

\begin{problem} Let $k\ge 3$.
    Given a set $S$ of $k$-ary commutative UE constraint functions, is there a characterisation, or an efficient algorithm, that determines whether there exists an abelian group $(G,+)$ such that every member in $S$ is equivalent to a group constraint function on $(G,+)$? 
\end{problem}

The model $H_n(\mu,k,m)$ in Theorem~\ref{thm:sat3} is a random UE-SAT system arising from random equations over finite abelian groups. We conjecture that the same result holds if the abelian condition is removed.
}
Our first conjecture is a natural generalisation of Theorem~\ref{thm:sat3} by removing the abelian condition.
\begin{conjecture}
    Theorem~\ref{thm:sat3} holds if $G$ is a finite group of order at least two.
\end{conjecture}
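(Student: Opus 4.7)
The plan is to follow the structure of the proof of Theorem~\ref{thm:sat3}, replacing the Fourier/character-theoretic tools (that are standard for abelian $G$) with arguments that work for a general finite group $G$ of order $r \ge 2$. As in the abelian case, I split the analysis into the upper bound ($m > (d_k/k+\eps)n$ gives unsatisfiability a.a.s.) and the lower bound ($m < (d_k/k-\eps)n$ gives satisfiability a.a.s.).

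For the upper bound, I would run a 2-core / first-moment argument: condition on the 2-core $C$ of the underlying $k$-uniform hypergraph, which a.a.s.\ satisfies $|E(C)| - |V(C)| \ge \delta n$ for some $\delta=\delta(\eps) > 0$ once $m > (d_k/k+\eps)n$. The key first-moment input is that the expected number of solutions of the restricted group-equation system is at most $r^{|V(C)| - |E(C)|}$. This still holds over a non-abelian $G$ because, for a single equation $\sigma_{i_1}\cdots \sigma_{i_k} = b$ on a uniformly random $\sigma \in G^V$, the product $\sigma_{i_1}\cdots \sigma_{i_k}$ is uniform on $G$ (fix the first $k-1$ variables, then $\sigma_{i_k}$ is independent and uniform). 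Combined with the no-constant-solution assumption on $\supp(\pi)$, which rules out trivial accumulations of solutions, Markov's inequality closes this direction.

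For the lower bound, I would first peel off the hypergraph down to its 2-core; this phase preserves satisfiability in any group because each peeled variable lies in a unique equation that is uniquely extendable. On the 2-core I would carry out a second-moment/Laplace argument: writing $Z$ for the number of solutions of the 2-core system, I want $\mathbb{E}[Z^2] = O(\mathbb{E}[Z]^2)$ below the threshold. In the abelian case this reduces to optimizing an exponential rate function over the simplex of coordinatewise overlap frequencies, with the maximum attained at the uniform overlap. In the non-abelian case the natural overlap between $\sigma,\tau$ is the empirical distribution of the pairs $(\sigma_v,\tau_v) \in G \times G$, acted on by the left- and right-regular representations of $G$; after quotienting by these symmetries, I expect the optimization to again be dominated by the uniform overlap, yielding a constant-factor second moment.

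The main obstacle will be this second-moment computation on the 2-core: the contribution of each constraint to $\mathbb{E}[Z^2]$ depends on the full $G \times G$ overlap matrix in a way that does not diagonalize via a single character sum when $G$ is non-abelian, and one must instead decompose via irreducible representations of $G$ (including higher-dimensional ones), tracking non-commutative Fourier coefficients of $\pi$ and handling the non-negativity issues arising when matrix-valued characters are squared. A cleaner alternative is the planting/contiguity framework of Coja-Oghlan and collaborators: establish that $H_n(\pi,k,m)$ conditioned on satisfiability is contiguous to the planted model (sample $x\in G^n$ uniformly, then sample each constraint uniformly subject to being satisfied by $x$), and verify the expected number of clusters in the planted model directly. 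The planting step itself works uniformly in $G$, so once contiguity is established up to $d_k/k$, the non-abelian case would reduce to essentially the same combinatorics carried out in the proof of Theorem~\ref{thm:sat3}.
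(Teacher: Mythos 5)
The statement you are attempting to prove is posed in the paper as an \emph{open conjecture}; the authors explicitly flag the non-abelian case as unresolved and give no argument, so there is no paper proof to compare against. What you have written is a research outline rather than a proof, and its key quantitative claim in the unsatisfiability direction does not hold as stated. You assert that, conditional on the 2-core $C$, the expected number of solutions of the restricted system is at most $r^{|V(C)|-|E(C)|}$, using that the group product of $k$ uniform spins is uniform on $G$. That reasoning gives $\pr[\text{a fixed }\sigma\text{ satisfies a constraint}]=1/r$ only when the right-hand side $b$ is drawn \emph{uniformly} from $G$, but Theorem~\ref{thm:sat3} allows an arbitrary distribution $\pi$ on $\{\psi_{G,b}\}_{b\in G}$ — including atomic $\pi$ concentrated on a single $b$. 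For a fixed assignment $\sigma$, the probability that a random constraint is satisfied is then governed by the empirical distribution of products $\sigma_{i_1}\cdots\sigma_{i_k}$ over random $k$-tuples, which can be far from uniform and in particular much larger than $1/r$ (the no-constant-solution hypothesis only excludes the most degenerate case $\sigma\equiv\text{const}$). The paper's abelian proof does not in fact use a bare first-moment bound here; it first shows (via linear independence of rows over $\FF_p$) that the 2-core system of a \emph{sub}matrix has exactly $q^{n^*-m^*}$ solutions (Claim~\ref{claim:top-system}), and only then unions over that controlled solution set using the no-constant-solution hypothesis (Claim~\ref{claim:bottom-system}). That intermediate step — pinning down the number of 2-core solutions to within $q^{O(\eps_n n)}$ — is exactly what collapses for a non-abelian $G$, where there is no rank/nullity count, and your outline offers no replacement for it.

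On the satisfiability side, peeling to the 2-core does carry over verbatim (unique extendability suffices), but the abelian argument then again falls back on $\FF_p$-linear algebra (full row rank $\Rightarrow$ the system has $q^{n-m}$ solutions), which has no non-abelian analogue. You correctly identify that a second-moment or planting/contiguity argument is the natural substitute, and you also correctly identify the obstacle: the per-constraint overlap contribution no longer diagonalizes through a single character sum, and one would have to control matrix-valued Fourier coefficients over all irreducible representations of $G$, including issues of positivity for the resulting rate function. You leave this entirely open, as does the paper. Finally, note that the paper's abelian proof also invokes the structure theorem to reduce to cyclic $\ZZ_q$, another step with no non-abelian counterpart. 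In short, the proposal is a plausible roadmap but the unsatisfiability step as written is incorrect for non-uniform $\pi$, the satisfiability step is missing its core computation, and neither gap is resolved; the conjecture remains open.
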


Our second conjecture is a generalisation of Theorem~\ref{thm:sat2} which weakens the condition that $\supp(\pi)$ is reducible. For a single UE constraint $\psi$, the sets $F_i(\{\psi\})$ are singletons for all $i\geq 0$.  Let $f^{(i)}_\psi$ denotes the unique function in $F_i(\{\psi\})$. Given a set $\Psi\subseteq \Lambda_{k,r}$, if $f_{\psi}^{(2)}$ is symmetric for every $\psi\in \supp(\pi)$, it does not necessarily follow that $F_2(\Psi)$ is symmetric.

\begin{conjecture}
Theorem~\ref{thm:sat2} holds if $\supp(\pi)$ satisfies one of the two following conditions.
\begin{itemize}
    \item $k\ge 4$ and every $\psi\in \supp(\pi)$ is reducible.
    \item $k=3$ and $f_{\psi}^{(2)}$ is symmetric for every $\psi\in \supp(\pi)$ .
\end{itemize}
\end{conjecture}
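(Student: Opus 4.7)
The plan is to apply Theorems~\ref{symthm} and~\ref{cor:reducible} to each singleton $\{\psi\}$ to realise each $\psi\in\supp(\pi)$ as a group constraint $\psi_{G_\psi,b_\psi}$ for some group $(G_\psi,+_\psi)$ of order $r$ and some $b_\psi\in G_\psi$; the $k\geq 4$ case uses reducibility of $\{\psi\}$, while the $k=3$ case uses symmetry of $f_\psi^{(2)}$, which is the singleton case of symmetry of $F_2(\{\psi\})$. The groups $G_\psi$ may however differ across $\psi$, in contrast to Theorem~\ref{thm:sat2}. The upper bound $m>(d_k/k+\eps)n$ should follow essentially verbatim from the argument of Theorem~\ref{thm:sat2}: each UE constraint is satisfied by exactly a $1/r$ fraction of $\Omega^k$, so conditional on a $2$-core with $n_c$ variables and $m_c$ constraints, the expected number of assignments satisfying the $2$-core is $r^{n_c-m_c}$, and standard random-hypergraph estimates give $m_c-n_c=\Theta(n)$ a.a.s.\ above the threshold. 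This part of the argument is algebraically content-free and uses only unique extendability.

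For the lower bound I would first apply the standard peeling: by unique extendability, any variable of degree one can be uniquely assigned to satisfy its incident constraint independently of the other variables, so satisfiability of $H_n(\pi,k,m)$ reduces to that of the $2$-core $H^{(2)}$; below threshold, $n_c-m_c=\Theta(n)$ a.a.s. I would then attempt a second moment on the number of solutions $Z$ of $H^{(2)}$, conditioned on the type profile $(m_\psi)_{\psi\in\supp(\pi)}$, which concentrates around $(\pi(\psi)m)_\psi$. The first moment is $\mathbb{E}[Z]=r^{n_c-m_c}$, independent of the types. For the second moment, the pair-overlap probability for two fixed assignments $x,y:V_n\to\Omega$ decomposes as
\begin{equation*}
\pr[x,y\models\text{random clause}]=\sum_{\psi\in\supp(\pi)}\pi(\psi)\,\pr\Bigl[\sum_{j=1}^k x_{i_j}=b_\psi=\sum_{j=1}^k y_{i_j}\text{ in }G_\psi\Bigr],
\end{equation*}
and a Fourier expansion on each $G_\psi$ reduces each summand to sums of characters $\sum_i\chi_\psi(x_i-y_i)$, with differences computed in $G_\psi$. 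For near-balanced overlap, all nontrivial character sums are $o(n)$, so the near-balanced regime contributes $\mathbb{E}[Z^2]=(1+o(1))(\mathbb{E}[Z])^2$; a small-subgraph conditioning in the spirit of~\cite{ayre2020satisfiability} should then upgrade the resulting polynomial lower bound on $\pr[Z>0]$ to $1-o(1)$.

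The main obstacle is the large-deviations analysis of unbalanced overlap profiles when $r$ is composite and $\supp(\pi)$ mixes constraints from non-isomorphic groups of the same order, for example $\ZZ/4\ZZ$ and $\ZZ/2\ZZ\times\ZZ/2\ZZ$ at $r=4$. Characters of non-isomorphic groups have different kernels, so an overlap profile that renders one term in the sum above large need not render the others large, and one must certify that no single profile is simultaneously bad for a $\pi$-weighted dominant fraction of the constraint types. A complementary route worth pursuing in parallel is a planted-model/contiguity argument, matching the joint distributions of short algebraic cycles of each type between the null and planted models on $H^{(2)}$; this sidesteps part of the global second-moment computation at the cost of a per-type small-subgraph conditioning step, and, combined with Theorem~\ref{thm:sat3} applied type-by-type as a benchmark, should be enough to close the lower bound.
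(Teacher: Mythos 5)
This statement is explicitly labeled a \emph{conjecture} in the paper, and the authors do not supply a proof, so there is no in-paper argument to compare against. Your review therefore has to be judged on its own terms, and as you yourself observe, what you have is an outline with a genuine hole rather than a proof. The preliminary reduction is the right starting point: applying Theorems~\ref{symthm} and~\ref{cor:reducible} to each singleton $\{\psi\}$ realizes each $\psi\in\supp(\pi)$ as a group constraint $\psi_{(\Omega,+_\psi),b_\psi}$, and the crux of the conjecture (as opposed to Theorem~\ref{thm:sat2}) is precisely that the groups $(\Omega,+_\psi)$ need not coincide across $\psi$. Your upper bound is also fine and in fact lighter than the route the paper takes for Theorem~\ref{thm:sat3}: above $d_k$ the $2$-core has $m_c-n_c=\Theta(n)$, so $\mathbb{E}[Z]=r^{n_c-m_c}\to 0$ and Markov finishes, using nothing but unique extendability.

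The gap is exactly where you flag it, and it is not a minor technicality. For the lower bound, the second-moment sum over overlap types
\begin{equation*}
\sum_{\psi}\pi(\psi)\,\pr\Bigl[\textstyle\sum_j x_{i_j}=b_\psi=\sum_j y_{i_j}\text{ in }G_\psi\Bigr]
\end{equation*}
does not factor cleanly through a single algebraic structure on $\Omega$. The overlap statistic that controls the $\psi$-th summand is the empirical distribution of $x_v -_\psi y_v$ computed in $G_\psi$, and a profile that is near-uniform for $G_\psi$ can be maximally unbalanced for $G_{\psi'}$ when $G_\psi\not\cong G_{\psi'}$ (your $\ZZ/4\ZZ$ vs.\ $\ZZ/2\ZZ\times\ZZ/2\ZZ$ example at $r=4$ is the prototypical case). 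You would need to show that no joint overlap distribution on $\Omega^2$ is simultaneously ``near-satisfying'' for a $\pi$-weighted majority of constraint types at any density below $d_k$, and the usual Fourier/large-deviation machinery does not decouple the group structures; this is a new optimization problem, not an invocation of existing estimates. Your fallback via planted-model contiguity runs into the same obstruction: Theorem~\ref{thm:sat3} is a statement about a mixture of right-hand sides over a \emph{single} abelian group, and the paper's proof of it leans on full row rank of $B$ over a fixed prime field, a notion that does not transfer to a constraint mixture living over several incompatible group structures. Since no step of your sketch addresses this cross-group coupling, and the paper itself leaves the statement as open, the proposal has a concrete missing idea rather than a presentational gap.

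One smaller caveat worth noting in the $k\geq 4$ branch: Theorem~\ref{cor:reducible} is stated with a commutativity hypothesis on each $\psi$, and reducibility of $\{\psi\}$ alone does not manifestly imply commutativity for $k\geq 4$ (for instance $\psi(a,b,c,d)=\ind{a-b+c-d=0}$ over $\ZZ_r$ is UE, satisfies Definition~\ref{def:reducibility} for every prefix split, but is not commutative). This same issue already sits implicitly in the paper's derivation of Theorem~\ref{thm:sat} from Theorem~\ref{cor:reducible}, so it is not a flaw you introduced, but a complete proof of the conjecture would have to either supply the missing commutativity implication or restrict the hypothesis accordingly.
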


\noindent{\bf Note.} The setting of this conjecture permits a random system of group equations of form~\eqref{def:psi-b}, where different equations can be over different groups, provided that all the groups have the same order. This flexibility is motivated by the fact that the SAT threshold in Theorem~\ref{thm:sat3} is independent of the group structure of $G$. However, our current proof crucially relies on the assumption that all equations are defined over a single common group. 

Finally, we make the following conjecture on the phase transition of replica symmetry breaking at $d_k$. 
It has been shown that the ``replica symmetry breaking'' (or ``condensation'') occurs at density $d_k$ for random linear equations over finite fields.  We conjecture that the same holds for $H_n(\pi,k,m)$, at least when $\supp(\pi)$ is reducible. Recall the definition of $Q_{\pi}$ before Theorem~\ref{thm:clustering}.

\begin{conjecture}\label{conj:RSB}
Suppose that $\supp(\pi)$ satisfies one of the two following conditions.
\begin{itemize}
    \item $k\geq 4$ and $\supp(\pi)$ is reducible;
    \item $k=3$, and every function in $F_2(\supp(\pi))$ is symmetric.
\end{itemize}
Let $\eps>0$.

\begin{enumerate}[(a)]

    \item If $m<(d_k/k-\eps)n$, $x$ is a uniform random solution of $H_n(\pi,k,m)$, and $u,v$ are uniform random variables in $V_n$, where $x,u,v$ are mutually independent, then 
    \begin{equation}
\pr[x_u=\sigma, x_v=\tau] =1/r^2+o(1),\quad \text{for all $\sigma,\tau\in \Omega$}. \label{eq:RS}
    \end{equation}
    \item If $m>(d_k/k+\eps)n$ and $Q_{\pi}\neq \emptyset$, then, for a uniform random solution $x$ of $H_n(\pi,k,m)$, property~\eqref{eq:RS} does not hold a.a.s.
\end{enumerate}
\end{conjecture}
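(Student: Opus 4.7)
The plan is to combine the group-constraint reduction from Theorems~\ref{symthm} and~\ref{cor:reducible} with moment control on linear systems over abelian groups. After the reduction, every constraint in $\supp(\pi)$ has the form $\psi_{G,b_\psi}$ for one common finite abelian group $(G,+)$ of order $r$, so $H_n(\pi,k,m)$ becomes a random system $\sum_{i\in C} x_i = b_C$ over $G$ with each $b_C$ drawn independently from the push-forward of $\pi$ under $\psi_{G,b}\mapsto b$.

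For part~(a), the squarefree case is the base step. When $r=p_1\cdots p_t$ is a product of distinct primes, $G\cong \prod_i \FF_{p_i}$ and the random system decomposes, by the Chinese remainder theorem, into $t$ random $\FF_{p_i}$-linear systems sharing one common random hypergraph; the solution space factors, so the two-variable marginal of a uniform solution is a product of two-variable marginals in each factor. Replica symmetry in each factor, for densities below $d_k/k$, is the content of~\cite{ayre2020satisfiability} and gives the squarefree case along the route sketched in Remark~\ref{r:RSB}. For general $r$, my plan is to induct on the $p$-adic height of the Sylow subgroups of $G$, working along the filtration $0\subset p^{e-1}G\subset\ldots\subset pG\subset G$ whose successive quotients are $\FF_p$-vector spaces. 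Writing a uniform solution as $x=s(\bar x)+pw$ via any set-theoretic section $s:G/pG\to G$ and conditioning on the projection $\bar x\in (G/pG)^n$, each defining equation becomes $p\sum_{i\in C} w_i = b_C-\sum_{i\in C} s(\bar x_i)$; the right-hand side automatically lies in $pG$ because $\bar x$ satisfies the projected equation, and rescaling identifies the conditional distribution of $w$ with that of a uniform solution of a random linear system on the same hypergraph over the smaller abelian group $pG$, to which the inductive hypothesis applies.

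For part~(b), the remark following Theorem~\ref{thm:clustering} forces $\pi$ to be atomic on a single $\psi$, so $Q_\pi$ coincides with $Q_\psi:=\{\sigma\in\Omega:\sigma^k\models\psi\}$ and $h:=|Q_\psi|\ge 1$. Theorem~\ref{thm:clustering}\eqref{thm:homogeneousclustering} partitions the a.a.s.\ solution set into $h$ equal-sized clusters that are pairwise $\Omega(n)$-separated, and since each constant $\sigma^n$ with $\sigma\in Q_\psi$ is itself a solution, each cluster $C_\sigma$ contains exactly one such constant. Combining the sub-exponential cluster-size bound from~\eqref{thm:clustering-b-ii} with the $\Omega(n)$-separation, a short argument shows that a uniform solution in $C_\sigma$ agrees with $\sigma^n$ on $n-o(n)$ coordinates, so for a uniform random solution $x$ and two independent uniform variables $u,v$,
\begin{equation*}
\pr[x_u=\alpha, x_v=\beta] = h^{-1}\ind{\alpha=\beta\in Q_\psi} + o(1).
\end{equation*}
This never equals $1/r^2+o(1)$, since for $\alpha=\beta\in Q_\psi$ the difference is at least $1/h-1/r^2\ge 1/r-1/r^2>0$.

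The main obstacle is the inductive step in part~(a): the smaller linear systems in the filtration share one common random hypergraph, and their right-hand sides are deterministic functions of the outer solution together with the original $b_C$'s, so they are not fresh copies of $H_n$ but rather correlated instances. Proving that the two-variable marginal on $pG$ remains uniform after this conditioning seems to require quantitative entropy control going beyond the first-moment and cluster-counting arguments available in our proofs; this is precisely why we leave the statement as a conjecture.
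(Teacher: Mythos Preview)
Your overall plan coincides with the paper's own treatment in Remark~\ref{r:RSB}: the statement is left as a conjecture; part~(a) for squarefree $r$ is obtained by reducing via Theorems~\ref{symthm} and~\ref{cor:reducible} to products of prime fields and quoting the replica-symmetry result of~\cite{ayre2020satisfiability}; part~(b) is asserted to follow from Theorem~\ref{thm:clustering}\eqref{thm:homogeneousclustering}; and the general-$r$ case of~(a) is the genuinely open part. Your filtration idea and your diagnosis of its obstacle (the inner system over $pG$ inherits correlated, non-fresh right-hand sides on a shared hypergraph) are exactly why the paper does not attempt a proof there.

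Your argument for part~(b), however, contains a real error. You invoke ``the sub-exponential cluster-size bound from~\eqref{thm:clustering-b-ii}'', but that bound is \emph{not} sub-exponential: the limit in~\eqref{thm:clustering-b-ii} equals $(1-d/k)\log r$ minus $(\rho_{k,d}-d\rho_{k,d}^{k-1}+(d-d/k)\rho_{k,d}^k)\log r$, and for $d$ just above $d_k$ the second term vanishes while $1-d/k>0$, so each cluster has size $r^{\Theta(n)}$. Consequently your claim that a uniform solution in $C_\sigma$ agrees with $\sigma^n$ on $n-o(n)$ coordinates is false in this regime---there are $(1-\rho_{k,d})n$ variables outside the $2$-core on which solutions in $C_\sigma$ may differ from $\sigma$---and your displayed two-point marginal $h^{-1}\ind{\alpha=\beta\in Q_\psi}+o(1)$ does not follow. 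What the cluster description actually yields (via cycle equivalence and Lemma~\ref{lem:cyclebound}) is agreement with $\sigma^n$ only on the set $V^*$ of $2$-core variables outside flippable cycles, of size $\rho_{k,d}n+o(n)$; conditioning on $u,v\in V^*$ then forces $x_u=x_v\in Q_\psi$ deterministically, and this is the correct starting point for contradicting~\eqref{eq:RS}.
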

\begin{remark}\label{r:RSB}
Part (b) of the conjecture follows easily from Our Theorem~\ref{thm:clustering} (c).
By our Theorems~\ref{symthm},~\ref{cor:reducible}, and the RS property of the solution space of random linear equations over finite fields~\cite[eq.~(2.5)]{ayre2020satisfiability}, part (a) of the conjecture follows immediately if $r$ is a product of distinct prime numbers. However, this approach of reduction does not work if for instance $r=p^k$ for some prime $p$ and $k\ge 2$, since the solutions of linear equations over $GF(r)$ is not bijectively mapped to the solutions over ${\mathbb Z}_r$.

\end{remark}

The structure of the rest of the paper is organised as follows. We prove Theorem~\ref{thm:sat3} in section~\ref{sec:satthm}. Then,
in section~\ref{sec:symmetricthm}, we prove Theorem~\ref{symthm} and Theorem~\ref{cor:reducible}. The proof for Theorem~\ref{thm:clustering}  is presented in Section~\ref{sec:clustering}. In section \ref{sec:rediciblethm},  we exhibit a family of UE-constraint functions that are not equivalent to group constraint functions for $k\geq 3$. 
In the Appendix, we sketch alternative proofs for special cases of Theorems~\ref{symthm} and~\ref{cor:reducible} when $\supp(\pi)=\{\psi\}$, and demonstrate their connections to quasigroup theory.

\section{Proof of Theorem~\ref{thm:sat3}}\label{sec:satthm}

In general,
given $H=H_n(\pi,k,m)$, and $1\le a\le m$, let $\psi_a$ denote the $a$-th constraint function, and let  $\partial_a=(\partial_{a,1},\ldots, \partial_{a,k})\in V_n^k$ denote its constraint variables. Hence, the $a$-th constraint is given by $(\psi_a,\partial_a)$.

Given an instance $H$ from the sample space of $H_n(\pi,k,m)$, let $B=B(H)$ denote the $m\times n$ binary matrix defined by $B_{a,v}=\ind{v\in \partial_a}$. Then, $B(H_n(\pi,k,m))$ is a random $m\times n$ matrix in which each row has exactly $k$ nonzero entries equal to 1, whose positions are chosen uniformly at random from $[n]$, and choices are independent for each row. If there exists an abelian group $(G,+)$ such that every constraint function in $H$ is equivalent to $\psi_{G,g}$ for some $g\in G$, then, let $b=b(H)\in G^m$ be defined by $b_a=c$, where $\psi_a=\psi_{G,c}$. This way, the set of solutions to $H$ is exactly the set of solutions to $Bx=b$.

The 2-core of $B$, denoted by $B_{\text{2-core}}$, is defined to be the maximum submatrix of $B$ such that every column has at least two nonzero entries. The threshold for the appearance of, and the size of $B_{\text{2-core}}$ have been well understood. For our proof, we use the following lemma from~\cite[Lemmas 12 and 14]{gao2023minors}.

\begin{lemma}\label{indeprows} Let $(k,r)\ge (3,2)$ and $\psi\in\Lambda_{k,r}$. Let $\FF$ be a finite field. There is  $\eps_n=o(1)$ such that if $m=(d_k/k-\eps_n)n$ then the following hold a.a.s.\ for $B=B(H_n(\psi,k,m))$.
\begin{enumerate}[(a)]
    \item The rows of $B$ are linearly independent over $\FF$.
    \item With $n^*$ and $m^*$ denoting the number of columns and rows in $B_{\text{2-core}}$, $n^*=\Theta(n)$ and $m^*=(1+O(\eps_n))n^*$.
\end{enumerate}
\end{lemma}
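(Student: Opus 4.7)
The plan is to treat the two parts sequentially, since part (a) leverages the structural estimates from part (b).

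For part (b), I would analyze the 2-core via the peeling process applied to the random $k$-uniform hypergraph whose incidence matrix is $B$: vertices are the $n$ columns and edges are the $m$ rows. Iteratively remove any vertex of degree at most one together with its incident edge; the fixed point is the 2-core. Via Wormald's differential equation method (or equivalently a Poisson cloning coupling), in the regime $m = dn/k$ with $d$ a constant, the sizes $n^*$ and $m^*$ concentrate around their means, with $n^*/n \to \rho - d\rho^{k-1} + d\rho^k$ and $m^*/n \to d\rho^k/k$, where $\rho = \rho_{k,d}$ satisfies $\rho = 1 - \exp(-d\rho^{k-1})$. A short algebraic manipulation then gives $(n^* - m^*)/n \to \rho - d\rho^{k-1} + (1-1/k)d\rho^k$, which vanishes at $d = d_k$ by definition and, by continuity together with $\eps_n = o(1)$, is $O(\eps_n)$ at $d = d_k - \eps_n$; this yields $m^* = (1 + O(\eps_n))n^*$. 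Since $d_k > d_k^\star$ and $\rho_{k,d}$ is bounded away from $0$ for $d > d_k^\star$, we also obtain $n^* = \Theta(n)$.

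For part (a), the key observation is that if $y \in \FF^m$ is nonzero and $y^\top B = 0$, then any column that contains exactly one row index $a \in \supp(y)$ would force $y_a = 0$. Hence every column incident to $\supp(y)$ must contain at least two rows of $\supp(y)$, which is exactly the statement that $\supp(y)$ is contained in the row set of the 2-core. Therefore the rows of $B$ are linearly independent over $\FF$ if and only if the rows of $B_{\text{2-core}}$ are, and it suffices to establish the latter. To do this, I would run a first-moment argument on the number of nonzero left-null vectors of $B_{\text{2-core}}$, stratified by support size $\ell$. For a fixed support $S \subseteq [m^*]$ with $|S| = \ell$ and coefficient pattern $\alpha \in (\FF\setminus\{0\})^S$, the event $\alpha^\top (B_{\text{2-core}})_S = 0$ requires the $\FF$-weighted sum of rows of $S$ in each incident column to vanish. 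Bounding this by $\binom{m^*}{\ell}(|\FF|-1)^\ell \cdot P(\ell)$ and summing over $\ell$, the task reduces to controlling $P(\ell)$ uniformly in $\ell$ and in the coefficient pattern.

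The main obstacle is precisely this uniform bound on $P(\ell)$. For small $\ell$ (sub-linear in $n$), the tree-like local geometry of the random hypergraph makes 2-redundant sets with many distinct columns unlikely, and an explicit configuration-model estimate suffices. For $\ell$ linear in $n$, the near-critical identity $m^* = (1 + o(1))n^*$ from part (b) is crucial: any large 2-redundant family of rows would need to create far more incidences than the 2-core can accommodate once the ``slack'' $n^* - m^*$ is only $o(n)$. Formally, this is handled via a configuration-model representation of the 2-core together with switching or entropy inequalities, as in~\cite{gao2023minors}. Once the expected number of nontrivial dependencies is shown to be $o(1)$ uniformly over the (constant-sized) field $\FF$, Markov's inequality yields that $B_{\text{2-core}}$ has full row rank a.a.s., completing part (a).
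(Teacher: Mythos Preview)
The paper does not actually prove this lemma: it is quoted directly from \cite[Lemmas 12 and 14]{gao2023minors}, with no argument supplied in the present paper. Your proposal is therefore not in conflict with anything here; rather, it is a sketch of how the cited result is (or could be) established.

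As a sketch it is sound. The reduction in part~(a) to the 2-core is correct: any left-null vector of $B$ is supported on a set of rows whose incident columns all have in-support degree at least two, hence on the 2-core rows. Your outline of part~(b) via the standard peeling analysis and the identity $(n^*-m^*)/n \to \rho_{k,d} - d\rho_{k,d}^{k-1} + (1-1/k)d\rho_{k,d}^k$, which vanishes precisely at $d=d_k$, is exactly the relevant calculation. The only place where real work hides is the first-moment bound on left-null vectors of $B_{\text{2-core}}$: you correctly flag that the linear-in-$n$ support regime is the delicate one and that the near-critical relation $m^*=(1+o(1))n^*$ is the key input, but making this quantitative (through configuration-model enumeration or switching/entropy estimates) is a substantial computation that you ultimately defer to \cite{gao2023minors}---which is precisely what the paper itself does.
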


\begin{proof}[Proof of Theorem~\ref{thm:sat3}]
    By the fundamental theorem of finite Abelian groups, it suffices to consider the case where $\psi$ is equivalent to cyclic group $\ZZ_{q}$, where $q$ is a power of some prime $p$.

The proof for the subcritical case is identical to the treatment in ~\cite{ayre2020satisfiability}.
Suppose $m<(d_k/k-\eps)n$. Then Theorem 1.1 in~\cite{ayre2020satisfiability} shows that $B=B(H_n(\pi,k,m))$ has full row rank as a matrix over $\FF_p$ a.a.s.\ Suppose that $B$ has full row rank over $\FF_p$. Then there is an $m\times m$ submatrix $B'$ of $B$ such that the determinant $\det(B')$ of $B'$ as an integer matrix satisfies $\det(B')\neq 0\pmod p$. Therefore $\det(B')\pmod q$ is an invertible element of the ring $\ZZ_q$, and so $B'$ is invertible over $\ZZ_q$. Therefore the number of solutions to $Bx=b$ is $q^{n-m}>0$. Therefore $H_n(\pi,k,m)$ is satisfiable a.a.s.\

Next, suppose that $m>(d_k/k+\eps)n$.
Let $v\in \{0,1\}^n$ be chosen uniformly from the set of binary strings with length $n$ and containing precisely $k$ 1's. Let $v_1,\ldots,v_{m}$ be independent copies of $v$.
Then, $B=B(H_n(\psi,k,m))$ has the same distribution as the matrix with $m$ rows, with $v_j^T$ being the $j$-th row for $1\le j\le m$. Let $[n]$ and $[m]$ denote the set of columns and rows of $B$.

Let $\eps_n$ be chosen to satisfy Lemma~\ref{indeprows} for the finite field $\FF_p$, and let $m'=(d_k/k-\eps_n)n$ and $m''=m-m'$. Let $B'$ be the submatrix of $B$ consisting of the first $m'$ rows, and $B''$ be the submatrix of $B$ consisting of the last $m''$ rows.
 Given a subset $I$ of rows of $B$, let $B|_I$ and $b|_{I}$ be the matrix and vector obtained by restricting $B$ and $b$ to the rows in $I$ respectively. Let $R_n$ and $C_n$ be the rows  and columns respectively of $B'_{\text{2-core}}$, i.e.\ the 2-core of $B'$.  
\begin{claim}\label{claim:top-system}
    Let $\Sigma$ denote the set of solutions to $B'_{\text{2-core}}\,x=b|_{R_n}$. Then, a.a.s.\ $|\Sigma|=q^{O(\eps_n n)}$.
\end{claim}
Let $R'_n$ be the set of rows in $B''$ such that each row in $R'_n$ is supported only on $C_n$; i.e.\ row $i$ is in $R'_n$ if it is a row in $B''$, and all entries outside $C_n$ are zero. By Lemma~\ref{indeprows}(b) and the Chernoff bounds,
\begin{equation}
\text{a.a.s.}\ |R'_n|=\Omega(m''). \label{eq:size-of-R'}
\end{equation}
\begin{claim}\label{claim:bottom-system} Condition on $R'_n$.
There exists a fixed $\delta>0$ such that
    for every row $i\in R'_n$ and every $x\in\Sigma$, 
    $$\pr[B|_{i}\, x = b|_{i}]\le 1-\delta.$$
\end{claim}
We complete the proof of the theorem assuming Claims~\ref{claim:top-system} and~\ref{claim:bottom-system}. Since rows in $B''$ are independent, restricting their support to lie within $C_n$ and conditioning on $R'_n$ preserves their independence. By~\eqref{eq:size-of-R'} and Claim~\ref{claim:bottom-system}, for every $x\in \Sigma$,
\[
\pr[B|_{R'_n}\, x = b|_{R'_n}]\le (1-\delta)^{\Omega(m'')} \le \exp(- \Omega(\delta(\eps+\eps_n)n)).
\]
Assume that $|\Sigma|=q^{O(\eps_n n)}$ which holds a.a.s.\ by Claim~\ref{claim:top-system}. Then, the probability that there exists $x\in \Sigma$ such that $B|_{R'_n}\, x = b|_{R'_n}$ is $q^{O(\eps_n n)} \exp(- \Omega(\delta(\eps+\eps_n)n))=\exp(-\Omega(\eps n))$, as $\eps,\delta>0$ are fixed and $\eps_n=o(1)$. By Markov's inequality, a.a.s.\ $Bx=b$ is unsatisfiable.

It only remains to prove Claims~\ref{claim:top-system} and~\ref{claim:bottom-system}.

\begin{proof}[Proof of Claim~\ref{claim:top-system}]
    
     By Lemma \ref{indeprows}, the rows of $B'$ are linearly independent a.a.s.\ over $\FF_p$. By the definiton of the 2-core of $B'$, each row in $R_n$ is supported on $C_n$. Consequently, we may assume the  event that the rows of $B'_{\text{2-core}}$ are linearly independent over $\FF_p$, which holds a.a.s.  Then there is an $|R_n|\times |R_n|$ submatrix $D$ of $B'_{\text{2-core}}$ that is invertible over $\FF_p$. Similarly as in the subcritical case, we can conclude that $D$ is invertible over $\ZZ_q$. Therefore the number of solutions to $B'_{\text{2-core}}\, x=b|_{R_n}$ is $|\ZZ_q|^{|C_n|-|R_n|}$, which is $q^{O(\eps_nn)}$ by Lemma~\ref{indeprows}.
\end{proof}

\begin{proof}[Proof of Claim~\ref{claim:bottom-system}]
Fix $i\in R'_n$ and $x\in \Sigma$. 
Since every variable $x_v$ can take at most $q$ values, we immediately have the following obvious observation: 
\begin{equation}\label{obs}
\text{there exists $r\in \FF_q$ such that $|C_n(x,r)|\ge |C_n|/q$ where $C_n(x,r)$ is defined by $\{v\in C_n: x_v=r\}$.}
\end{equation}
 Since $\pi$ does not admit a constant solution, there is some $\phi\in \supp(\pi)$ that is not satisfied by $r^k$. Recall that $\partial i$ denotes the set of constraint variables in the $i$-th constraint, which is exactly the set of columns in $B|_i$ that takes nonzero values. We obtain the following which immediately implies our claimed assertion
   \begin{equation}
        \pr[B|_i\, x\neq  b|_i]\geq \pr[\psi_i=\phi, \partial i\subseteq C_n(x,r)]=\pr[\psi_i=\phi]\pr[ \partial i\subseteq C_n(x,r)]=\Omega(1),\label{eq:bad}
    \end{equation}
 where the last equation above holds, since $\pr[\psi_i=\phi]=\Omega(1)$ as $\phi\in\supp(\pi)$, and $\pr[ \partial i\subseteq C_n(x,r)]=\Omega(1)$ follows by~\eqref{obs}.
\end{proof}
        \end{proof}

\section{Proof of Theorem~\ref{thm:clustering}} \label{sec:clustering}

The 2-core of a matrix $B$ defined in section \ref{sec:satthm} can be equivalently described as the output of an iterative peeling algorithm. At each iteration of the algorithm, remove all columns of $B$ with fewer than $2$ nonzero entries as well as any rows that have nonzero entries in those columns. The algorithm terminates when all columns have at least two nonzero entries, and the resulting matrix is the 2-core of $B$.

The proofs of parts (a,b) are modifications of the proofs in~\cite{achlioptas2015solution}. We ketch briefly the main ideas and how to adjust the proofs of~\cite{achlioptas2015solution}.

\subsection{Proof of Parts (a,b)}

The following definition come from~\cite[Definitions~4 and 7]{achlioptas2015solution}, phrased slightly differently for our setting.

\begin{definition}
    A filppable cycle of $B'=B_{\text{2-core}}$ is a sequence of columns $v_0,\ldots, v_{\ell-1}$ in $B'$ for some $\ell \geq 3$ such that there is a sequence of rows $a_0,\ldots, a_{\ell-1}$ of $B'$ satisfying the following for all $i=0,\ldots, \ell-1\pmod \ell$: $B'_{a_i,v_i}=B'_{a_{i},v_{i+1}}=1$ and $B'_{a,v_i}=0$ for all rows $a$ in $B'$ except for $a=a_{i-1},a_{i}$. 
\end{definition}

\begin{definition}
     Let $V^*$ be the columns in $B_{\text{2-core}}$ that are not contained in a flippable cycle. We say that two solution $y,y'$ to $Bx=b$ are cycle equivalent if $y_v=y'_v$ for all $v\in V^*$.
\end{definition}

Now the set of solutions of $H_n(\pi,k,m)$ is partitioned into cycle equivalent classes. We call each each cycle equivalent class of solutions a cluster. Our part (a) and part (b.iii) and (b.iv) follow by exactly the same proof as~\cite[Theorems~2 and~3]{achlioptas2015solution}, by noticing that their proof only relies on the property that boolean equations are uniquely extendable.

For (b.i) and (b.ii), we count the number of clusters and the size of each cluster. We will use the following result from~\cite{achlioptas2015solution}.
\begin{lemma}[\cite{achlioptas2015solution}, Lemma 35]\label{lem:cyclebound}
    Suppose $k\geq 3$ and $m=dn/k$ for some $d>d_k^\star$. Then the expected number of vertices in flippable cycles of $B$ is $O(1)$.
\end{lemma}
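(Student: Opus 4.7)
The plan is to recast flippable cycles of $B' = B_{\text{2-core}}$ as cycles in an auxiliary multigraph on the rows of $B'$, then apply a classical configuration-model cycle count to show the auxiliary graph is subcritical whenever $d > d_k^\star$. First, I would define a multigraph $M$ whose vertex set is the set of rows of $B'$ and whose edges are the degree-$2$ columns of $B'$ (each such column $v$ contributes the edge $\{a, a'\}$, where $a$ and $a'$ are the two rows containing $v$'s nonzero entries). Unwinding the definition of a flippable cycle, a sequence $v_0, \ldots, v_{\ell - 1}$ of columns is a flippable cycle iff every $v_i$ has column-degree exactly $2$ in $B'$ and the corresponding edges $\{a_{i-1}, a_i\}$ form an $\ell$-cycle in $M$. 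Hence the number of columns lying on some flippable cycle is bounded by $\sum_{\ell \ge 3} \ell \cdot C_\ell(M)$, where $C_\ell(M)$ denotes the number of $\ell$-cycles of $M$.

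Next, I would apply the standard configuration-model description of the 2-core of a random $k$-uniform bipartite incidence structure. For $d > d_k^\star$, the 2-core has linear size, and its degree sequence concentrates near explicit limits determined by the fixed-point equation that defines $\rho_{k,d}$. Conditioning on this (essentially deterministic) degree sequence, the row half-edges ($k$ per row, since rows retain their full degree $k$) and column half-edges ($j$ per degree-$j$ column) are paired uniformly at random, so the standard first-moment argument for cycles in the configuration model yields
\[
\mathbb{E}[C_\ell(M)] \le \frac{\lambda^\ell}{2\ell}(1+o(1)), \qquad \lambda := \frac{2(k-1) X_2}{k m^*},
\]
where $m^*$ is the number of rows and $X_2$ is the number of degree-$2$ columns in the 2-core. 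The parameter $\lambda$ is the mean excess $M$-degree of a row reached via one of its half-edges, which is precisely the branching parameter of the Galton--Watson tree arising in the local weak limit.

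The crux is therefore to prove $\lambda < 1$ strictly throughout $d > d_k^\star$. Since rows of the 2-core have degree $k$, half-edge counting gives $\sum_{j \ge 2} j X_j = k m^*$, so the desired inequality rewrites as $(2k - 4) X_2 < \sum_{j \ge 3} j X_j$. Asymptotically, the column-degree distribution of the 2-core is Poisson with some parameter $\mu > 0$ conditioned to be at least $2$, where $\mu$ is a strictly increasing function of $d$ tied to $\rho_{k,d}$ via the truncated generating function $\sum_{j \ge 2} \mu^j/j! = e^\mu - 1 - \mu$. A direct computation with these generating functions, together with the characterization of $d_k^\star$ as the value of $d$ at which a nontrivial positive solution of the relevant fixed-point equation first appears, yields the required strict inequality. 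With $\lambda < 1$ in hand, summing gives
\[
\sum_{\ell \ge 3} \ell \cdot \mathbb{E}[C_\ell(M)] \le \tfrac{1}{2} \sum_{\ell \ge 3} \lambda^\ell = O(1),
\]
which is the desired bound.

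The main obstacle is the verification of $\lambda < 1$ near the threshold $d = d_k^\star$. At the threshold the 2-core degenerates, and naively $\lambda$ could approach $1$ as $d \downarrow d_k^\star$; obtaining uniform control requires a careful monotonicity analysis of the truncated-Poisson generating function jointly with the fixed-point equation characterizing $\rho_{k,d}$. A secondary technical issue is transferring the bound from the configuration model to the original uniform model via standard contiguity arguments, but this step is routine and contributes only an $O(1)$ multiplicative factor.
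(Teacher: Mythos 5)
The paper does not prove this lemma; it is quoted directly from~\cite{achlioptas2015solution} (Lemma~35) and used as a black box. Your reconstruction --- recast flippable cycles as cycles in the auxiliary multigraph on rows whose edges are the degree-$2$ columns of the $2$-core, then bound cycle counts in the configuration model by a subcritical branching parameter --- is the standard argument and is, in substance, the proof given in the cited reference, so this is not a genuinely different route.

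Two corrective remarks. First, the ``crux'' you identify is in fact immediate and needs no monotonicity analysis of truncated-Poisson generating functions. Writing $\rho = \rho_{k,d}$ and $\mu = d\rho^{k-1}$, the limiting $2$-core statistics give $X_2/n \to e^{-\mu}\mu^2/2$ and $km^*/n \to d\rho^k = \mu\rho$, hence
\[
\lambda \;\to\; \frac{(k-1)\,\mu\,e^{-\mu}}{\rho} \;=\; (k-1)\,d\,\rho^{k-2}\,e^{-d\rho^{k-1}},
\]
which is exactly the derivative at $x=\rho$ of the fixed-point map $x \mapsto 1 - \exp(-dx^{k-1})$. Because $\rho_{k,d}$ is the \emph{largest} fixed point, the curve crosses $y=x$ from above there, so this derivative is strictly below $1$ for every fixed $d > d_k^\star$, with equality only at $d = d_k^\star$. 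Second, your final paragraph's worry about uniform control as $d \downarrow d_k^\star$ is a false obstacle: the lemma is stated for a single fixed $d > d_k^\star$, and the implied $O(1)$ constant is allowed to depend on $d$. Indeed it does blow up as $d \downarrow d_k^\star$ (consistent with $\lambda \to 1$), but that is compatible with the statement and with its use later in the paper, where it is invoked only for $d \ge d_k + k\eps$, well away from $d_k^\star$.
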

First consider the case where $Bx=b$ is a system of equations over a cyclic group $\ZZ_q$, where $q$ is a power of some prime $p$. By definition of the solution clusters and Lemma~\ref{lem:cyclebound}, the number of clusters $N$ is the number of solutions to $B_{2-\text{core}}x=b$ up to a factor of $q^{\log n}$, as two such solutions might be contained in the same cluster, if they differ only on the set of variables that are contained in flippable cycles of $B_{\text{2-core}}$ --- by Lemma~\ref{lem:cyclebound} there are a.a.s.\ at most $\log n$ such variables. Let $m^*\times n^*$ be the dimensions of $B_{\text{2-core}}$. If we consider $B_{2-\text{core}}$ as a binary matrix over $\FF_p$, Lemma \ref{indeprows} shows that the rows of $B_{\text{2-core}}$ are linearly independent. There is therefore an $m^*\times m^*$ binary invertible submatrix $D$ of $B_{\text{2-core}}$. Since $\det(D)\neq 0 \pmod p$, we have $\det(D)\neq 0\pmod q$ and thus $D$ is invertible over $\ZZ_q$. Therefore the number of $2$-core solutions is $q^{n^*-m^*}$, and thus $N=q^{n^*-m^*+O(\log n)}$, where $N$ is the number of clusters. By~\cite[Theorem 1.6]{ayre2020satisfiability} for $n^*/n$ and $m^*/m$, we get
\begin{equation}\label{eq:N}
    \frac{1}{n}\log N=(n^*/n-m^*/m)\log q+O(\log n/n)\to (\rho_{k,d}-d\rho_{k,d}^{k-1}+(d-d/k)\rho_{k,d}^k)\log q.
\end{equation}
Moreover, the cluster of a solution $x$ is the set of solutions $x+z$, were $z$ satisfies $Bz=0$ and $z_v=0$ for all 2-core variables $v$ that are not contained in a flipable cycle. Thus every cluster of solutions has the same size, and the size of a cluster can be found by dividing the  the number of solutions to $Bx=b$ by $N$. By Lemma \ref{indeprows}, the rows of $B$ are a.a.s.\ linearly independent, and so we can find an $m\times m$ invertible submatrix $D'$ of $B$ over $\FF_p$. This matrix is invertible over $\ZZ_q$ as $\det(D')\neq 0\pmod p$, so the number of solution to $Bx=b$ is $q^{n-m}$ a.a.s. Thus
\begin{equation*}
    \frac{1}{n}\log|C_i|=(1-d/k)\log q-\frac{1}{n}\log N\to(1-d/k-\rho_{k,d}+d\rho_{k,d}^{k-1}-(d-d/k)\rho_{k,d}^k)\log q.
\end{equation*}

Finally, when $Bx=b$ is a system of equations over a finite abelian group $G$, we can write $G$ as $G\cong \ZZ_{q_1}\times \ldots\times \ZZ_{q_h}$ for some prime powers $q_1,\ldots, q_h$. Let $\pi_i:G\to \ZZ_{q_i}$ be the projection map for each $i\in[h]$, let $N_i$ be the number of clusters of the projected system $B\pi_i(x)=\pi_i(b)$. Since each system $B\pi_i(x)=\pi_i(b)$ has the same 2-core, solutions $y,y'$ of $Bx=b$ are in the same cluster if and only if $\pi_i(y)=\pi_i(y')$ are in the same cluster for all $i\in[h]$. Therefore the number of clusters of $Bx=b$ is the product of the number of clusters for $B\pi_i(x)=\pi_i(b)$ over $i\in[h]$, and we get
\begin{equation*}
    \frac{1}{n}\log N=\sum_{i=1}^h \frac{1}{n}\log N_i\to (\rho_{k,d}-d\rho_{k,d}^{k-1}+(d-d/k)\rho_{k,d}^k)\sum_{i=1}^h\log q_i=(\rho_{k,d}-d\rho_{k,d}^{k-1}+(d-d/k)\rho_{k,d}^k)\log r.
\end{equation*}
 Moreover, the size of the cluster of some solution $\bar x$ is the product the the sizes of the clusters of $B\pi_i(x)=\pi_i(b)$ for $\pi_i(\bar x)$ over $i\in[h]$, and so
\begin{equation*}
    \frac{1}{n}\log|C_i|\to(1-d/k-\rho_{k,d}+d\rho_{k,d}^{k-1}-(d-d/k)\rho_{k,d}^k)\log r.\qed
\end{equation*}

\subsection{Proof of part \ref{thm:homogeneousclustering}}
Using the notation of Theorem~\ref{thm:sat3}, let $B= B(H_n(\pi,k,m))$ and let $b=b(H_n(\pi,k,m))$. A flippable set in the 2-core of $B$ is a set of columns $S$ in $B_{\text{2-core}}$ such that, for some $x,y$ with $B_{\text{2-core}}x=B_{\text{2-core}}y=b$, we have $x\Delta y=S$.
For any binary matirx $A$, let $m_*(A)\times n_*(A)$ be the dimensions of the 2-core of $A$.

\begin{lemma}\label{lem:deltaseparated} For all constant $\eta>0$ there exists a constant $\delta>0$ for which the following holds:  for all $d\ge d_k^\star+\eta$, a.a.s.\ every minimal flippable set in the 2-core of $B$ is a flippable cycle or has size at least $\delta n$.
\end{lemma}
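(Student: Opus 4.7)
The plan is to use a first moment argument on structural configurations in $B_{\text{2-core}}$. First, I would establish a structural reduction: for a minimal flippable set $S$ in $B_{\text{2-core}}$ with witness row set $T=\{a:\partial a\cap S\neq\emptyset\}$, every $v\in S$ has 2-core degree at least $2$, every $a\in T$ satisfies $|\partial a\cap S|\ge 2$ (otherwise any flip vector $z\in G^S$ must vanish on some $v\in S$), and if $(S,T)$ is not a simple bipartite cycle then $e(S,T)\ge |S|+|T|+1$. The last implication follows from minimality: if all degrees on both sides were exactly $2$, then $(S,T)$ would decompose into disjoint simple cycles $(S_i,T_i)$, and restricting the flip vector $z$ to each $S_i$ would yield a valid nonzero flip on $S_i$, contradicting minimality of $S$ unless $(S,T)$ is a single cycle.

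Next, for each triple $(s,t,\Delta)$ with $\Delta\ge 1$ and $3\le s\le \delta n$, I would bound the expected number of pairs $(S,T)$ in $B_{\text{2-core}}$ with $|S|=s$, $|T|=t$, $e(S,T)=s+t+\Delta$, the above degree conditions, and the existence of a valid flip vector $z\in (G\setminus\{0\})^S$ (of which there are at most $|G|^s$, with tighter rank-based bounds available when needed). Using the known distribution of the 2-core of a random $k$-uniform hypergraph (Lemma~\ref{indeprows} and~\cite{ayre2020satisfiability}), the standard first moment yields a bound of the form $\binom{n}{s}\binom{m}{t}\cdot(\text{edge structure factor})\cdot (C/n)^{s+t+\Delta}\cdot |G|^s$ for an absolute constant $C$. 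In the sublinear regime $s=o(n)$, the polynomial factor $n^{-\Delta}$ arising from the excess dominates the combinatorial entropy, producing a super-polynomially small total.

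The main obstacle is the intermediate regime where $s$ is linear in $n$ but $\Delta$ is only constant or sublinear: here the naive polynomial decay $n^{-\Delta}$ is insufficient against the exponential entropic factors. To handle this, I would exploit the fine structure of the 2-core at density $d\ge d_k^\star+\eta$: the 2-core behaves locally like a random $k$-uniform hypergraph conditioned on minimum column degree at least $2$, and for such hypergraphs subgraphs of size $\alpha n$ with positive excess are exponentially rare once $\alpha$ is sufficiently small (depending on $\eta$). Adapting the calculations of~\cite{achlioptas2015solution} and~\cite{ayre2020satisfiability} to account for flip vectors over $G$ in place of $\FF_2$-rank arguments (using the upper bound $|G|^s$), I would show that the contribution of linear-sized configurations with $|S|\le \delta n$ decays exponentially in $n$ for $\delta=\delta(\eta)$ chosen sufficiently small. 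Combining the sublinear and linear analyses yields $\sum_{s=3}^{\delta n}\mathbb{E}[N_s]=o(1)$, so that a.a.s.\ no minimal non-cycle flippable set of size at most $\delta n$ exists in $B_{\text{2-core}}$.
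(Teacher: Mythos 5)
Your proposal follows essentially the same route as the paper, which for this lemma simply cites \cite[Lemma 51]{achlioptas2015solution} and notes that the same first-moment calculation gives a $\delta$ uniform over $d\ge d_k^\star+\eta$. Your structural reduction (2-core degree $\ge 2$ on columns, $|\partial a\cap S|\ge 2$ on witness rows, excess $\ge 1$ unless the structure is a single cycle via the restriction-of-$z$ argument) is the right reduction, and the two-regime first-moment argument (polynomial decay from excess when $s=o(n)$, exponential rarity of linear-sized low-excess closed subsets of the 2-core for $\delta$ small depending on $\eta$) is exactly what the cited lemma does.

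One small but worth-flagging point: the $|G|^s$ factor you insert ``to account for flip vectors'' is unnecessary and, in the linear regime $s=\Theta(n)$ with constant excess $\Delta$, it would genuinely hurt if you actually carried it through. You only need to bound the expected number of \emph{structures} $(S,T)$ satisfying the degree and excess conditions --- the existence of some nonzero $z\in G^S$ with $Bz|_T=0$ is merely a necessary condition for $S$ to be flippable, not something to enumerate in the union bound. Dropping the $|G|^s$ factor is exactly what makes the argument \emph{identical} to the $\FF_2$ case: the combinatorial conditions on $(S,T)$ are group-independent (indeed the ``$\ge 2$'' condition for general $G$ is even a bit weaker than the even-parity condition over $\FF_2$, but the AM calculation already only uses ``$\ge 2$'' and excess), which is precisely why the paper can defer to \cite{achlioptas2015solution} with no change. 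Once you remove that factor, your sketch is sound and matches the paper's intent.
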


\proof This lemma follows by exactly the same proof as~\cite[Lemma 51]{achlioptas2015solution}. Although their statement is slightly different, which fixed $d>d_k^\star$ first and then chooses $\delta$ afterwards. However, the same proof actually shows that a universal $\delta$ can be chosen as long as $d-d_k^\star$ is bounded by $\eta$ from below. \qed
\smallskip


By the subsubsequence principle, we may assume that $km/n\to d$ for some constant $d$. 
By the theorem hypothesis $d\ge d_k+k\eps$.
Let $\eta=d_k-d_k^\star$ and let $\delta=\delta(\eta)$ be given by Lemma~\ref{lem:deltaseparated}.  By Lemma~\ref{lem:deltaseparated}, every minimal flippable set in the 2-core of $B$ is a flippable cycle or has size at least $\delta n$.
By Theorem 1.6 in \cite{ayre2020satisfiability}, $n_*(B)/n\to \pi_k(d)$ in probability if $m\sim dn/k$, where
$\pi_k:(0,\infty)\to \RR$ is given by
\begin{equation}
    \pi_k(d)=\rho_{k,d}-d\rho_{k,d}^{k-1}+d\rho_{k,d}^k.
\end{equation}

Note that the function $\pi_k$ is monotonically increasing for $d>d_k^\star$, with $\pi_k(d)<1$ for all $d>d_k^\star$ and $\lim_{d\to\infty} \pi_k(d)=1$. The implicit function theorem shows that $\pi_k$ is continuous on $(d_k^\star,\infty)$.

\begin{claim}\label{claim:2coreinduction}
    Let $i\geq 1$ be fixed. If $d\geq d_k+k\eps$ is such that $\pi_k(d_k)+(i-1)\delta/2\leq\pi_k(d)<\pi_k(d_k)+i\delta /2$, then a.a.s.\ every solution to $Bx=b$ is cycle equivalent to $\sigma^n$ for some $\sigma\in Q_\pi$.
\end{claim}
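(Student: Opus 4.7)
\noindent\textbf{Proof proposal for Claim~\ref{claim:2coreinduction}.} The plan is an induction on $i$ that exploits the ``separation gap'' from Lemma~\ref{lem:deltaseparated}: any solution not cycle-equivalent to a constant $\sigma^n$ with $\sigma\in Q_\pi$ must differ from that constant on at least $\delta n$ non-flippable-cycle 2-core variables. First I would invoke Theorems~\ref{symthm} and~\ref{cor:reducible} to reduce to the case that $Bx=b$ is a system of equations over a group $(\Omega,+)$ (the hypothesis on $\supp(\pi)$ ensures this, and $Q_\pi\neq\emptyset$ forces $\pi$ to be supported on a single $\psi_{G,0}$). Then for any solution $x$ and $\sigma\in Q_\pi$, the string $x-\sigma^n$ lies in $\ker B$, so its support $T_\sigma(x)=\{v:x_v\neq\sigma\}$ is a flippable set. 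If $x$ is not cycle-equivalent to $\sigma^n$, then $T_\sigma(x)\cap V^*\neq\emptyset$, and since any minimal flippable set in the 2-core is either a flippable cycle or of size at least $\delta n$ (Lemma~\ref{lem:deltaseparated}), we get $|T_\sigma(x)\cap V^*|\geq \delta n$. I would also record that by Lemma~\ref{lem:cyclebound} and the continuity of $\pi_k$ on $(d_k^\star,\infty)$, the set $V^*=V^*(d)$ satisfies $|V^*(d)|=\pi_k(d)n+o(n)$ a.a.s.

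\emph{Inductive step ($i\geq 2$).} Assume the claim holds for all $i'<i$. Fix $d$ in the $i$-th range and choose $d'<d$ with $\pi_k(d')=\pi_k(d)-\delta/2$; then $\pi_k(d')$ lies in the $(i-1)$-th range. Provided $d'\geq d_k+k\eps$, couple $H_n(\pi,k,m)$ with $H_n(\pi,k,m')$ (where $m'=d'n/k$) by taking the $m'$ constraints at density $d'$ to be the first $m'$ constraints at density $d$. Any solution $x$ at density $d$ is also a solution at density $d'$, so by the inductive hypothesis $x$ is cycle-equivalent at density $d'$ to some $\sigma^n$. Hence $T_\sigma(x)\cap V^*(d')=\emptyset$, and so $T_\sigma(x)\cap V^*(d)\subseteq V^*(d)\setminus V^*(d')$. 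Since $V^*(d')\subseteq V^*(d)$ up to $o(n)$ exceptions (adding constraints only grows the 2-core and may only destroy flippable cycles, whose total size is $o(n)$ a.a.s.\ by Lemma~\ref{lem:cyclebound}), this yields
\begin{equation*}
|T_\sigma(x)\cap V^*(d)|\leq |V^*(d)|-|V^*(d')|+o(n)=(\pi_k(d)-\pi_k(d'))n+o(n)=\delta n/2+o(n)<\delta n,
\end{equation*}
contradicting the lower bound above. Thus $x$ is cycle-equivalent to $\sigma^n$ at density $d$.

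\emph{Base case.} When $d$ is close enough to $d_k+k\eps$ that the $d'$ constructed above would fall below $d_k+k\eps$, a direct argument is needed. In this regime $\pi_k(d)<\pi_k(d_k+k\eps)+\delta/2$, so $d$ ranges over a bounded interval just above $d_k+k\eps$. Here I would run a first-moment calculation directly on the 2-core, counting the expected number of ``escape'' 2-core solutions---namely, those $x$ satisfying $|T_\sigma\cap V_{\text{2-core}}|\geq \delta n$ for every $\sigma\in Q_\pi$---and show it is $o(1)$. This uses the entropy–probability balance from the proof of Theorem~\ref{thm:sat3}: a type $\tau$ that is $\delta$-far from every $\delta_\sigma$ with $\sigma\in Q_\pi$ incurs a strictly positive energy excess per constraint, and over the $\Omega(n)$ independent constraints that touch the 2-core this beats the entropy of the types, giving an exponentially small expectation. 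The main obstacle is precisely this base case: rather than a single first-moment bound, one may need a large-deviation calculation tailored to the constrained type class and to the randomness of the constraint functions drawn from $\pi$ (in order to exhibit the strict excess uniformly over all ``escape'' types). The monotone coupling in the inductive step, and the matching of the $o(n)$ slack against the $\delta n/2$ gap, are routine by comparison.
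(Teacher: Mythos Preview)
Your inductive step is essentially the paper's argument: drop to a density $d'$ one ``$\delta/2$-band'' below, apply the inductive hypothesis there, and use the $\delta n$ separation from Lemma~\ref{lem:deltaseparated} together with the bound $|V^*(d)\setminus V^*(d')|<\delta n$ to rule out any solution not cycle-equivalent to a constant. The paper chooses $d'$ in the $(i-1)$-st band (so the gap is at most $3\delta/4$) rather than hitting $\pi_k(d)-\delta/2$ exactly, but this is cosmetic.

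The base case is where your proposal is incomplete, and the paper's mechanism is different from the ``direct first-moment on the 2-core'' you sketch. A direct first moment at density $d>d_k$ runs into two problems you have not addressed: the entropy of ``escape'' types is genuinely $\Theta(n)$, and the constraints lying in the 2-core are \emph{not} independent of the 2-core structure, so there is no clean product bound. The paper sidesteps both issues by reusing the splitting trick from the proof of Theorem~\ref{thm:sat3}: take $m'=(d_k/k-\eps_n)n$ and let $B'$ be the first $m'$ rows and $B''$ the remaining rows. Then (i) by Lemma~\ref{indeprows} the 2-core of $B'$ has only $q^{O(\eps_n n)}$ solutions, and (ii) the rows of $B''$ whose support lies inside the columns $C_n$ of $B'_{\text{2-core}}$ are \emph{independent} of $B'$ and number $\Omega(\eps n)$. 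A non-trivial solution (one $\delta$-far from every $\sigma^n$, $\sigma\in Q_\pi$) fails each such row with probability bounded below by a constant, and the union bound $q^{O(\eps_n n)}(1-\alpha)^{\Omega(\eps n)}=o(1)$ kills all of them. The resulting cycle-equivalence is with respect to the 2-core of $B'$, and a final transfer step---using $|n_*(B)-n_*(B')|<3\delta n/4$ together with (P1), (P2)---upgrades it to cycle-equivalence with respect to the 2-core of $B$. This two-stage structure (subexponential count at $B'$, independent extra rows from $B''$, then transfer) is the missing idea in your base case; the large-deviation calculation you anticipate is not needed.
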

\begin{proof}[Proof of Claim~\ref{claim:2coreinduction}]
    We prove by induction on $i$. Suppose $i=1$, and suppose $d>0$ is such that $\pi_k(d_k)\le \pi_k(d)< \pi_k(d_k)+\delta/2$. Let $\eps_n$ be chosen to satisfy Lemma~\ref{indeprows}, and let $m'=(d_k/k-\eps_n)n$ and $m''=m-m'$. Let $B'$ be the submatrix consisting of the first $m'$ rows of $B$, and let $B''$ be the submatrix consisting of the last $m''$ rows of $B$. Let $R_n$ and $C_n$ be the rows and columns of $B'_{\text{2-core}}$ respectively, and let $R'_n$ be the set of rows of $B''$ whose entries are zero outside of $C_n$. Let $\Sigma$ be the set of solutions to $B_{\text{2-core}}'\, x=b\mid_{R_n}$. These are exactly the same definitions as in the proof of Theorem~\ref{thm:sat3}. Thus, we immediately have that a.a.s.\ $|R'_n|=\Omega(m'')$ by~\eqref{eq:size-of-R'}, and $|\Sigma|=q^{O(\eps_n n)}$ by Claim~\ref{claim:top-system}.
    
    We call a solution $x$ of $B'_{\text{2-core}}x=b\mid_{R_n}$ non-trivial if for all $\sigma\in Q_{\pi}$,
    $$
    |\{v\in C_n: x_v\neq \sigma\}|\geq \delta n .
    $$
    The following claim is an analog of Claim~\ref{claim:bottom-system}, with nearly identical proof, which we will briefly sketch.
    \begin{claim}\label{claim:bottom}
        Condition on $R_n'$. There is a $\alpha>0$ such that, for all rows $i\in R_n'$ and non-trivial $x$,
    \end{claim}
    \begin{equation*}
        \pr[B\mid _{\{i\}\times C_n}x=b_i]\leq 1-\alpha
    \end{equation*}
    We complete the proof for $i=1$ assuming Claims~\ref{claim:top-system} and \ref{claim:bottom}. Similarly as before, rows in $R'_n$ are independent. By~\eqref{eq:size-of-R'} and Claim~\ref{claim:bottom}, for every non-trivial $x$,
\begin{equation*}
    \pr[B|_{R'_n\times C_n}\, x = b|_{R'_n}]\le (1-\alpha)^{\Omega(m'')} \le \exp(- \Omega(\alpha(\eps+\eps_n)n)).
\end{equation*}

We may assume that $|\Sigma|=q^{O(\eps_n n)}$ by Claim~\ref{claim:top-system}. Then, the probability that there exists a nontrivial $x\in \Sigma$ such that $B|_{R'_n\times C_n}\, x = b|_{R'_n}$ is $q^{O(\eps_n n)} \exp(- \Omega(\alpha(\eps+\eps_n)n))=o(1)$. Therefore a.a.s.\ all solutions $x$ of $Bx=b$ have a trivial restriction $x\mid_{C_n}$ to $C_n$. Therefore, by Lemma \ref{lem:deltaseparated}, all solutions $x$ of $Bx=b$ are cycle equivalent, with respect to the $2$-core of $B'$, to $\sigma^n$ for some $\sigma\in Q_{\pi}$. That is, $x$ and $\sigma^n$ agree on all the variables in the 2-core of $B'$ (instead of $B_{\text{2-core}}$), except on some variables contained in a flippable cycle of $B'_{\text{2-core}}$.

Next we show that a.a.s.\ all solutions of $Bx=b$ are cycle equivalent, with respect to the 2-core of $B$, to $\sigma^n$ for some $\sigma\in Q_{\pi}$. By Lemmas~\ref{lem:cyclebound} and \ref{lem:deltaseparated}, and the assumption that $\pi_k(d)<\pi_k(d_k)+\delta/2$ we may assume that $(B, B')$ satisfies the following properties, which holds with probability $1-o(1)$: 
\begin{itemize}
    \item[(P1)] the number of vertices in flippable cycles of the 2-core is $o(n)$;
    \item[(P2)] every minimal flippable set in the 2-core of $B$ is a flippable cycle or has size at least $\delta n$;
    \item[(P3)] $n_*(B)/n< \pi_k(d_k)+\delta/2$, and that $n_*(B')/n>\pi_k(d_k)-\delta/4$.
\end{itemize}
 Let $\hat x$ satisfy $B\hat x=b$ and suppose for a contradiction that $\hat x$ is not cycle equivalent to any $\sigma^n$, with respect to the 2-core of $B$, where $\sigma\in Q_{\pi}$. Then, by (P2), $\hat x$ differs from each $\sigma^n$, where $\sigma\in Q_{\pi}$, on at least $\delta n$ variables in the 2-core of $B$. Moreover, $n_*(B)/n-n_*(B')/n\le 3\delta n/4$ by (P3), and the variables of the 2-core of $B'$ is a subset of the variables in the 2-core of $B$. It follows then that $\hat x$ must differ from each $\sigma^n$, where $\sigma\in Q_{\pi}$, on at least $\delta n/4$ variables of the 2-core of $B'$ a.a.s. By (P1), the solution $\hat x$ is not cycle equivalent to a constant solution on the 2-core of $B'$. This is a contradiction since we have shown that a.a.s.\ all solutions of $Bx=b$ are cycle equivalent, with respect to the 2-core of $B'$, to a constant solution. Hence, $\hat x$ is cycle equivalent, with respect to $B_{\text{2-core}}$, to $\sigma^n$ for some $\sigma\in Q_{\pi}$.

\begin{proof}[Proof of Claim~\ref{claim:bottom}]
        Let $x\in \Omega^n$ be non-trivial and let $i$ be a row in $R_n'$. Similar to~\eqref{eq:bad}, we show 
        \begin{equation}
        \pr[B\mid_ix\neq b_i]=\Omega(1).\label{eq:bad2}
        \end{equation}
         Let $(v_1,\ldots, v_k)=\partial i$. Let $\sigma\in \Omega$ be such that $|\{v\in C_n: x_v= \sigma\}|\geq |C_n|/r$. We consider two cases.

First we suppose that there is a $\psi\in \supp(\pi)$ such that $\sigma^k$ does not satisfy $\psi$. 
        By the same argument leading to~\eqref{eq:bad}, we immediately have~\eqref{eq:bad2}.
        
        Now suppose that $\sigma^k$ satisfies all $\psi\in \supp(\pi)$. Since $x$ is non-trivial, we have $|\{v\in C_n: x_v\neq \sigma\}|\geq \delta  n$.  Since each $\psi\in \supp(\pi)$ is UE and is satisfied by $\sigma^k$, we know that $(\sigma^{k-1},\sigma')$ does not satisfy $\psi$ for any $\sigma'\neq \sigma$. Therefore,
        \begin{equation*}
            \pr[B\mid_ix\neq b_i]\geq \pr[|\{j\in[k]:x_{v_j}\neq \sigma\}|=1]=\Omega(1)
        \end{equation*}
        where the last eqaulity above holds because $|\{v\in C_n:x_v=\sigma\}|\geq |C_n|/q$ and $|\{v\in C_n: x_v\neq \sigma\}|\geq \delta n$. This verifies~\eqref{eq:bad2}.
    \end{proof}

For the induction step, suppose that $i\ge 2$ and that the claim holds for smaller values of $i$. If $\pi_k(d_k)+(i-1)\delta/2\geq 1$, then there is no $d>0$ that satisfies the hypothesis and the claim holds vacuously. Suppose $\pi_k(d_k)+(i-1)\delta/2<1$ and suppose that $d>0$ is such that $\pi_k(d_k)+(i-1)\delta/2<\pi_k(d)<\pi_k(d_k)+i\delta/2$. Since $i\ge 2$, there is a $d'>d_k$ such that $\pi_k(d_k)+(i-3/2)\delta/2<\pi_k(d')<\pi_k(d_k)+(i-1)\delta/2$. Let $B'$ be the submatrix of $B$ consisting of the first $d'n/k$ rows of $B$ and let $b'$ be the vector consisting of the first $d'n/k$ entries of $b$. By the induction hypothesis, a.a.s.\ every solution to $B'x=b'$ is cycle equivalent, with respect to $B'_{\text{2-core}}$, to $\sigma^n$ for some $\sigma\in Q_{\pi}$.

As in the base case, we may assume that $(B, B')$ has properties (P1), (P2) and
\begin{enumerate}
    \item[(P3')]  $n_*(B)/n< \pi_k(d_k)+i\delta/2$, and that $n_*(B')/n>\pi_k(d_k)-(i-3/2)\delta/2$.
\end{enumerate}

Let $\hat x$ satisfy $B\hat x=b$ and suppose for a contradiction that $\hat x$ is not cycle equivalent, with respect to the 2-core of $B$,  to any $\sigma^n$, where $\sigma\in Q_\pi$ . Then by (P2), $\hat x$ differs from each $\sigma^n$, $\sigma\in Q_\pi$, on at least $\delta n$ variables in the 2-core of $B$. But by (P3'), $\hat x$ must differ from any $\sigma^n$, $\sigma\in Q_\pi$, on at least $\delta n/4$ variables of the 2-core of $B'$ a.a.s. By (P1), the solution $\hat x$ is not cycle equivalent, with respect to the 2-core of $B'$, to any constant solution, which contradicts with the inductive hypothesis. Hence, $\hat x$ is cycle equivalent, with respect to $B_{\text{2-core}}$, to some $\sigma^n$ where $\sigma\in Q_\pi$.

As in part b), the clusters satisfy (b.iii) and (b.iv) by the same proof as~\cite[Theorems 2 and 3]{achlioptas2015solution}. To show that (b.ii) holds, we cannot count the number of solutions to $Bx=b$ as we did in part b) since the rows of $B$ are no longer a.a.s.\ linearly independent. Instead, we prove the following claim, which counts the number of ways that a solution to $B_{\text{2-core}}\,x=b\mid_{R_n}$ can be extended to a solution to $Bx=b$. The proof of Claim~\ref{claim:2-coreext} follows a similar argument to the proof of~\cite[Theorem 2]{achlioptas2015solution}, which we briefly sketch below.

\begin{claim}\label{claim:2-coreext}
    Let $x$ be such that $B_{\text{2-core}}\,x=b\mid _{R_n}$. Then the number of solutions $y$ to $By=b$ with $y\mid_{C_n}=x$ is $r^{n-m-n_*(B)+m_*(B)}$.
\end{claim}
\begin{proof}
    We prove by induction on $n-n_*(B)$. If $n=n_*(B)$, then $B=B_{\text{2-core}}$ and $m=m_*(B)$, so $r^{n-m-n_*(B)+m_*(B)}=1$.

    Suppose $n>n_*(B)$. Then there is a column $v$ of $B$ such that $v$ has at most one nonzero entry. Let $B'$ be the matrix obtained by removing the column $v$ from $B$ along with any row that contains a non-zero entry in column $v$. We then have $B'_{\text{2-core}}=B_{\text{2-core}}$. We consider two cases.

    First suppose that $v$ has no nonzero entries. Then $B'$ is an $m\times (n-1)$ matrix, so by the induction hypothesis there are $r^{(n-1)-m-n_*(B)+m_*(B)}$ solutions to $B'y'=b$ such that $y'\mid_{C_n}=x$. Every such $y'$ can be extended to a solution to $By=b$ by assigning $v$ to any value in $\Omega$, so the number of $y$ such that $By=b$ and $y\mid_{C_n}=x$ is $r^{n-m-n_*(B)+m_*(B)}$.

    No suppose that $v$ has exactly one nonzero entry in row $t$, and let $b'$ be the vector $b$ without the $t$-th entry.  Then $B'$ is an $(m-1)\times (n-1)$ matrix, so by the induction hypothesis there are $r^{n-m-n_*(B)+m_*(B)}$ solutions to $B'y'=b'$ such that $y'\mid_{C_n}=x$. For each such $y'$, since the $t$-th constraint is UE, there is a unique assignment of the variable $v$ that extends $y'$ to a solution $y$ of $By=b$. Therefore there are $r^{n-m-n_*(B)+m_*(B)}$ solutions to $By=b$ such that $y\mid_{C_n}=x$. 
\end{proof}
We now show that (b.ii) holds using Claim~\ref{claim:2-coreext}. By Lemma~\ref{lem:cyclebound}, the number of ways to assign the variable in flippable cycles of $B_{\text{2-core}}$ is $r^{o(n)}$ a.a.s. By Claim~\ref{claim:2-coreext}, each 2-core solution (i.e.\ each solution to $B_{\text{2-core}}\, x= b|_{R_n}$) can be extended to $r^{n-m-n_*(B)+m_*(B)}$ solutions of $Bx=b$. Therefore the number of solutions in a cluster $C_i$ satisfies
\begin{equation*}
    \frac{1}{n}\log |C_i|=\left(1-d/k-\frac{n_*(B)}{n}+\frac{m_*(B)}{n}\right)\log r+o(1) \quad a.a.s.
\end{equation*}
Using~\cite[Theorem 1.6]{ayre2020satisfiability} for $n_*(B)/n$ and $m_*(B)/n$, we get
\begin{equation*}
    \frac{1}{n}\log |C_i|\to(1-d/k-\rho_{k,d}+d\rho_{k,d}^{k-1}-(d-d/k)\rho_{k,d}^k)\log r.
\end{equation*}
Therefore (b.ii) holds.
\end{proof}

    \section{Characterization of group constraint functions for $k\geq 3$}\label{sec:symmetricthm}

    Recall that $\Omega$ is a set of $r$ spins, $k\ge 3$ is an integer, and $\Lambda_{k,r}$ denotes the set of all $k$-ary uniquely extendable constraint functions over $\Omega$. Recall also that for $\psi\in \Lambda_{k,r}$,  $f_\psi:\Omega^{k-1}\to \Omega$ denotes the unique function such that $\sigma f_\psi(\sigma)\models \psi$ for all $\sigma\in \Omega^{k-1}$.

\begin{lemma} \label{psiiUE}
    Let $\Psi\subseteq \Lambda_{k,r}$. The following hold for all $i\geq 0$.
    \begin{enumerate}[(a)]
        \item \label{psiiUE:1} Let $f\in F_i(\Psi)$, and $d\in [(k-1)^i]$. For all $\sigma_1\in \Omega^{d-1}$, $\sigma_2\in \Omega^{(k-1)^i-d}$, the function $\tau \mapsto f(\sigma_1\tau\sigma_2)$, $\tau\in\Omega$ is bijective.
        \item \label{psiiUE:2}Let $\psi\in \Psi$ and let $f_1,\ldots,f_k\in F_i(\Psi)$. Then the constraint $\psi'$ defined by
        \begin{equation*}
            \sigma_1\ldots \sigma_k\models \psi'\iff f_1(\sigma_1)\ldots f_k(\sigma_k)\models \psi,\quad \sigma_1,\ldots,\sigma_k\in \Omega^{(k-1)^i}
        \end{equation*}
        is uniquely extendable.
    \end{enumerate}
\end{lemma}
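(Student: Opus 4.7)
The plan is to prove parts (a) and (b) by simultaneous induction on $i$. The base case $i=0$ is essentially immediate: $F_0(\Psi)$ consists only of the identity map on $\Omega$, so (a) reduces to the observation that $\tau \mapsto \tau$ is bijective, and in (b) the constraint $\psi'$ is literally $\psi$, which is UE by hypothesis.

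For the inductive step, I would assume both (a) and (b) hold at level $i-1$ and establish them at level $i$. For (a), take $f \in F_i(\Psi)$ of the form $f(\sigma_1 \ldots \sigma_{k-1}) = f_\psi(g_1(\sigma_1), \ldots, g_{k-1}(\sigma_{k-1}))$ with $\psi \in \Psi$ and $g_1, \ldots, g_{k-1} \in F_{i-1}(\Psi)$, where each $\sigma_l \in \Omega^{(k-1)^{i-1}}$. Given $d \in [(k-1)^i]$, write $d = (j-1)(k-1)^{i-1} + d'$ with $j \in [k-1]$ and $d' \in [(k-1)^{i-1}]$, so the varying coordinate $\tau$ sits inside the $j$-th block at sub-position $d'$. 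By the inductive hypothesis applied to $g_j$, the map $\tau \mapsto g_j(\eta_1 \tau \eta_2)$ is a bijection on $\Omega$, where $\eta_1, \eta_2$ are the fixed sub-strings flanking $\tau$ in the $j$-th block. It then remains to show that, with the other $k-2$ inputs of $f_\psi$ held fixed at $a_l = g_l(\sigma_l)$ for $l \neq j$, the one-argument map $s \mapsto f_\psi(a_1, \ldots, a_{j-1}, s, a_{j+1}, \ldots, a_{k-1})$ is itself a bijection on $\Omega$. This is where UE of $\psi$ at position $j$ enters: if distinct $s \neq s'$ produced the same output $c$, then both $(a_1, \ldots, s, \ldots, a_{k-1}, c)$ and $(a_1, \ldots, s', \ldots, a_{k-1}, c)$ would satisfy $\psi$, contradicting UE at position $j$. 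Finiteness of $\Omega$ upgrades injectivity to bijectivity, and composing the two bijections yields (a).

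Part (b) then follows directly from (a) at level $i$. The constraint $\psi'$ is $k(k-1)^i$-ary; to verify UE, I would fix all but one coordinate of the concatenation $\sigma_1 \ldots \sigma_k$, say the coordinate at sub-position $m$ inside $\sigma_j$, and show the remaining value is uniquely determined. Writing $\sigma_j = \sigma_j' \tau \sigma_j''$ with $\sigma_j'$ and $\sigma_j''$ fixed, part (a) gives that $\tau \mapsto f_j(\sigma_j' \tau \sigma_j'')$ is a bijection on $\Omega$. UE of $\psi$ at position $j$ pins down a unique value for $f_j(\sigma_j' \tau \sigma_j'')$ compatible with the fixed outputs $f_l(\sigma_l)$ for $l \neq j$, and bijectivity then pulls this back to a unique $\tau$.

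The main technical point is the bijectivity claim for varying a single non-last argument of $f_\psi$, since the defining UE property of $\psi$ is phrased in terms of completing the $k$-th argument rather than varying the $j$-th. Once this contrapositive-style consequence of UE is isolated, both parts of the lemma assemble cleanly by composing bijections and invoking UE of $\psi$, so I expect no further obstacle.
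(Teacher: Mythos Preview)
Your proposal is correct and follows essentially the same approach as the paper: prove (a) by induction on $i$ via decomposing $f$ into an inner bijection (from the inductive hypothesis) composed with an outer bijection (from unique extendability of $\psi$), and then derive (b) from (a). The only cosmetic difference is that the paper treats $i=1$ as a separate base case and invokes it for the outer map in the step $i\ge 2$, whereas you fold that same UE argument directly into the inductive step; also, the paper argues surjectivity of the outer map while you argue injectivity plus finiteness, which is equivalent on a finite $\Omega$.
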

\begin{proof}
    Since $\psi$ is uniquely extendable, \ref{psiiUE:2} follows from \ref{psiiUE:1}. We prove \ref{psiiUE:1} by induction on $i$. The case for $i=0$ is clear. For $i=1$, let $f\in F_1(\Psi)$.
   By definition, $f=f_{\psi}$ for some $\psi\in \Psi$.
    Let $d\in [k-1]$, let $\sigma_1\in \Omega^{d-1}$, and let $\sigma_2\in \Omega^{k-1-d}$.  For any $\eta\in \Omega$, by unique extendability of $\psi$ there is a unique $\tau \in\Omega$ such that $\sigma_1\tau \sigma_2\eta\models \psi$. Thus there is a unique $\tau$ such that $f(\sigma_1\tau \sigma_2)=f_{\psi}(\sigma_1\tau \sigma_2)=\eta$ for each $\eta\in \Omega$. Therefore  $\tau\mapsto f(\sigma_1\tau \sigma_2)$ is a bijection.

    Now suppose $i\ge 2$ and that the claim holds for all smaller values of $i$. Let $f\in F_i(\Psi)$. Then, by definition,
    \begin{equation*}
        f(\eta_1\ldots \eta_{k-1})=f_{\psi}\bigl (f_1(\eta_1)\ldots f_{k-1}(\eta_{k-1})\bigr ),\quad\eta_1\ldots \eta_{k-1}\in \Omega^{(k-1)^{i-1}},
    \end{equation*}
    for some $\psi\in\Psi$ and some $f_1,\ldots, f_{k-1}\in F_{i-1}(\Psi)$.
    Let $d\in [(k-1)^i]$, let $\sigma_1\in \Omega^{d-1}$, and let $\sigma_2\in \Omega^{(k-1)^i-d}$.

    Let $\pi_{1},\ldots,\pi_{\ell}\in \Omega^{(k-1)^{i-1}}$ and $\pi$ be such that $\pi_1\ldots \pi_{\ell_1}\pi=\sigma_1$ and let $\eta_{1},\ldots,\eta_{\ell_2}\in \Omega^{(k-1)^{i-1}}$ and $\eta$ be such that $\eta\eta_{1}\ldots \eta_{\ell_2}=\sigma_2$. By the induction hypothesis,
    \begin{align}
        &\tau \mapsto f_{\ell_1+1}(\pi\tau \eta),\label{psiiUE:f1}\\
        &\tau \mapsto f_0\bigl (f_1(\pi_{1})\ldots f_{\ell_2}(\pi_{\ell_1})\tau f_{\ell_1+2}(\eta_{1})\ldots f_{k-1}(\eta_{\ell_2})\bigr )\label{psiiUE:f2}
    \end{align}
    are bijective. Since $\tau\mapsto f(\sigma_1\tau\sigma_2)$ is the composition of the two functions (\ref{psiiUE:f1}) and (\ref{psiiUE:f2}), this function is also bijective.
\end{proof}

Recall that a $k$-ary function $f$ on a set $\Omega$ is symmetric if $f(\sigma_1,\ldots,\sigma_k)=f(\sigma_{\pi(1)},\ldots,\sigma_{\pi(k)})$ for all $\pi\in \Pi([k])$ and $\sigma_1,\ldots,\sigma_k\in \Omega$.

\begin{lemma}\label{psiicom}
   Suppose $\Psi\subseteq \Lambda_{k,r}$ is such that every $\psi\in\Psi$ is commutative and every function in $F_2(\Psi)$ is symmetric. Then for all $i\geq 0$, every function in $F_i(\Psi)$ is symmetric. 
\end{lemma}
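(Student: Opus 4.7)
The plan is to argue by induction on $i$, with base cases $i\in\{0,1,2\}$. The case $i=0$ is trivial; for $i=1$, the commutativity of each $\psi\in\Psi$ together with its unique extendability forces $f_\psi$ to be symmetric; and the case $i=2$ is the hypothesis.

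For the inductive step I would assume $F_i(\Psi)$ is symmetric for some $i\ge 2$ and take $g\in F_{i+1}(\Psi)$, expanded down two levels as
\begin{equation*}
    g(\sigma_1,\ldots,\sigma_{k-1})=f_\psi\bigl(f_{\psi_1}(f_{1,1}(\eta_1^1),\ldots,f_{1,k-1}(\eta_{k-1}^1)),\ldots,f_{\psi_{k-1}}(f_{k-1,1}(\eta_1^{k-1}),\ldots,f_{k-1,k-1}(\eta_{k-1}^{k-1}))\bigr),
\end{equation*}
where $\psi,\psi_s\in \Psi$, $f_{s,t}\in F_{i-1}(\Psi)$, and $\sigma_s=\eta_1^s\cdots\eta_{k-1}^s$ with $\eta_t^s\in \Omega^{(k-1)^{i-1}}$. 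The crucial observation is that the outer two levels assemble into an element $H\in F_2(\Psi)$, defined on a $(k-1)\times (k-1)$ grid of variables by
\begin{equation*}
    H\bigl((x_{s,t})_{s,t\in[k-1]}\bigr):=f_\psi\bigl(f_{\psi_1}(x_{1,1},\ldots,x_{1,k-1}),\ldots,f_{\psi_{k-1}}(x_{k-1,1},\ldots,x_{k-1,k-1})\bigr).
\end{equation*}
By hypothesis $H$ is symmetric, so evaluating $H$ at the row-major filling $x_{s,t}:=f_{s,t}(\eta_t^s)$ and at its transpose (both equal to $g$) and then reading off the defining formula of $H$ in the transposed order rewrites $g$ as
\begin{equation*}
    g(\sigma_1,\ldots,\sigma_{k-1})=f_\psi\bigl(G_1(\eta_1^1,\ldots,\eta_1^{k-1}),\ldots,G_{k-1}(\eta_{k-1}^1,\ldots,\eta_{k-1}^{k-1})\bigr),
\end{equation*}
where $G_t(\eta_t^1,\ldots,\eta_t^{k-1}):=f_{\psi_t}\bigl(f_{1,t}(\eta_t^1),\ldots,f_{k-1,t}(\eta_t^{k-1})\bigr)$ again lies in $F_i(\Psi)$ and so is symmetric by the inductive hypothesis.

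With the original ``row'' representation $g=f_\psi\circ(f_s)_s$ and the new ``column'' representation $g=f_\psi\circ(G_t)_t$ both in hand, it remains to verify that the two resulting families of symmetries jointly generate the full permutation group of the $(k-1)^{i+1}$ coordinates of $(\sigma_1,\ldots,\sigma_{k-1})$. I would index each coordinate by a triple $(s,t,q)\in[k-1]\times[k-1]\times[(k-1)^{i-1}]$: symmetry of $f_s$ permits arbitrary permutations of the $(t,q)$ slots at fixed $s$, while symmetry of $G_t$ permits arbitrary permutations of the $(s,q)$ slots at fixed $t$. Transpositions within a single row or column are then immediate, and a transposition $\bigl((s_1,t_1,q_1),(s_2,t_2,q_2)\bigr)$ with $s_1\ne s_2$ and $t_1\ne t_2$ is realised via the conjugation identity $(a,b)=(b,c)(a,c)(b,c)$ with pivot $c:=(s_1,t_2,q_1)$, where $(a,c)$ is a row move in $\sigma_{s_1}$ and $(b,c)$ is a column move in block $t_2$. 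Since transpositions generate the symmetric group, $g$ is symmetric, completing the induction.

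The main obstacle is the middle step: one must recognise that the hypothesised $F_2$-symmetry is precisely what licenses the row/column swap at the top two levels of the nested composition defining $g$, producing a second decomposition whose building blocks still lie in $F_i(\Psi)$ and are therefore covered by the inductive hypothesis. Once this rewriting is available, the remaining combinatorial verification that the row and column symmetries generate the full symmetric group is straightforward.
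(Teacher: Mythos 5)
Your proof is correct, and it takes a genuinely different and, I think, cleaner route than the paper's. Both arguments hinge on the same $F_2$-element: your $H$ is exactly the function $f'$ that the paper isolates from the top two levels of the nested composition defining $f\in F_i(\Psi)$. The difference lies in how that symmetry is exploited. The paper applies the symmetry of $f'$ surgically, one transposition at a time: it splits into the cases $r_1=r_2$ and $r_1<r_2$, uses commutativity of the middle-level functions $f_{\psi_{r_1}},f_{\psi_{r_2}}$ to bring the two spins to be swapped to the front of their blocks, invokes symmetry of $f'$ to exchange them across blocks, and then uses an $F_{i-1}$-level inductive symmetry to finish, with careful bookkeeping of the string positions $\pi_j^\pm,\tau_j,\eta_j$. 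Your argument instead applies $H$'s symmetry once, globally, by transposing the full $(k-1)\times(k-1)$ grid; this produces a second (``column'') decomposition $g=f_\psi\circ (G_t)_t$ whose constituents $G_t=f_{\psi_t}\circ(f_{1,t},\ldots,f_{k-1,t})$ indeed lie in $F_i(\Psi)$ and are symmetric by induction. The remaining work is then offloaded to the clean group-theoretic observation that the row and column stabilisers of a rectangular array jointly generate the full symmetric group on its entries, via the conjugation $(a,b)=(b,c)(a,c)(b,c)$ with pivot $c=(s_1,t_2,q_1)$. What your approach buys is a conceptually transparent inductive step that avoids the positional bookkeeping entirely; what it costs is a small verification (which you carry out correctly) that the transposed evaluation of $H$ really re-parses as $f_\psi\circ(G_t)_t$. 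Two minor points worth making explicit in a final write-up: the pivot $c$ is distinct from $a$ and $b$ precisely because $s_1\ne s_2$ and $t_1\ne t_2$, and the degenerate cases $s_1=s_2$ or $t_1=t_2$ are already a single row or column move, so the conjugation is only needed when both differ. Both are immediate.
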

\begin{proof}
    We prove by induction on $i$. The claim is clear for $i=0,1,2$ by the assumptions. Suppose $i>2$ and that the claim holds for smaller values.

    Let $f\in F_i(\Psi)$. Let $\sigma\in \Omega^{(k-1)^i}$. Write $\sigma$ by $(\sigma_{1,1}\ldots\sigma_{1,k-1})\ldots(\sigma_{k-1,1}\ldots\sigma_{k-1,k-1})$ where each $\sigma_{j,h}\in\Omega^{(k-1)^{i-2}}$. By definition,
    \begin{equation*}
        f(\sigma)=f_{\psi_0}\bigl (f_{\psi_1}\bigl (f_{1,1}(\sigma_{1,1}),\ldots, f_{1,k-1}(\sigma_{1,k-1})\bigr ),\ldots, f_{\psi_{k-1}}\bigl (f_{k-1,1}(\sigma_{k-1,1}),\ldots, f_{k-1,k-1}(\sigma_{k-1,k-1})\bigr )\bigr )
    \end{equation*}
    for some $\psi_0,\psi_1,\ldots,\psi_k\in\Psi$ and for some $f_{1,1},\ldots,f_{1,k-1},\ldots,f_{k-1,k-1}\in F_{i-2}(\Psi)$.
    Let $f'$ be the function defined by
    \begin{align*}
        f'(\sigma_1\ldots \sigma_{k-1})=f_{\psi_0}\bigl (f_{\psi_1}(\sigma_1),\ldots, f_{\psi_{k-1}}(\sigma_{k-1})\bigr ),\quad \sigma_1,\ldots, \sigma_{k-1}\in \Omega^{k-1}.
    \end{align*}
    Since $f'\in F_2(\Psi)$, $f'$ is symmetric. 
    
  Again fix $\sigma\in \Omega^{(k-1)^i}$, and write $\sigma$ by $\sigma_1\ldots\sigma_{k-1}$, where $\sigma_j=\sigma_{j,1}\ldots \sigma_{j,k-1}\text{ for } j\in[k-1]$, where each $\sigma_{j,h}\in\Omega^{(k-1)^{i-2}}$.     To show that $f$ is symmetric, it suffices to show that $f$ is constant by transpositions of any two indices of $\sigma$.

   To specify two indices, let $r_1,s_1,r_2,s_2\in[k-1]$ with $r_1\leq r_2$. Suppose that $\sigma_{r_1,s_1}',\sigma_{r_2,s_2}'\in\Omega^{(k-1)^{i-2}}$ are obtained from $\sigma_{r_1,s_1}\sigma_{r_2,s_2}$ by a transposition of indices. Let $\sigma'$ be obtained from $\sigma$ with $\sigma_{r_1,s_1}$ and $\sigma_{r_2,s_2}$ replaced by $\sigma_{r_1,s_1}'$ and $\sigma_{r_2,s_2}'$ respectively. We write $\sigma'$ in the same block structure as $\sigma$. Therefore, 
    \begin{gather*}
        \sigma_{r_1}'=\sigma_{r_1,1}\ldots \sigma_{r_1,s_1-1}\sigma_{r_1,s_1}'\sigma_{r_1,s_1+1}\ldots \sigma_{r_1,k-1}, \quad \sigma_{r_2}'=\sigma_{r_2,1}\ldots\sigma_{r_2,s_2-1}\sigma_{r_2,s_2}'\sigma_{r_2,s_2+1} \ldots \sigma_{r_2,k-1}.
    \end{gather*}
    The following function
    \begin{equation*}
    \eta_1\ldots \eta_{k-1}\mapsto f_{\psi_{r_1}}\bigl (f_{r_1,1}(\eta_1)\ldots f_{r_1,{k-1}}(\eta_{k-1})\bigr ),\quad \eta_1,\ldots, \eta_{k-1}\in \Omega^{(k-1)^{i-2}}
    \end{equation*}
    is symmetric by the inductive hypothesis, so we have $f(\sigma)=f(\sigma')$ if $r_1=r_2$. Suppose $r_1<r_2$. Let $\rho_j=f_{\psi_j}\bigl (f_{j,1}(\sigma_{j,1})\ldots f_{j,k-1}(\sigma_{j,k-1})\bigr )$ for $j\in[k-1]$, and let
    \begin{align*}
        \eta_1&=(\rho_1,\ldots \rho_{r_1-1}),\quad \eta_2=(\rho_{r_1+1},\ldots \rho_{r_2-1}),\quad \eta_3=(\rho_{r_2+1},\ldots, \rho_{k-1}).
    \end{align*}
    Finally, let
    \begin{align*}
        &\pi_{1}^-=f_{r_1,1}(\sigma_{r_1,1})\ldots f_{r_1,s_1-1}(\sigma_{r_1,s_1-1})&\pi_{1}^+=f_{r_1,s_1+1}(\sigma_{r_1,s_1+1})\ldots f_{r_1,k-1}(\sigma_{r_1,k-1})\\
        &\pi_{2}^-=f_{r_2,1}(\sigma_{r_2,1})\ldots f_{r_2,s_2-1}(\sigma_{r_2,s_2-1})&\pi_{2}^+=f_{r_2,s_2+1}(\sigma_{r_2,s_2+1})\ldots f_{r_2,k-1}(\sigma_{r_2,k-1}).
    \end{align*}
    By definition of $f$, we then have
    \begin{equation*}
        f(\sigma)=f_{\psi_0}\bigl (\eta_1, f_{\psi_{r_1}}(\pi_1^-f_{r_1,s_1}(\sigma_{r_1,s_1})\pi^+_1),\eta_2,f_{\psi_{r_2}}(\pi_2^-f_{r_2,s_2}(\sigma_2)\pi^+_2),\eta_3\bigr ).
    \end{equation*}
For each $j=1,2$, rewrite the string $\pi_j^-\pi_j^+$ by $\tau_j \pi_j$ where $\tau_j\in \Omega$, i.e.\ $\tau_j$ is the first spin of $\pi_j^-$.

Since $f_{\psi_{r_1}},f_{\psi_{r_2}}$ are symmetric, we have
    \begin{equation*}
        f(\sigma)=f_{\psi_0}\bigl (\eta_1 f_{\psi_{r_1}}(f_{r_1,s_1}(\sigma_{r_1,s_1})\tau_1\pi_1)\eta_2f_{\psi_{r_2}}(f_{r_2,s_2}(\sigma_{r_2,s_2})\tau_2\pi_2)\eta_3\bigr ).
    \end{equation*}
    and since $f'$ is symmetric,
    \begin{align*}
        &f_{\psi_0}\bigl (\eta_1 f_{\psi_{r_1}}(f_{r_1,s_1}(\sigma_{r_1,s_1})\tau_1\pi_1)\eta_2f_{\psi_{r_2}}(f_{r_2,s_2}(\sigma_{r_2,s_2})\tau_2\pi_2)\eta_3\bigr )\\
        &\hspace{2cm}=f_{\psi_0}\bigl (\eta_1 f_{\psi_{r_1}}(f_{r_1,s_1}(\sigma_{r_1,s_1})f_{r_2,s_2}(\sigma_{r_2,s_2})\pi_1)\eta_2f_{\psi_{r_2}}(\tau_1\tau_2\pi_2)\eta_3\bigr ).
    \end{align*}
    By the induction hypothesis, the function
    \begin{equation*}
        \gamma_1\gamma_2\mapsto f_{\psi_{r_1}}\bigl (f_{r_1,s_1}(\gamma_1)f_{r_1,s_2}(\gamma_2)\pi_1\bigr ),\quad\gamma_1,\gamma_2\in \Omega^{(k-1)^{i-2}}
    \end{equation*}
    is also symmetric. Thus
    \begin{equation*}
        f_{\psi_{r_1}}\bigl (f_{r_1,s_1}(\sigma_{r_1,s_1})f_{r_2,s_2}(\sigma_{r_2,s_2})\pi_1\bigr )=f_{\psi_{r_1}}\bigl (f_{r_1,s_1}(\sigma_{r_1,s_1}')f_{r_2,s_2}(\sigma_{r_2,s_2}')\pi_1\bigr ).
    \end{equation*}
    and so
    \begin{align*}
        &f_{\psi_0}\bigl (\eta_1 f_{\psi_{r_1}}(f_{r_1,s_1}(\sigma_{r_1,s_1})f_{r_2,s_2}(\sigma_{r_2,s_2})\pi_1)\eta_2f_{\psi_{r_2}}(\tau_1\tau_2\pi_2)\eta_3\bigr )\\
        &\hspace{2cm}=f_{\psi_0}\bigl (\eta_1 f_{\psi_{r_1}}\bigl (f_{r_1,s_1}(\sigma_{r_1,s_1}')f_{r_2,s_2}(\sigma_{r_2,s_2}')\pi_1\bigr )\eta_2f_{\psi_{r_2}}(\tau_1\tau_2\pi_2)\eta_3\bigr ).
    \end{align*}
    Applying symmetry of $f'$, $f_{\psi_{r_1}}$, and $f_{\psi_{r_2}}$ again shows that
    \begin{align*}
        &f_{\psi_0}\bigl (\eta_1 f_{\psi_{r_1}}(f_{r_1,s_1}(\sigma_{r_1,s_1}')f_{r_2,s_2}(\sigma_{r_2,s_2}')\pi_1)\eta_2f_{\psi_{r_2}}(\tau_1\tau_2\pi_2)\eta_3\bigr )\\
        &\hspace{2cm}=f_{\psi_0}\bigl (\eta_1 f_{\psi_{r_1}}(f_{r_1,s_1}(\sigma_{r_1,s_1}')\tau_1\pi_1)\eta_2f_{\psi_{r_2}}(f_{r_2,s_2}(\sigma_{r_2,s_2}')\tau_2\pi_2)\eta_3\bigr )\\
        &\hspace{2cm}=f_{\psi_0}\bigl (\eta_1 f_{\psi_{r_1}}(\pi_1^-f_{r_1,s_1}(\sigma_{r_1,s_1}')\pi_2^+)\eta_2f_{\psi_{r_2}}(\pi_2^-f_{r_2,s_2}(\sigma_{r_2,s_2}')\pi_2^+)\eta_3\bigr )\\
        &\hspace{2cm}=f(\sigma')
    \end{align*}
    Therefore $f$ is symmetric.
\end{proof}

Given a multilset $S$ of elements in $\Omega$, let $m_S(x)$ denote the multiplicity of $x$ in $S$.
For two multisets $A$ and $B$ of spins in $\Omega$, define the symmetric difference $A\Delta B$ of them by
\[
m_{A\Delta B}(x)=|m_A(x)-m_B(x)|,\quad \forall x\in \Omega.
\]

\begin{lemma}\label{lem:solndifference}
Suppose $\Psi\subseteq \Lambda_{k,r}$ is such that every $\psi\in\Psi$ is commutative and every function in $F_2(\Psi)$ is symmetric.  Let $\ell\ge 1$ and let $\psi_i\in\Psi$ for $i\in[\ell]$  ($\{\psi_i\}_{i\in [\ell]}$ are not necessarily distinct). Let $\pi_i,\pi_i'\in \Omega^{k}$ for $i\in[\ell]$ be such that $\pi_i\models \psi_i$ and $\pi_i'\models \psi_i$.  Then the multisets of spins in $\pi_1\ldots \pi_\ell$ and $\pi_1'\ldots \pi_\ell'$ do not have a symmetric difference of size two.
\end{lemma}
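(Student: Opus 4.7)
I plan to prove the lemma by induction on $\ell$.

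\textbf{Base case} ($\ell = 1$): If the multisets $M_1,M_1'$ of $\pi_1,\pi_1'$ satisfy $|M_1\Delta M_1'|=2$, then $M_1 = S\cup\{a\}$ and $M_1' = S\cup\{b\}$ for some $(k-1)$-multiset $S$ and distinct $a,b\in\Omega$. Commutativity of $\psi_1$ lets me rearrange $\pi_1$ as $(s_1,\ldots,s_{k-1},a)$ and $\pi_1'$ as $(s_1,\ldots,s_{k-1},b)$ where $(s_1,\ldots,s_{k-1})$ enumerates $S$. Unique extendability then forces $a = f_{\psi_1}(s_1,\ldots,s_{k-1}) = b$, contradiction.

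\textbf{Inductive step} ($\ell \geq 2$): Writing $\alpha_i := M_i - M_i' \in \ZZ^\Omega$, the hypothesis gives $\sum_i \alpha_i = \delta_a - \delta_b$, and the base case applied per index shows each $\alpha_i$ is either zero or satisfies $\|\alpha_i\|_1 \geq 4$. If some $\alpha_j = 0$, I drop the index $j$ and invoke the induction hypothesis. Otherwise every $\alpha_i$ is nonzero with $\|\alpha_i\|_1\ge 4$ while $\|\sum_i\alpha_i\|_1 = 2$, so the $\alpha_i$'s must cancel heavily.

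To reach a contradiction, I exploit a spin-exchange mechanism afforded by the symmetry of $F_2(\Psi)$ (extended to every $F_i(\Psi)$ via Lemma~\ref{psiicom}). For any $\psi\in\Psi$ and any indices $i,j\in[\ell]$, the function $(\sigma_1,\ldots,\sigma_{k-1})\mapsto f_\psi\bigl(f_{\psi_i}(\sigma_1),f_{\psi_j}(\sigma_2),f_{\psi_i}(\sigma_3),\ldots,f_{\psi_i}(\sigma_{k-1})\bigr)$ lies in $F_2(\Psi)$ and is thus symmetric in its $(k-1)^2$ arguments; together with unique extendability of $\psi$ this produces the following ``cross'' rule. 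Given $M_i=\{x_1,\ldots,x_{k-1},x_k\}\models\psi_i$ and $M_j=\{y_1,\ldots,y_{k-1},y_k\}\models\psi_j$, any swap of entries between the free blocks $(x_1,\ldots,x_{k-1})$ and $(y_1,\ldots,y_{k-1})$ yields new multisets $\tilde M_i\models\psi_i$, $\tilde M_j\models\psi_j$ with the same combined multiset on the free coordinates and a controlled modification of the UE-determined coordinates $x_k,y_k$. Iterating such exchanges across all constraints, I plan to concentrate the net $\delta_a - \delta_b$ into a single index, producing two multisets satisfying the same $\psi_i$ whose symmetric difference is $2$, which contradicts the base case.

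\textbf{The main obstacle} is verifying that the exchange chain can always be carried out regardless of the combinatorial type of $(\alpha_i)_{i\in[\ell]}$: in particular, when the positive parts of various $\alpha_i$ cancel the negative parts of others in intricate ways, the sequence of swaps needed to concentrate the difference into one index is not obvious. A potentially cleaner route is to bypass explicit induction by evaluating a single global function $g\in F_d(\Psi)$ with $d$ chosen so that $(k-1)^d\geq k\ell$, padding unused leaves with a fixed constant spin and using full $F_d$-symmetry (Lemma~\ref{psiicom}) to equate the evaluations on $A$ and $A'$; but designing the padding so that $g$ ``reads off'' the constraints correctly requires care, and either route comes down to the same exchange principle.
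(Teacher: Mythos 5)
Your base case is correct, and your intuition about where the difficulty lies is right---but the inductive step is not carried out, and the obstruction you flag is a genuine gap, not a detail. You write that ``the sequence of swaps needed to concentrate the difference into one index is not obvious,'' and indeed you give no argument that such a chain exists for an arbitrary pattern of cancellations among the $\alpha_i$. Moreover, there is a second issue you do not confront: a single ``cross'' swap between blocks $i$ and $j$ changes the UE-determined coordinates $x_k,y_k$, and it is not clear that the \emph{total} multiset $M_1+\cdots+M_\ell$ is preserved under such a swap. Without that conservation law (or some replacement), the reduction to a single index with symmetric difference $2$ cannot be set up. So as written the induction does not close.

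The ``cleaner route'' you describe in your final paragraph is essentially what the paper does, and it completely bypasses the induction. The paper picks $i$ with $(k-1)^i\ge\ell$, takes any $g\in F_i(\Psi)$, and builds auxiliary functions $f',f''\in F_{i+1}(\Psi)$ and a single $f\in F_{i+2}(\Psi)$. The key step (Claim~\ref{claim:solndifference1}) constructs a fixed padding string $\sigma$ and a constant $\beta$ so that $f(\hat\pi_1\ldots\hat\pi_\ell\,\sigma)=\beta$ for \emph{every} choice of $\hat\pi_i\models\psi_i$; this uses only Lemma~\ref{psiicom} (to know $f',f'',f$ are symmetric) and a direct computation threading the UE structure through the blocks. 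The padding is not arbitrary: it is chosen as $\alpha^{(k-1)^{i+1}-\ell(k-1)}\rho^{k-3}\gamma^{k-2}\beta^{(k-1)^i-\ell}$, where $\gamma,\rho$ are preimages of $\beta$ under $g$ and $f'$ (guaranteed to exist by Lemma~\ref{psiiUE}); this is exactly the ``care'' you anticipated. Once the claim is in hand, symmetric difference $2$ means $\pi_1\ldots\pi_\ell$ and $\pi_1'\ldots\pi_\ell'$ are permutations of $\tau\pi$ and $\tau'\pi$ for some common $\pi$ and distinct $\tau,\tau'$, and full symmetry of $f$ together with the constancy $\beta$ gives $f(\tau\pi\sigma)=f(\tau'\pi\sigma)$, contradicting the injectivity of $\tau\mapsto f(\tau\pi\sigma)$ from Lemma~\ref{psiiUE}. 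If you want to repair your write-up, I'd abandon the induction and flesh out that second route; the exchange-chain approach may be salvageable but would require a separate argument for multiset conservation and for connectivity of the exchange graph.
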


\begin{proof}
    Let $i\geq 0$ be such that $(k-1)^i\geq \ell$ and let $g\in F_i(\Psi)$. Fix $\psi_0\in \Psi$ and define a pair of functions $f',f'':\Omega^{(k-1)^{i+1}}\to \Omega$ by
    \begin{align*}
        f'(\sigma_1\ldots \sigma_{(k-1)^i})&=g\bigl (f_{\psi_1}(\sigma_1),\ldots, f_{\psi_{\ell}}(\sigma_\ell),f_{\psi_0}(\sigma_{\ell+1}),\ldots, f_{\psi_0}(\sigma_{(k-1)^i})\bigr ),&&\sigma_1,\ldots, \sigma_{(k-1)^i}\in \Omega^{k-1}\\
        f''(\sigma_1\ldots\sigma_{k-1})&=f_{\psi_0}\bigl (g(\sigma_1),\ldots, g(\sigma_{k-1})\bigr ),&&\sigma_1,\ldots, \sigma_{k-1}\in \Omega^{(k-1)^i},
    \end{align*}
    and define $f:\Omega^{(k-1)^{i+2}}\to \Omega$ by 
    \begin{align*}
        f(\sigma_1,\ldots, \sigma_{k-1})=f_{\psi_0}\bigl (f'(\sigma_1)\ldots f'(\sigma_{k-2})f''(\sigma_{k-1})\bigr ),&&\sigma_1,\ldots, \sigma_{k-1}\in \Omega^{(k-1)^{i+1}}.
    \end{align*}
    Then $f',f''\in F_{i+1}(\Psi)$ and $f\in F_{i+2}(\Psi)$, so Lemma \ref{psiicom} shows that $f'$, $f''$, and $f$ are symmetric.
    \begin{claim}\label{claim:solndifference1}
        There is a $\beta \in \Omega$ and a $\sigma\in \Omega^{(k-1)^{i+2}-\ell k}$ such that 
        $f(\hat\pi_1\ldots \hat\pi_\ell \sigma)=\beta$ for all $(\hat\pi_i)_{ i\in[\ell]}$, such that $\hat\pi_i\models \psi_i$ for all $i
        \in [\ell]$.
    \end{claim}
    \begin{proof}
    Fix $\alpha\in \Omega$ and let $\beta=f_{\psi_0}(\alpha^{k-1})$. By Lemma \ref{psiiUE}, there are $\gamma\in \Omega^{(k-1)^i}$ and $\rho\in \Omega^{(k-1)^{i+1}}$ such that $g(\gamma)=f'(\rho)=\beta$. Let $\hat\pi_i\in \Omega^k$ for $i\in[\ell]$ be such that $\hat\pi_i\models \psi_i$. Let $\eta_i\in \Omega^{k-1}$ and $\tau_i\in\Omega$ for $i\in[\ell]$ be such that $\hat\pi_i=\eta_i\tau_i$. We then have $f_{\psi_i}(\eta_i)=\tau_i$, and thus,
    \begin{align*}
        f'(\eta_1\ldots \eta_{\ell} \alpha^{(k-1)^{i+1}-\ell(k-1)})&=g\bigl (f_{\psi_1}(\eta_1)\ldots f_{\psi_\ell}(\sigma_\ell)f_{\psi_0}(\alpha^{k-1})\ldots f_{\psi_0}(\alpha^{k-1})\bigr )\\
        &=g\bigl (\tau_1\ldots \tau_\ell\beta^{(k-1)^i-\ell}\bigr )
    \end{align*}
    and
    \begin{align*}
        f''(\gamma^{k-2}\tau_1\ldots \tau_\ell\beta^{(k-1)^i-\ell})&=f_{\psi_0}\bigl (g(\gamma),\ldots, g(\gamma),g(\tau_1\ldots \tau_\ell\beta^{(k-1)^i-\ell})\bigr )\\
        &=f_{\psi_0}\bigl (\beta^{k-2}g(\tau_1\ldots \tau_\ell\beta^{(k-1)^i-\ell})\bigr ).
    \end{align*}
    Now, by symmetry of $f$,
    \begin{align*}
        &f(\hat\pi_1\ldots \hat\pi_\ell\alpha^{(k-1)^{i+1}-\ell(k-1)}\rho^{k-3}\gamma^{k-2}\beta^{(k-1)^i-\ell})\\
        &\hspace{2cm}=f(\eta_1\ldots \eta_\ell\alpha^{(k-1)^{i+1}-\ell(k-1)}\rho^{k-3}\gamma^{k-2}\tau_1\ldots \tau_\ell\beta^{(k-1)^i-\ell})\\
        &\hspace{2cm}=f_{\psi_0}\bigl (f'(\eta_1\ldots \eta_\ell\alpha^{(k-1)^{i+1}-\ell(k-1)})f'(\rho)\ldots f'(\rho)f''(\gamma^{k-2}\tau_1\ldots \tau_\ell\beta^{(k-1)^i-\ell})\bigr )\\
        &\hspace{2cm}=f_{\psi_0}\bigl (g(\tau_1\ldots \tau_\ell\beta^{(k-1)^i-\ell})\beta^{k-3}f_{\psi_0}(\beta^{k-2}g(\tau_1\ldots \tau_\ell\beta^{(k-1)^i-\ell}))\bigr )\\
        &\hspace{2cm}=f_{\psi_0}\bigl (\beta^{k-3}g(\tau_1\ldots \tau_\ell\beta^{(k-1)^i-\ell})f_{\psi_0}(\beta^{k-2}g(\tau_1\ldots \tau_\ell\beta^{(k-1)^i-\ell}))\bigr ).
    \end{align*}
    By definition for $f_{\psi_0}$, we have
    \begin{align*}
        \beta^{k-2}g(\tau_1\ldots \tau_\ell\beta^{(k-1)^i-\ell})f_{\psi_0}(\beta^{k-2}g(\tau_1\ldots \tau_\ell\beta^{(k-1)^i-\ell}))\models \psi_0,
    \end{align*}
    and so,
    \begin{equation*}
        f_{\psi_0}\bigl (\beta^{k-3}g(\tau_1\ldots \tau_\ell\beta^{(k-1)^i-\ell})f_{\psi_0}(\beta^{k-2}g(\tau_1\ldots \tau_\ell\beta^{(k-1)^i-\ell}))\bigr )=\beta.
    \end{equation*}
    Therefore the claim holds with $\sigma=\alpha^{(k-1)^{i+1}-\ell(k-1)}\rho^{k-3}\gamma^{k-2}\beta^{(k-1)^i-\ell}$.
    \end{proof}
    Now suppose for a contradiction that there are 
    $(\pi_i,\pi_i')_{i\in [\ell]}$ such that $\pi_i,\pi_i'\models \psi_i$ for each $i\in[\ell]$ and such that the multisets of spins in $\pi_1\ldots \pi_\ell$ and $\pi_1'\ldots \pi_\ell'$ have symmetric difference of size two. Let $\pi\in \Omega^{\ell k-1}$ and $\tau,\tau'\in \Omega$ be such that $\tau\pi$ is a permutation of $\pi_1\ldots \pi_\ell$ and $\tau'\pi$ is a permutation of $\pi_1'\ldots \pi_\ell'$. Then, by symmetry of $f$ and Claim \ref{claim:solndifference1}, there exists $\sigma\in \Omega^{(k-1)^{i+2}-k\ell}$ such that
    \begin{equation*}
    f(\tau \pi\sigma)=f(\pi_1\ldots\pi_\ell\sigma)=f(\pi_1'\ldots \pi_\ell'\sigma)=f(\tau'\pi\sigma).
    \end{equation*}
    But this contradicts the injectivity of the function
    \begin{equation*}
        \tau \mapsto f(\tau\pi\sigma)
    \end{equation*}
    from Lemma \ref{psiiUE}.
    
    \end{proof}

\begin{proof}[Proof of Theorem~\ref{symthm}]
    Suppose \ref{symthm1} holds. Let $(\Omega,+)$ be a group and let $\{b_\psi\}\subset \Omega$ be a set of spins such that $\psi=\psi_{(\Omega,+),b_\psi}$ for all $\psi\in \Psi$. Let $e\in \Omega$ be the identity in $(\Omega,+)$. For a string $\sigma=(\sigma_1,\ldots, \sigma_\ell)\in \Omega^\ell$, where $\ell\ge 1$, denote
    \begin{equation*}
        \sigma^\oplus=\sum_{i=1}^\ell \sigma_i.
    \end{equation*}
    Then, for any $\psi\in \Psi$ and $g_1,g_2\in \Omega$, commutativity of $\psi$ gives
    \begin{equation*}
        g_1g_2e^{k-3}f_\psi(g_1g_2e^{k-3})\models \psi,\quad g_2g_1e^{k-3}f_\psi(g_1g_2e^{k-3})\models \psi
    \end{equation*}
    and therefore
    \begin{equation*}
        g_1+g_2+f_\psi(g_1g_2e^{k-3})=b_{\psi}=g_2+g_1+f_\psi(g_1g_2e^{k-3})
    \end{equation*}
    Hence $g_1+g_2=g_2+g_1$ for all $g_1,g_2\in \Omega$, and so $(\Omega,+)$ is an Abelian group.

    Now let $f\in F_2(\Psi)$. Let $\psi_0,\ldots, \psi_{k-1}\in \Psi$ be such that
    \begin{align*}
        f(\sigma_1\ldots \sigma_{k-1})=f_{\psi_0}\bigl (f_{\psi_1}(\sigma_1)\ldots f_{\psi_{k-1}}(\sigma_{k-1})\bigr ),&&\sigma_1,\ldots, \sigma_{k-1}\in \Omega^{k-1}.
    \end{align*}
    For any $i\in0,\ldots, k-1$, we have $\sigma f_{\psi_i}(\sigma)\models \psi$ for any $\sigma\in \Omega^{k-1}$, which yields
    \begin{equation*}
        \sigma^\oplus +f_{\psi_i}(\sigma)=b_{\psi_i}
    \end{equation*}
    and hence, for any $\sigma_1,\ldots, \sigma_{k-1}\in \Omega^{k-1}$,
    \begin{align*}
        b_{\psi_0}&= \sum_{i=1}^{k-1}f_{\psi_i}(\sigma_i) + f_{\psi_0}\bigl (f_{\psi_1}(\sigma_1)\ldots f_{\psi_{k-1}}(\sigma_{k-1})\bigr )\\
        &= \sum_{i=1}^{k-1}b_{\psi_i}-\sigma_i^\oplus  + f_{\psi_0}\bigl (f_{\psi_1}(\sigma_1)\ldots f_{\psi_{k-1}}(\sigma_{k-1})\bigr )\\
    \end{align*}
    which gives
    \begin{align*}
        f(\sigma_1\ldots \sigma_k)= b_{\psi_0}-\sum_{i=1}^{k-1}b_{\psi_i}+\sum_{i=1}^{k-1}\sigma_i^\oplus.
    \end{align*}
    Since $(\Omega,+)$ is Abelian, this shows that $f$ is symmetric.\\
    
    For the converse direction, suppose \ref{symthm2} holds. Fix $\alpha\in \Omega$ and let $\beta_\psi=f_\psi(\alpha\ldots \alpha)$ for all $\psi\in \Psi$. Let $G'={\mathbb Z}^{(\Omega)}$ be the free Abelian group with basis $\Omega$, so that the elements of $G'$ are formal $\ZZ$-linear combinations of elements of $\Omega$; that is,
    \begin{equation*}
    G'=\left\{\sum_{\tau\in \Omega} n_\tau\tau:\ (n_{\tau})_{\tau\in \Omega}\in {\mathbb Z}^{\Omega}\right\}.
    \end{equation*}

Let $N$ be the subgroup of $G'$ generated by the set
\begin{equation*}
\big\{\sigma^\oplus_1-\sigma_2^\oplus: \sigma_1,\sigma_2\models \psi,\psi\in\Psi\big\}\cup \big\{\alpha\big\}.
\end{equation*}
Notice that every element in $N$ can be expressed in the form of
    \begin{equation}
   a\alpha + \sum_{i=1}^\ell \sigma_{i,1}^\oplus-\sigma_{i,2}^\oplus, \quad \sigma_{i,1},\sigma_{i,2}\models \psi_i\text{ for some }\psi_i\in\Psi, \ \ell\ge 0,\ a\in {\mathbb Z}. \label{eq:elements-in-N}
    \end{equation}

Since $G'$ is Abelian, $N$ is a normal subgroup. Let $(G,+)$ be the quotient group $G'/N$ and let $\bar a$ denote the projection of $a\in G'$ in $G$. Let $\pi:\Omega\to G$ be the restriction of the projection map to $\Omega$, where $\Omega$ is considered as a subset of $G'$. Let $b_\psi=\bar\beta_\psi$ for all $\psi\in \Psi$. The converse direction of the theorem is implied by the following two claims.

\begin{claim}\label{claim:G}
    The restriction $\pi$ is a bijection.
\end{claim}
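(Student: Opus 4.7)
The plan is to handle surjectivity and injectivity of $\pi$ separately. For surjectivity, let $H=\pi(\Omega)\subseteq G$; since $\pi(\Omega)$ generates $G$ by construction, it suffices to show $H$ is a subgroup. Fix any $\psi\in\Psi$. Closure under addition will follow from two applications of unique extendability: for $\omega_1,\omega_2\in\Omega$, the solution $\omega_1\omega_2\alpha^{k-3}f_\psi(\omega_1\omega_2\alpha^{k-3})\models\psi$ combined with $\pi(\alpha)=0$ gives $\pi(\omega_1)+\pi(\omega_2)=\pi(\beta_\psi)-\pi(f_\psi(\omega_1\omega_2\alpha^{k-3}))$, and a second application using $\omega\alpha^{k-2}f_\psi(\omega\alpha^{k-2})\models\psi$ rewrites $\pi(\beta_\psi)-\pi(\omega)$ as $\pi(f_\psi(\omega\alpha^{k-2}))\in H$. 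For closure under negation, given $\omega\in\Omega$, UE together with commutativity of $\psi$ produces a unique $\omega'\in\Omega$ with $\omega\omega'\alpha^{k-3}\beta_\psi\models\psi$, and a direct check gives $\pi(\omega)+\pi(\omega')=\pi(\alpha)=0$.

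For injectivity, write $N=N_0+\ZZ\alpha$, where $N_0$ is the subgroup of $G'$ generated by the differences $\sigma_1^\oplus-\sigma_2^\oplus$; this decomposition follows from~\eqref{eq:elements-in-N}. Define the augmentation $\mathrm{aug}\colon G'\to\ZZ$ by sending every basis element to $1$: every generator of $N_0$ has augmentation $0$, whereas $\mathrm{aug}(\alpha)=1$. If $\pi(\tau)=\pi(\tau')$ for $\tau,\tau'\in\Omega$, then $\tau-\tau'\in N$ has augmentation $0$, which forces the $\alpha$-coefficient to vanish, so $\tau-\tau'\in N_0$. After expanding multiplicities and flipping signs where needed, we obtain a formal identity $\tau+\sum_{i=1}^\ell\sigma_{i,2}^\oplus=\tau'+\sum_{i=1}^\ell\sigma_{i,1}^\oplus$ in $G'$, where each pair $\sigma_{i,1},\sigma_{i,2}\models\psi_i$ for some $\psi_i\in\Psi$.

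The crux is then to convert this identity into the hypothesis of Lemma~\ref{lem:solndifference}. Pick any $\psi_0\in\Psi$ and form the two solution sequences $A=(\sigma_{1,2},\ldots,\sigma_{\ell,2},\tau\alpha^{k-2}\eta)$ and $B=(\sigma_{1,1},\ldots,\sigma_{\ell,1},\tau'\alpha^{k-2}\eta')$, where $\eta=f_{\psi_0}(\tau\alpha^{k-2})$ and $\eta'=f_{\psi_0}(\tau'\alpha^{k-2})$, both indexed by the common $\psi$-tuple $(\psi_1,\ldots,\psi_\ell,\psi_0)$. Substituting the rearranged equation and noting that the $(k-2)\alpha$ contributions cancel on both sides, the $G'$-difference of the multisets of spins of $A$ and $B$ reduces to exactly $\eta-\eta'$. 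If $\tau\neq\tau'$, unique extendability of $\psi_0$ forces $\eta\neq\eta'$, whence the multiset symmetric difference has size exactly two, contradicting Lemma~\ref{lem:solndifference}. The main obstacle will be engineering $A$ and $B$ so that the extraneous spins cancel in $G'$ leaving only the $\eta-\eta'$ term; the augmentation step is essential here, since it rules out any leftover $a\alpha$ that would otherwise require additional $\alpha$-padding on one side.
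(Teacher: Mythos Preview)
Your proof is correct and follows essentially the same strategy as the paper's. For surjectivity, both you and the paper argue that $\pi(\Omega)$ is closed under addition and negation (the paper phrases closure under addition as an induction on the number of summands, but the content is identical to your two applications of unique extendability). For injectivity, both arguments use the augmentation (what the paper calls ``the number of spins, counting multiplicities'') to force $a=0$ and then invoke Lemma~\ref{lem:solndifference}.

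The one difference is that your application of Lemma~\ref{lem:solndifference} introduces an unnecessary extra step. Once you have $\tau-\tau'=\sum_{i=1}^\ell(\sigma_{i,1}^\oplus-\sigma_{i,2}^\oplus)$ in $G'$ with $\tau\neq\tau'$, the multisets of spins in $\sigma_{1,1}\ldots\sigma_{\ell,1}$ and $\sigma_{1,2}\ldots\sigma_{\ell,2}$ already have symmetric difference exactly $\{\tau,\tau'\}$, i.e.\ size two, so the lemma applies directly to the sequences $(\sigma_{i,1})_i$ and $(\sigma_{i,2})_i$. Your padding with $\tau\alpha^{k-2}\eta$ and $\tau'\alpha^{k-2}\eta'$ shifts the size-two discrepancy from $\{\tau,\tau'\}$ to $\{\eta,\eta'\}$, which is correct but not needed.
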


\begin{claim}\label{claim:equivalence}
    For any $\tau_1,\ldots, \tau_k\in \Omega$ and $\psi\in \Psi$, we have $\tau_1\ldots \tau_k\models \psi$ if and only if $\sum_{i=1}^k\pi(\tau_i)=b_{\psi}$.
\end{claim}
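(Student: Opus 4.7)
The plan is to prove the two directions of Claim~\ref{claim:equivalence} separately. The forward direction is an essentially mechanical consequence of the definition of $N$, while the reverse direction leverages Claim~\ref{claim:G} (injectivity of $\pi$, stated just above) together with the unique extendability of $\psi$.

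For the forward direction, suppose $\tau_1\ldots\tau_k\models\psi$. By the definition $\beta_\psi=f_\psi(\alpha^{k-1})$, the tuple $(\alpha,\ldots,\alpha,\beta_\psi)$ also satisfies $\psi$. Hence, viewing $\sigma^\oplus$ as the formal sum of spins of a $k$-tuple $\sigma$ in $G'$, the element
\begin{equation*}
    \sum_{i=1}^k\tau_i-\bigl((k-1)\alpha+\beta_\psi\bigr)
\end{equation*}
is one of the generators of $N$. Since $\alpha\in N$ is also a generator, we have $(k-1)\alpha\in N$, and therefore $\sum_{i=1}^k\tau_i-\beta_\psi\in N$. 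Projecting to $G=G'/N$ gives $\sum_{i=1}^k\pi(\tau_i)=\bar\beta_\psi=b_\psi$.

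For the reverse direction, suppose $\sum_{i=1}^k\pi(\tau_i)=b_\psi$. By the unique extendability of $\psi$, there is a unique $\tau^\ast\in\Omega$ such that $\tau_1\ldots\tau_{k-1}\tau^\ast\models\psi$. Applying the forward direction to this tuple yields $\sum_{i=1}^{k-1}\pi(\tau_i)+\pi(\tau^\ast)=b_\psi$, and combining with the hypothesis gives $\pi(\tau_k)=\pi(\tau^\ast)$. By Claim~\ref{claim:G}, the restriction $\pi:\Omega\to G$ is injective, so $\tau_k=\tau^\ast$, and hence $\tau_1\ldots\tau_k\models\psi$.

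No step in this argument is difficult once Claim~\ref{claim:G} is in hand; the entire weight of the characterisation therefore rests on that injectivity statement. I expect the main obstacle to lie not in Claim~\ref{claim:equivalence} itself but in the surrounding Claim~\ref{claim:G}, where one must rule out the possibility that two distinct spins $\tau\neq\tau'$ satisfy $\tau-\tau'\in N$. This is precisely the situation addressed by Lemma~\ref{lem:solndifference}: any such relation, after clearing the $\alpha$-contributions by absorbing them into tuples of the form $\alpha^{k-1}\beta_\psi$, would exhibit two collections of UE-satisfying tuples whose multisets of spins differ by exactly two, contradicting the lemma.
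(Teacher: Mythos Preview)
Your proof is correct. The forward direction is identical to the paper's. For the reverse direction the paper argues slightly differently: it observes that the componentwise map $\tilde\pi$ is injective (by Claim~\ref{claim:G}) from the solution set of $\psi$ to that of $\psi_{G,b_\psi}$, and since both sets have size $|\Omega|^{k-1}$ (using $|G|=|\Omega|$ from Claim~\ref{claim:G} and unique extendability of both constraints), $\tilde\pi$ must be a bijection. Your argument instead uses unique extendability directly to produce $\tau^\ast$ and then invokes injectivity of $\pi$; this avoids the cardinality count and is a touch more direct, but the two routes are essentially equivalent and rest on the same ingredient, namely Claim~\ref{claim:G}.
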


\noindent {\em Proof of Claim~\ref{claim:G}.\ } Let $\bar\Omega\subset G$ be the image of $\Omega$ in $G$ under $\pi$. To show that $G=\bar\Omega$, we start by showing that $\bar\Omega$ is closed under taking inverses.
    Let $\tau \in \Omega$ and fix $\psi\in \Psi$. By unique extendability, there is a unique $\tau'\in \Omega$ such that $\tau\tau'\alpha\ldots\alpha\beta_\psi\models \psi$. Since $\alpha\ldots \alpha \beta_\psi\models \psi$, we have
    \begin{equation*}
        (\tau+\tau'+(k-3)\alpha+\beta_\psi)-((k-1)\alpha+\beta_\psi)=\tau+\tau'-2\alpha\in N.
    \end{equation*}
    Since $\alpha\in N$, we get $\tau+\tau'\in N$, and thus $-\bar \tau=\overline{\tau'}\in \bar \Omega$.
    
 Therefore, for any $\bar \sigma\in G$, we can write $\bar\sigma=\sum_{i=1}^\ell \bar \tau_i$ for some $\tau_1,\ldots,\tau_\ell\in \Omega$, since any $-\tau_i$ can be expressed as $\overline{\tau'}$ for some $\tau'\in \Omega$. To show that $G\subset \bar \Omega$ we will show that $\sum_{i=1}^\ell \bar\tau_i\in\bar\Omega$ for all $\tau_1,\ldots, \tau_\ell\in \Omega$ by induction on $\ell$. 
 
 The case for $\ell=1$ is clear. Suppose $\ell>1$, let $\tau_1,\ldots,\tau_\ell\in \Omega$, and fix $\psi\in \Psi$. By unique extendability of $\psi$ there is a unique $\tau\in\Omega$ such that $\tau_1\tau_2\tau\alpha\ldots\alpha\models \psi$, and thus $\bar\tau_1+\bar\tau_2+\bar\tau=\bar\beta_\psi$. There is also a unique $\tau'\in\Omega$ such that $\tau'\tau\alpha\dots\alpha\models \psi$, and thus $\overline{\tau'}+\bar\tau=\bar\beta_\psi$. Therefore $\bar\tau_1+\bar\tau_2=\overline{\tau'}$. We then get $\sum_{i=1}^\ell\bar\tau_i=\bar\tau'+\sum_{i=3}^\ell\bar \tau_i$, with $\bar\tau'+\sum_{i=3}^\ell\bar \tau_i\in \bar\Omega$ by the induction hypothesis. Hence $G=\bar\Omega$ and hence $\pi$ is surjective.

    To show that $\pi$ is injective, suppose $\tau,\tau'\in \Omega$ are such that $\bar \tau=\overline{\tau'}$, and suppose for a contradiction that $\tau\neq \tau'$. By~(\ref{eq:elements-in-N}),
    \begin{equation}\label{eq:spins}
    \tau'=\tau + a\alpha+ \sum_{i=1}^\ell \sigma_{i,1}^\oplus-\sigma_{i,2}^\oplus
    \end{equation}
    for some $a\in \ZZ$, $\sigma_{i,1},\sigma_{i,2}\in \Omega^k$, $\ell\geq 0$ such that, for each $i\in[\ell]$, $\sigma_{i,1},\sigma_{i,k}\models \psi_i$ for some $\psi_i\in \Psi$. Since the number of spins, counting multiplicities, on the right side of (\ref{eq:spins}) is $1$, we have $a=0$. Since $\tau\neq \tau'$, (\ref{eq:spins}) then implies that the symmetric difference between the multisets of spins in $\sigma_{1,1}\ldots \sigma_{\ell,1}$ and $\sigma_{1,2}\ldots \sigma_{\ell,2}$ has size two, which contradicts Lemma \ref{lem:solndifference}. Hence $\tau=\tau'$, as desired. \qed \smallskip

   \noindent {\em Proof of Claim~\ref{claim:equivalence}.\ } Let $\psi\in \Psi$. For each solution $\tau_1\ldots \tau_k$ of $\psi$, define $\tilde\pi(\tau_1\ldots \tau_k)$ by
   \begin{equation*}
       \tilde\pi(\tau_1\ldots \tau_k)=\pi(\tau_1)\ldots \pi(\tau_k)\in G^k.
   \end{equation*}
   To prove Claim \ref{claim:equivalence}, we will show that $\tilde \pi$ is a bijection between the solution sets of $\psi$ and $\psi_{G,b_\psi}$. Suppose $\tau_1\ldots\tau_k\in\Omega^k$ satisfies $\psi$. Then $\bar\tau_1+\dots+\bar\tau_k=\bar \beta_\psi$ by definition of $N$, and so $\tilde \pi(\tau_1\ldots \tau_k)=\bar \tau_1\ldots \bar \tau_k$ satisfies $\psi_{G,b_\psi}$. Therefore $\tilde \pi$ is function from the solution set of $\psi$ to the solution set of $\psi_{G,b_\psi}$. By Claim \ref{claim:G}, $\pi$ is injective, so $\tilde\pi$ is also injective. Moreover, Claim \ref{claim:G} shows that $|G|=|\Omega|$. Since $\psi$ and $\psi_{G,b_\psi}$ are both UE, it follows that $\psi$ and $\psi_{G,b_\psi}$ each have exactly $|\Omega|^{k-1}$ solutions. Therefore $\tilde \pi$ is also surjective, as desired. \qed \smallskip

   By Claim \ref{claim:G}, we can define a group operation $(\Omega, +)$ by
   \begin{equation*}
       \tau_1+\tau_2=\pi^{-1}(\pi(\tau_1)+\pi(\tau_2)),\quad \tau_1,\tau_2\in \Omega.
   \end{equation*}
   By Claim \ref{claim:equivalence}, we have $\psi=\psi_{(\Omega,+),\beta_\psi}$ for all $\psi\in \Psi$ for this choice of $(\Omega,+)$.
\end{proof}

\begin{proof}[Proof of Theorem~\ref{cor:reducible}]
    By Theorem \ref{symthm}, it suffices to show that $\Psi$ is reducible if and only every function in $F_2(\Psi)$ is symmetric. Suppose $\Psi$ is reducible. Since the functions $f_\psi$, $\psi\in \Psi$ are symmetric, it suffices to show that 
    \begin{equation*}
        f_{\psi_0} \bigl (f_{\psi_1}(\tau_1\sigma_1)f_{\psi_2}(\tau_2\sigma_2)f_{\psi_3}(\sigma_3)\ldots f_{\psi_{k-1}}(\sigma_{k-1})\bigr )=f_{\psi_0} \bigl (f_{\psi_1}(\tau_2\sigma_1)f_{\psi_2}(\tau_1\sigma_2)f_{\psi_3}(\sigma_3)\ldots f_{\psi_{k-1}}(\sigma_{k-1})\bigr )
    \end{equation*}
    for all $\tau_1,\tau_2\in \Omega$, $\sigma_1,\sigma_2\in \Omega^{k-2}$, $\sigma_3,\ldots,\sigma_{k-1}\in \Omega^{k-1}$, and $\psi_0,\ldots, \psi_{k-1}\in \Psi$. Let $\tau_1,\tau_2$, $\sigma_1,\sigma_2,\sigma_3,\ldots,\sigma_{k-1}$, $\psi_0,\ldots, \psi_{k-1}$ be given. For notational convenience, let 
    \begin{align}
    \tau&=f_{\psi_3}(\sigma_3)\ldots f_{\psi_{k-1}}(\sigma_{k-1})\nonumber\\
    z_1&=f_{\psi_0} \bigl (f_{\psi_1}(\tau_1\sigma_1)f_{\psi_2}(\tau_2\sigma_2)\tau\bigr )\label{z1}\\ z_2&=f_{\psi_0} \bigl (f_{\psi_1}(\tau_2\sigma_1)f_{\psi_2}(\tau_1\sigma_2)\tau\bigr )\label{z2}
    \end{align}
    and we aim to show that $z_1=z_2.$ Observe that if there is an $\eta\in \Omega^{k-2}$ such that $f_{\psi_1}(\tau_1\sigma_1)f_{\psi_2}(\tau_2\sigma_2)\eta\models\psi_0$ and $f_{\psi_1}(\tau_2\sigma_1)f_{\psi_2}(\tau_1\sigma_2)\eta\models\psi_0$, then by the reducibility of $\Psi$ and~\eqref{z1} it follows that
    \begin{equation}
    f_{\psi_1}(\tau_2\sigma_1)f_{\psi_2}(\tau_1\sigma_2)\tau z_1 \models \psi_0, \label{eq:swap1}
    \end{equation}
    and then $z_1=z_2$ follows by~\eqref{eq:swap1},~\eqref{z2} and the unique extendability of $\psi_0$.
    Therefore it suffices to prove the following claim.
    \begin{claim}\label{claim:commonextension}
        For all $\psi_0,\psi_1,\psi_2\in \Psi$, $\tau_1,\tau_2\in \Omega$, and $\sigma_1,\sigma_2\in \Omega^{k-2}$, there is an $\eta\in \Omega^{k-2}$ such that $f_{\psi_1}(\tau_1\sigma_1)f_{\psi_2}(\tau_2\sigma_2)\eta\models \psi_0$ and $f_{\psi_1}(\tau_2\sigma_1)f_{\psi_2}(\tau_1\sigma_2)\eta\models\psi_0$.
    \end{claim}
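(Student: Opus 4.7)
The plan is to construct the desired $\eta\in\Omega^{k-2}$ explicitly and verify both satisfaction conditions by invoking the exchange property of Remark~\ref{r:extension-exchange} twice. First I would observe that the defining relations for $a_i,b_i$ yield two auxiliary common extensions: using commutativity of $\psi_1$, the relations $\tau_i\sigma_1 a_i\models\psi_1$ for $i=1,2$ rewrite as $a_i\tau_i\sigma_1\models\psi_1$, so the length-$2$ strings $a_1\tau_1$ and $a_2\tau_2$ share $\sigma_1$ as a common extension under $\psi_1$. Analogously, commutativity of $\psi_2$ applied to $\tau_1\sigma_2 b_2\models\psi_2$ and $\tau_2\sigma_2 b_1\models\psi_2$ shows that $b_2\tau_1$ and $b_1\tau_2$ share $\sigma_2$ as a common extension under $\psi_2$.

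Since $\psi_0$ is uniquely extendable and $k\geq 4$, there exists $\eta'\in\Omega^{k-3}$ such that $a_2 b_1\tau_2\eta'\models\psi_0$ (indeed, such $\eta'$ is furnished by fixing any first $k-4$ coordinates and completing by unique extendability). I claim $\eta:=\tau_1\eta'$ works. To verify $a_1 b_1\eta\models\psi_0$: by commutativity of $\psi_0$, the relation $a_2 b_1\tau_2\eta'\models\psi_0$ rearranges to $a_2\tau_2(b_1\eta')\models\psi_0$; applying Remark~\ref{r:extension-exchange} to the first auxiliary equivalence then yields $a_1\tau_1(b_1\eta')\models\psi_0$, which by commutativity of $\psi_0$ is $a_1 b_1\tau_1\eta'\models\psi_0$. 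To verify $a_2 b_2\eta\models\psi_0$: commutativity of $\psi_0$ rearranges $a_2 b_1\tau_2\eta'\models\psi_0$ to $b_1\tau_2(a_2\eta')\models\psi_0$; applying Remark~\ref{r:extension-exchange} to the second auxiliary equivalence gives $b_2\tau_1(a_2\eta')\models\psi_0$, which is $a_2 b_2\tau_1\eta'\models\psi_0$.

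The main conceptual hurdle is identifying the correct pair of bridging equivalences $a_i\tau_i$ with $a_{3-i}\tau_{3-i}$ and $b_i\tau_{3-i}$ with $b_{3-i}\tau_i$. These are engineered precisely so that a single base witness $a_2 b_1\tau_2\eta'\models\psi_0$ (in which the $\tau_i$ are mismatched with both the $a_i$ and the $b_i$) simultaneously reaches both target strings $a_1 b_1\tau_1\eta'$ and $a_2 b_2\tau_1\eta'$ under two independent applications of the exchange property. The assumption $k\geq 4$ is used to ensure that the required extension $\eta\in\Omega^{k-2}$ has room for both a fixed $\tau_1$ coordinate and the auxiliary suffix $\eta'\in\Omega^{k-3}$. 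Once the bridges have been spotted, the remainder of the argument is routine bookkeeping with commutativity of $\psi_0$.
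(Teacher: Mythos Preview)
Your argument is correct and uses exactly the same ingredients as the paper: the two common-extension bridges $a_1\tau_1\sim a_2\tau_2$ (via $\sigma_1$ under $\psi_1$) and $b_1\tau_2\sim b_2\tau_1$ (via $\sigma_2$ under $\psi_2$), together with two applications of the exchange property from Remark~\ref{r:extension-exchange}. The only difference is organisational: the paper starts from one of the two target strings, $a_1b_1\tau_2\eta'\models\psi_0$, and chains the two exchanges sequentially to reach the other target $a_2b_2\tau_2\eta'\models\psi_0$ (so $\eta=\tau_2\eta'$), whereas you start from the ``hub'' string $a_2b_1\tau_2\eta'\models\psi_0$ and apply the two exchanges in parallel to reach $a_1b_1\tau_1\eta'$ and $a_2b_2\tau_1\eta'$ independently (so $\eta=\tau_1\eta'$). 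Both routes are equally short and rely on the same insight.
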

    \noindent {\em Proof of Claim~\ref{claim:commonextension}.\ }
        Since $k\geq 4$, there is an $\eta'\in \Omega^{k-3}$ such that 
        \begin{equation}\label{eq:redclaim1}
            f_{\psi_1}(\tau_1\sigma_1)f_{\psi_2}(\tau_2\sigma_2)\tau_2\eta'\models\psi_0.
        \end{equation}
        We also have
        \begin{equation*}
            f_{\psi_2}(\tau_2\sigma_2)\tau_2\sigma_2\models\psi_2,\quad f_{\psi_2}(\tau_1\sigma_2)\tau_1\sigma_2\models\psi_2.
        \end{equation*}
Notice from the above that $f_{\psi_2}(\tau_2\sigma_2)\tau_2$ and $f_{\psi_2}(\tau_1\sigma_2)\tau_1$ have the same extension $\sigma_2$ to satisfy $\psi_2$, and therefore, by reducibility and Remark~\ref{r:extension-exchange},  we can replace $f_{\psi_2}(\tau_2\sigma_2)\tau_2$ in (\ref{eq:redclaim1}) by  $f_{\psi_2}(\tau_1\sigma_2)\tau_1$ to get 
        \[
            f_{\psi_1}(\tau_1\sigma_1)f_{\psi_2}(\tau_1\sigma_2)\tau_1\eta'\models\psi_0,
        \]
which yields the following by swapping the positions of the spins in the string:
\begin{equation}\label{eq:redclaim2}
f_{\psi_1}(\tau_1\sigma_1)\tau_1f_{\psi_2}(\tau_1\sigma_2)\eta'\models\psi_0.
\end{equation}
        
        Similarly, we have
        \begin{equation*}
            f_{\psi_1}(\tau_1\sigma_1)\tau_1\sigma_1\models{\psi_1},\quad f_{\psi_1}(\tau_2\sigma_1)\tau_2\sigma_1\models{\psi_1},
        \end{equation*}
and        so we can use reducibility and Remark~\ref{r:extension-exchange} again to replace $f_{\psi_1}(\tau_1\sigma_1)\tau_1$ by $f_{\psi_1}(\tau_2\sigma_1)\tau_2$ in (\ref{eq:redclaim2}) to get 
        \begin{equation*}
            f_{\psi_1}(\tau_2\sigma_1)\tau_2f_{\psi_2}(\tau_1\sigma_2)\eta'\models\psi_0,
        \end{equation*}
        which yields
        \begin{equation}
        f_{\psi_1}(\tau_2\sigma_1)f_{\psi_2}(\tau_1\sigma_2)\tau_2\eta'\models\psi_0.\label{eq:lastline}
        \end{equation}
        Therefore, by~\eqref{eq:redclaim1} and~\eqref{eq:lastline}, $\eta=\tau_2\eta'$ satisfies the claim.
    \qed\smallskip

    For the converse direction, suppose every function in $F_2(\Psi)$ is symmetric. Let $\psi_1,\psi_2\in \Psi$. Let $d\in \{0,\ldots, k-1\}$ and let $\sigma_1,\sigma_2\in\Omega^d$, $\eta_1,\eta_2\in \Omega^{k-d}$ be such that $\sigma_1\eta_1,\sigma_2\eta_1\models \psi_1$ and $\sigma_1\eta_2\models \psi_2$. Let $\eta_1',\eta_2'\in \Omega^{k-d-1}$ be the prefixes of $\eta_1,\eta_2$ respectively. Since $\sigma_1\eta_1,\sigma_2\eta_1\models \psi_1$, we have $\sigma_1\eta_1'f_{\psi_1}(\sigma_1\eta_1')=\sigma_1\eta_1$ and $\sigma_2\eta_1'f_{\psi_1}(\sigma_2\eta_1')=\sigma_2\eta_1$, and in particular $f_{\psi_1}(\sigma_1\eta_1')=f_{\psi_1}(\sigma_2\eta_1')$. Now, since every function in $F_2(\Psi)$ is symmetric, for any $\eta_3,\ldots \eta_{k-1}\in \Omega^{k-1}$,
    \begin{equation*}
        f_{\psi_1}\bigl (f_{\psi_1}(\sigma_1\eta_1')f_{\psi_2}(\sigma_2\eta_2')f_{\psi_1}(\eta_3)\ldots f_{\psi_{1}}(\eta_{k-1})\bigr )=f_{\psi_1}\bigl (f_{\psi_1}(\sigma_2\eta_1')f_{\psi_2}(\sigma_1\eta_2')f_{\psi_1}(\eta_3)\ldots f_{\psi_{1}}(\eta_{k-1})\bigr )
    \end{equation*}
    and substituting $f_{\psi_1}(\sigma_1\eta_1')$ for $f_{\psi_1}(\sigma_2\eta_1')$ in the right hand side gives
    \begin{equation*}
        f_{\psi_1}\bigl (f_{\psi_1}(\sigma_1\eta_1')f_{\psi_2}(\sigma_2\eta_2')f_{\psi_1}(\eta_3)\ldots f_{\psi_{1}}(\eta_{k-1})\bigr )=f_{\psi_1}\bigl (f_{\psi_1}(\sigma_1\eta_1')f_{\psi_2}(\sigma_1\eta_2')f_{\psi_1}(\eta_3)\ldots f_{\psi_{1}}(\eta_{k-1})\bigr ).
    \end{equation*}
    Thus, by unique extendability of $\psi_1$, we get $f_{\psi_2}(\sigma_2\eta_2')=f_{\psi_2}(\sigma_1\eta_2')$. Since $\sigma_1\eta_2\models \psi_2$, we also have $\sigma_1\eta_2'f_{\psi_{2}}(\sigma_1\eta_2')=\sigma_1\eta_2$, and so $\eta_2=\eta_2'f_{\psi_{2}}(\sigma_1\eta_2')=\eta_2'f_{\psi_{2}}(\sigma_2\eta_2')$. Therefore $\sigma_2\eta_2=\sigma_2\eta_2'f_{\psi_{2}}(\sigma_2\eta_2')\models \psi_2$.
\end{proof}

\section{Non-group UE constraint functions}\label{sec:rediciblethm}
Theorem~\ref{symthm} fully charaterises constraint functions that are equivalent to group constraint functions. In this section, for each $k\ge 3$, we show that not all commutative constraint functions in $\Lambda_{k,r}$ are equivalent to group constraint functions. We give a concrete construction of a family of $k$-ary commutative UE constriant functions $\psi$ such that $F_2(\psi)$ is not symmetric, and consequently $\psi$ is not equivalent to any group constraint function. In particular, these constraint functions are  reducible but not symmetric when $k=3$ and are not reducible when $k\ge 4$.

Our construction relies on an operation that permutes the spins, given as follows. 
Let $\Omega_1$ and $\Omega_2$ be two sets of spins. A map $\theta: \Omega_1^k\mapsto \Pi(\Omega_2)$ is symmetric, if $\theta_{(\tau_1,\dots, \tau_k)}=\theta_{(\tau_{\pi(1)},\dots, \tau_{\pi(k)})}$ for all spins $\tau_1,\dots, \tau_k\in \Omega_1$ and permutations $\pi\in \Pi([k])$. 

\begin{definition}
    Let $\psi_1,\psi_2$ be $k$-ary UE commutative constraints on spin sets $\Omega_1$ and $\Omega_2$ respectively. Let $\theta: \Omega_1^k\mapsto \Pi(\Omega_2)$ be symmetric. Define a $k$-ary constraint $\psi_1\times_\theta \psi_2$ with spin set $\Omega_1\times \Omega_2$ by setting, for all $\alpha=(\alpha_1,\dots, \alpha_k)\in \Omega_1^k$ and $(\beta_1,\dots, \beta_k)\in\Omega_2^k$,
    \begin{equation*}
        ((\alpha_1,\beta_1),\dots,(\alpha_k,\beta_k))\models \psi_1\times_\theta \psi_2\iff (\alpha_1,\dots,\alpha_k)\models \psi_1\text{ and }(\theta_\alpha(\beta_1),\dots, \theta_\alpha(\beta_k))\models\psi_2.
    \end{equation*}
\end{definition}
\begin{lemma}\label{prodUEcom}
    Let $\psi_1,\psi_2$ be $k$-ary UE commutative constraints on spin sets $\Omega_1$ and $\Omega_2$, and let $\theta: \Omega_1^k\to \Pi(\Omega_2)$ be symmetric. Then $\psi_1\times_\theta \psi_2$ is a $k$-ary UE commutative constraint.
\end{lemma}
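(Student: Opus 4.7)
The plan is to verify the two assertions of the lemma separately: that $\psi_1\times_\theta\psi_2$ is commutative, and that it is uniquely extendable. Both follow from the stated properties of $\psi_1,\psi_2,\theta$ by direct unpacking of the definition, so the proof will be essentially bookkeeping; the only conceptual point is where the symmetry of $\theta$ gets used.

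For commutativity, I would fix $\pi\in\Pi([k])$ and spins $(\alpha_j,\beta_j)\in\Omega_1\times\Omega_2$ for $j\in[k]$, and show that $((\alpha_1,\beta_1),\ldots,(\alpha_k,\beta_k))\models\psi_1\times_\theta\psi_2$ iff its $\pi$-permutation also satisfies $\psi_1\times_\theta\psi_2$. Writing $\alpha=(\alpha_1,\ldots,\alpha_k)$, commutativity of $\psi_1$ handles the first-coordinate condition $\alpha\models\psi_1\iff \alpha^\pi\models\psi_1$. The second-coordinate condition for the permuted tuple reads $(\theta_{\alpha^\pi}(\beta_{\pi(1)}),\ldots,\theta_{\alpha^\pi}(\beta_{\pi(k)}))\models\psi_2$; by symmetry of $\theta$, $\theta_{\alpha^\pi}=\theta_\alpha$, so this becomes $(\theta_\alpha(\beta_{\pi(1)}),\ldots,\theta_\alpha(\beta_{\pi(k)}))\models\psi_2$, which is just a $\pi$-permutation of $(\theta_\alpha(\beta_1),\ldots,\theta_\alpha(\beta_k))$ and hence holds by commutativity of $\psi_2$.

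For unique extendability, I would fix an index $i\in[k]$ and spins $(\alpha_j,\beta_j)$ for $j\in[k]\setminus\{i\}$, and produce a unique $(\tau,\gamma)\in\Omega_1\times\Omega_2$ such that the resulting $k$-tuple (with $(\tau,\gamma)$ in position $i$) satisfies $\psi_1\times_\theta\psi_2$. Any such $(\tau,\gamma)$ must in particular make the first coordinates satisfy $\psi_1$, and unique extendability of $\psi_1$ forces a single choice of $\tau\in\Omega_1$. Fixing this $\tau$ and writing $\alpha'=(\alpha_1,\ldots,\alpha_{i-1},\tau,\alpha_{i+1},\ldots,\alpha_k)$, the second-coordinate requirement becomes that $(\theta_{\alpha'}(\beta_1),\ldots,\theta_{\alpha'}(\beta_{i-1}),\theta_{\alpha'}(\gamma),\theta_{\alpha'}(\beta_{i+1}),\ldots,\theta_{\alpha'}(\beta_k))\models\psi_2$. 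By unique extendability of $\psi_2$, there is a unique $\delta\in\Omega_2$ fitting in slot $i$; since $\theta_{\alpha'}\in\Pi(\Omega_2)$ is a bijection, this forces $\gamma=\theta_{\alpha'}^{-1}(\delta)$, giving existence and uniqueness.

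The main (minor) obstacle is simply making sure the symmetry hypothesis on $\theta$ is invoked at exactly the right place in the commutativity argument; without it, $\theta_{\alpha^\pi}$ need not equal $\theta_\alpha$ and the second-coordinate satisfaction would not transfer under the permutation. Otherwise the proof is a short direct verification, and no further machinery from the paper is needed.
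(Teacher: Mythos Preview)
Your proposal is correct and matches the paper's proof essentially line for line: commutativity via symmetry of $\theta$ together with commutativity of $\psi_1,\psi_2$, and unique extendability by first using UE of $\psi_1$ to pin down the $\Omega_1$-coordinate, then UE of $\psi_2$ plus bijectivity of $\theta_{\alpha'}$ to pin down the $\Omega_2$-coordinate. The only cosmetic difference is that the paper invokes the already-proved commutativity to reduce the UE check to the last coordinate, whereas you handle an arbitrary position $i$ directly.
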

\begin{proof}
    Let $\alpha=(\alpha_1,\dots, \alpha_k)\in \Omega_1^k$,  $\beta=(\beta_1,\dots, \beta_k)\in\Omega_2^k$, and  $\pi\in\Pi([k])$. Let $\alpha'=(\alpha_{\pi(1)},\dots,\alpha_{\pi(k)})$ and $\beta'=(\beta_{\pi(1)},\dots, \beta_{\pi(k)})$. Since $\theta$ is symmetric, $\theta(\alpha)=\theta(\alpha')$. Thus, by the commutativity of $\psi_1,\psi_2$, we get
    \begin{align*}
        ((\alpha_1,\beta_1),\dots,(\alpha_k,\beta_k))\models \psi_1\times_\theta \psi_2&\iff \alpha\models \psi_1\text{ and }(\theta_\alpha(\beta_1),\dots, \theta_\alpha(\beta_k))\models\psi_2,\\
        &\iff \alpha'\models \psi_1\text{ and }(\theta_{\alpha'}(\beta_{\pi(1)}),\dots, \theta_{\alpha'}(\beta_{\pi(k)}))\models\psi_2,\\
        &\iff ((\alpha_{\pi(1)},\beta_{\pi(1)}),\dots,(\alpha_{\pi(k)},\beta_{\pi(k)}))\models \psi_1\times_\theta \psi_2.
    \end{align*}
    Therefore $\psi_1\times_\theta\psi_2$ is commutative.
    
    To show that $\psi_1\times_\theta\psi_2$ is UE, by commutativity it suffices to fix an assignment of the first $k-1$ arguments of $\psi_1\times_\theta\psi_2$, and show that the last argument must be uniquely determined in order to satisfy $\psi_1\times_\theta\psi_2$. Let $(\alpha_1,\beta_1),\dots, (\alpha_{k-1},\beta_{k-1})\in \Omega_1\times\Omega_2$. Since $\psi_1$ is UE, there is a unique $\alpha_k\in \Omega_1$ such that $\alpha=(\alpha_1,\dots, \alpha_k)\models \psi_1$. Since $\psi_2$ is UE, there is a unique $\beta'_{k}\in \Omega_2$ such that $(\theta_{\alpha}(\beta_1),\dots, \theta_{\alpha}(\beta_{k-1}),\beta'_k)\models \psi_2$. Since $\theta_{\alpha}$ is a permutation, $\beta=\theta_{\alpha}^{-1}(\beta'_k)$ is the unique spin in $\Omega_2$ such that $(\theta_{\alpha}(\beta_1),\dots, \theta_{\alpha}(\beta_{k}))\models \psi_2$. Therefore $(\alpha_k,\beta_k)$ is the unique spin in $\Omega_1\times \Omega_2$ such that $((\alpha_1,\beta_1),\dots, (\alpha_k,\beta_k))\models \psi_1\times_\theta \psi_2$.
\end{proof}
\begin{theorem}
    Suppose $k\geq 3$ and suppose $r=r_1r_2$, where $r_1\geq 3$. Suppose $r_2\geq 3$, or that $r_2=2$ and $k$ is odd. Then there is a $k$-ary commutative UE constraint on a spin set of size $r$ that is not equivalent to any group constraint.
\end{theorem}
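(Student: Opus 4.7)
The plan is to invoke the product construction from Lemma~\ref{prodUEcom}: take $\Omega_1 = \ZZ_{q_1}$ with $\psi_1 = \psi_{\ZZ_{q_1}, 0}$, take $\Omega_2 = \ZZ_{q_2}$ with $\psi_2 = \psi_{\ZZ_{q_2}, b}$ for some $b \in \ZZ_{q_2}$ to be chosen, and build $\psi = \psi_1 \times_\theta \psi_2$ using a symmetric twist $\theta \colon \Omega_1^k \to \Pi(\Omega_2)$. Both $\psi_1$ and $\psi_2$ are commutative and UE, so Lemma~\ref{prodUEcom} guarantees the same for $\psi$ on a spin set of size $q_1 q_2 = q$.

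The remaining task is to certify that $\psi$ is not equivalent to any group constraint; for that I would use Theorem~\ref{symthm} to reduce to exhibiting some $f \in F_2(\{\psi\})$ that is not symmetric. Unwinding definitions, with $\sigma_i = (\alpha_i, \beta_i)$, the map $f_\psi(\sigma_1, \ldots, \sigma_{k-1})$ has first coordinate $-\sum_{i=1}^{k-1}\alpha_i$ and second coordinate $\theta_\alpha^{-1}\bigl(b - \sum_{i=1}^{k-1} \theta_\alpha(\beta_i)\bigr)$, where $\alpha = (\alpha_1, \ldots, \alpha_{k-1}, -\sum_i \alpha_i)$. I would choose $\theta$ to be nontrivial on a single multiset class, namely $\theta_\alpha = \tau$ when $\alpha = (0, \ldots, 0)$ and $\theta_\alpha = \mathrm{id}$ otherwise, which is automatically symmetric. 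Then for $f = f_\psi \circ (f_\psi, \ldots, f_\psi) \in F_2(\{\psi\})$, the inner applications can be arranged so that the twist is activated on some blocks; under a single transposition of inputs across blocks, the set of activated blocks changes, and the outer $f_\psi$ receives different second-coordinate inputs, producing two distinct values and witnessing asymmetry of $f$.

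The main obstacle is a delicate arithmetic check that the twist can be chosen to genuinely alter the second coordinate for the specific $(q_1, q_2, k)$ in the hypothesis. Writing $\tau(x) = cx + d$ when $\tau$ is affine on $\ZZ_{q_2}$, a direct computation gives a twist difference of $(c^{-1} - 1)b - c^{-1} k d$ in $\ZZ_{q_2}$, and one must choose $(b, c, d)$ so that this quantity is nonzero. For $q_2 \geq 4$, one may use $b = 0$ together with a non-affine $\tau$, or affine $\tau$ with $b$ tuned to make the difference nonzero; for $q_2 = 3$ every permutation is affine, and taking $b \neq 0$ and $c = -1$ yields difference $b \neq 0$; for $q_2 = 2$ the only non-identity permutation is $\tau(x) = x + 1$, whose twist difference equals $k \pmod 2$, which is nonzero exactly when $k$ is odd, accounting for the parity hypothesis. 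Having secured a nonzero difference, I would feed it through the outer $f_\psi$, which either leaves it as an additive shift or conjugates it by $\tau$, and in either case retains distinctness of the two outputs of $f$, completing the construction.
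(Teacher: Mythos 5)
Your proposal takes essentially the same approach as the paper: the twisted product $\psi_{\ZZ_{q_1},0}\times_\theta\psi_{\ZZ_{q_2},b}$ with $\theta$ nontrivial only at the all-zeros tuple, reduction via Theorem~\ref{symthm} to exhibiting an asymmetric element of $F_2(\{\psi\})$, and a case analysis on $q_2$ and $k$ that correctly recovers the parity constraint for $q_2=2$; your twist-difference formula matches the arithmetic the paper does with its specific choice of the $(0,1)$-transposition and $b$ set by $k\pmod{q_2}$. One small imprecision to note: the paper arranges the witnessing inputs so that the twist never fires at the inner level (the inner blocks $\sigma,\sigma',\sigma''$ all have nonzero first-coordinate entries, which is precisely where $q_1\ge 3$ is used) and the transposition only toggles whether the single outer application of $f_\psi$ sees the all-zero $\alpha$, whereas your sketch speaks of activating the twist on some inner blocks — the outer-only version is cleaner, as it avoids having to track how a transposition simultaneously changes inner second coordinates and possibly the outer twist state.
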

\begin{proof} 
    Let $r=r_1r_2$, where $r_1\geq 3$ and $r_2\geq 2$. Set $b=1$ if $k\neq 2\pmod {r_2}$ and set $b=0$ if $k= 2 \pmod {r_2}$. We consider the constraints $\psi_{\ZZ_{r_1},0}$ and $\psi_{\ZZ_{r_2},b}$, and let $\theta:\ZZ_{r_1}\to \Pi(\ZZ_{r_2})$ be such that $\theta_{(0,\dots, 0)}$ is the transposition of $0$ and $1$, and $\theta_\sigma=\Id$ for all $\sigma\in \ZZ_{r_1}^k\setminus\{(0,\dots, 0)\}$. By Lemma \ref{prodUEcom}, $\psi=\psi_{\ZZ_{r_1},0}\times_\theta \psi_{\ZZ_{r_2},1}$ is a $k$-ary commutative UE constraint on $\ZZ_{r_1}\times \ZZ_{r_2}$. We show that $\psi$ is not equivalent to any group constraint function. Define $f$ by
    \begin{align*}
        f(\sigma_1\ldots \sigma_{k-1})=f_\psi\bigl (f_\psi(\sigma_1)\ldots f_\psi(\sigma_{k-1})\bigr ),\quad \sigma_1,\ldots, \sigma_{k-1}\in \Omega^{k-1}
    \end{align*}
    Let $\eta\in (\ZZ_{r_1}\times \ZZ_{r_2})^{k-1}$ be such that $f_\psi(\eta)=(0,1)$. Let
    \begin{align*}
        \sigma &= ((-1,0),(1,0),(0,0),\ldots, (0,0))\in (\ZZ_{r_1}\times \ZZ_{r_2})^{k-1}\\
        \sigma' &= ((1,0),(1,0),(0,0),\ldots, (0,0))\in (\ZZ_{r_1}\times \ZZ_{r_2})^{k-1}\\
        \sigma'' &= ((-1,0),(-1,0),(0,0),\ldots, (0,0))\in (\ZZ_{r_1}\times \ZZ_{r_2})^{k-1}.
    \end{align*}
    Since $f\in F_2(\{\psi\})$, by Theorem \ref{symthm},  it suffices to prove that
    \begin{equation*}
        f(\sigma'\sigma''\eta^{k-3})\neq f(\sigma\sigma\eta^{k-3}).
    \end{equation*}
    Indeed,
    \begin{align*}
        f_\psi(\sigma) = (0,b),\quad f_\psi(\sigma')=(-2,b),\quad f_\psi(\sigma'')=(2,b),
    \end{align*}
and so we have
    \[
       f(\sigma'\sigma''\eta^{k-3})=f_\psi((-2,b),(2,b),(0,1),\ldots, (0,1))=(0,-b+3-k).
    \]
    On the other hand, since $\theta_{(0,\ldots, 0)}$ permutes $0$ and $1$,
    \begin{align*}
        f(\sigma\sigma\eta^{k-3})=f_\psi((0,b),(0,b),(0,1),\dots,(0,1))=\begin{cases}
            (0,0) & b=1\\
            (0,\theta_{(0,\ldots, 0)}^{-1}(-2)) & b=0.
        \end{cases}
    \end{align*}

If $b=1$ then $k\neq 2\pmod {r_2}$, so $-b+3-k=2-k\neq 0\pmod{r_2}$. If $b=0$ then $k=2\pmod{r_2}$ and moreover $r_2\ge 3$, as the case $r_2=2$ and $k$ being even is excluded by the hypotheses of the theorem. In this case we have $-b+3-k=3-k=1\pmod{r_2}$. If $r_2\geq 4$ then $\theta_{(0,\ldots, 0)}^{-1}(-2)=-2\neq 1\pmod{r_2}$. If $r_2=3$, then $\theta_{(0,\ldots, 0)}^{-1}(-2)=0\neq 1\pmod{r_2}$. Therefore $f(\sigma'\sigma''\eta^{k-3})\neq f(\sigma\sigma\eta^{k-3})$.
\end{proof}

\section*{Acknowledgement}
We thank Noela M\"{u}ller for bringing this problem to our attention and for the initial discussions.

 \bibliographystyle{abbrv}
 \bibliography{theo}

\section*{Appendix}\label{apdx:quazigroups}

Here we briefly sketch how the special cases of Theorems~\ref{symthm} and~\ref{cor:reducible} when $\Psi=\{\psi\}$, restated below, can be alternatively proved using quasigroup theory.  Recall that $f^{(i)}_\psi$ denotes the unique function in $F_i(\{\psi\})$.

\begin{theorem}\label{thm:specialcase}
        Let $\psi$ be a $k$-ary UE commutative constraint function with $k\geq 3$. Then $\psi$ is equivalent to a group constraint function if and only if $f^{(2)}_\psi$ is a symmetric function.
\end{theorem}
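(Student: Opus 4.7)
The plan is to recast the statement in the language of quasigroups and then apply the $n$-ary Bruck--Toyoda theorem. Since $\psi$ is uniquely extendable, $f_\psi:\Omega^{k-1}\to\Omega$ defines a $(k-1)$-ary quasigroup operation on $\Omega$, and commutativity of $\psi$ says that the $k$-tuple $(x_1,\ldots,x_{k-1},f_\psi(x_1,\ldots,x_{k-1}))$ is preserved in the satisfaction set of $\psi$ under all coordinate permutations; equivalently, $f_\psi$ is \emph{totally symmetric}. A consequence of total symmetry (applied to a cyclic permutation) that will be used later is
\[
f_\psi\bigl(f_\psi(x_1,\ldots,x_{k-1}),\,x_1,\ldots,x_{k-2}\bigr)=x_{k-1},\qquad x_1,\ldots,x_{k-1}\in\Omega.
\]

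The next observation is that, given the symmetry of $f_\psi$ itself (a consequence of commutativity), symmetry of $f_\psi^{(2)}$ in all $(k-1)^2$ entries is equivalent to $f_\psi$ being \emph{medial} (entropic): rearranging the entries of $\sigma_1\cdots\sigma_{k-1}\in\Omega^{(k-1)^2}$ into any new block structure $\tilde\sigma_1,\ldots,\tilde\sigma_{k-1}\in\Omega^{k-1}$ preserves the value of $f_\psi(f_\psi(\sigma_1),\ldots,f_\psi(\sigma_{k-1}))$. This is a straightforward unpacking of the definition of $f_\psi^{(2)}$.

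Now I would invoke the $(k-1)$-ary Bruck--Toyoda theorem: a medial $(k-1)$-ary quasigroup on $\Omega$ admits a representation
\[
f_\psi(x_1,\ldots,x_{k-1})=\varphi_1(x_1)+\cdots+\varphi_{k-1}(x_{k-1})+c
\]
for some abelian group structure $(\Omega,+)$ on the same underlying set, commuting automorphisms $\varphi_1,\ldots,\varphi_{k-1}$, and $c\in\Omega$. Symmetry of $f_\psi$ forces $\varphi_1=\cdots=\varphi_{k-1}=:\varphi$. Substituting this form into the displayed identity above and comparing coefficients of each $x_i$ yields $\varphi^2=\mathrm{id}$ (from the coefficient of $x_{k-1}$), and then $\mathrm{id}+\varphi=0$ (from any other coordinate, using $\varphi^2=\mathrm{id}$), so $\varphi=-\mathrm{id}$. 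Hence $f_\psi(x_1,\ldots,x_{k-1})=-x_1-\cdots-x_{k-1}+c$, i.e.\ $\psi=\psi_{(\Omega,+),c}$, proving the forward direction. The converse is immediate: if $\psi=\psi_{(\Omega,+),b}$ in an abelian group, then $f_\psi(x_1,\ldots,x_{k-1})=b-\sum_i x_i$ and $f_\psi^{(2)}$ becomes an affine function of all $(k-1)^2$ entries, hence symmetric.

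The principal obstacle is the invocation of the $(k-1)$-ary Bruck--Toyoda theorem itself: the binary case $k=3$ is classical, but for $k\ge 4$ one must either cite a suitable reference for the $n$-ary version or reduce to the binary case by fixing all but two coordinates of $f_\psi$ to obtain commutative medial binary quasigroups, apply binary Bruck--Toyoda to each, and reassemble the resulting abelian group structures into a single global one consistent across all such specialisations. Some care is also required to verify that the group supplied by Bruck--Toyoda is supported on $\Omega$ itself (the usual formulation), so that the identification $\psi=\psi_{(\Omega,+),c}$ is genuine rather than merely up to isotopy.
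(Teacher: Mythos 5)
Your proposal is correct, but it takes a genuinely different route from the paper's Appendix argument. The paper first \emph{derives} an auxiliary $k$-ary quasigroup $f'_{\psi,\alpha}$ (raising the arity by one) that has a neutral element, then applies D\"ornte's theorem, which characterizes when an $n$-ary quasigroup with neutral element comes from a binary group as associativity of the operation, and finally checks that associativity of $f'_{\psi,\alpha}$ is equivalent to symmetry of $f_\psi^{(2)}$. You instead stay at arity $k-1$, observe that (given commutativity of $\psi$, hence symmetry of $f_\psi$) symmetry of $f_\psi^{(2)}$ is equivalent to mediality of the $(k-1)$-quasigroup $(\Omega,f_\psi)$, and invoke the $n$-ary Toyoda--Evans theorem for medial $n$-quasigroups to get the affine form $f_\psi(x_1,\dots,x_{k-1})=\varphi(x_1)+\dots+\varphi(x_{k-1})+c$ over an abelian group, after which total symmetry pins down $\varphi=-\mathrm{id}$. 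This is cleaner in one respect: you obtain the abelian group directly, whereas D\"ornte produces an a priori nonabelian group and abelianness has to be extracted separately from commutativity of $\psi$ (as the paper does in the proof of its Theorem on group constraints). On the other hand, D\"ornte's 1929 theorem is arguably more elementary and better documented for general $n$ than the $n$-ary Toyoda theorem; your concluding paragraph correctly identifies that the citation and the ``same underlying set'' issue are the delicate points.

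Two small matters to tighten. First, the equivalence of symmetry of $f_\psi^{(2)}$ with mediality of $f_\psi$ (given symmetry of $f_\psi$) deserves a line: mediality is the invariance of $f_\psi^{(2)}$ under the transpose permutation of the $(k-1)\times(k-1)$ array of entries, and together with row-permutations, column-permutations (row-permutations conjugated by the transpose), and block-permutations coming from symmetry of $f_\psi$, the generated permutation group is all of $S_{(k-1)^2}$, since for any two positions $(i_1,j_1),(i_2,j_2)$ one can route through $(i_1,j_2)$ to realise the transposition. Second, in the converse direction you write ``if $\psi=\psi_{(\Omega,+),b}$ in an abelian group''; the definition of a group constraint in the paper does not require the group to be abelian, so you should note (as the paper does) that commutativity of $\psi$ forces $g_1+g_2=g_2+g_1$ by comparing the two satisfying tuples $g_1g_2e^{k-2}\beta$ and $g_2g_1e^{k-2}\beta$, after which the affine-hence-symmetric argument goes through.
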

\begin{theorem}\label{thm:specialcasereducible}
        Let $\psi$ be a $k$-ary UE commutative constraint function with $k\geq 4$. Then $\psi$ is equivalent to a group constraint function if and only if $\psi$ is reducible.
\end{theorem}

\renewcommand{\thesection}{A}
\setcounter{theorem}{0}
\subsection*{A\ \  Symmetry and proof of Theorem~\ref{thm:specialcase}}
\begin{definition}
    Let $\Omega$ be a nonempty set and let $f:\Omega^k\to \Omega$. Then $(\Omega,f)$ is a $k$-quasigroup if, for all $d\in [k]$ and $\tau_0,\tau_1,\ldots,\tau_{d-1},\tau_{d+1},\ldots \tau_k\in \Omega$, there is a unique $\tau \in \Omega$ such that 
    \begin{equation*}
        f(\tau_1\ldots \tau_{d-1}\tau\tau_{d+1}\ldots \tau_k)=\tau_0
    \end{equation*}
\end{definition}
\begin{definition}
    Let $(\Omega,f)$ be a $k$-quasigroup and let $\alpha\in \Omega$.  Then $\alpha$ is neutral in $(\Omega,f)$ if, for all $\tau\in \Omega$ and $i\in [k]$
    \begin{equation*}
        f(\alpha^{i-1}\tau\alpha^{k-i})=\tau.
    \end{equation*}
\end{definition}
\begin{definition}
    Let $\psi$ be a $k$-ary UE constraint. Fix $\alpha\in \Omega$ and let $\beta=f_\psi(\alpha\ldots \alpha)$. Let $f'_{\psi,\alpha}:\Omega^k\to \Omega$ be given by
\begin{equation*}
    f'_{\psi,\alpha}(\tau_1\ldots \tau_k)=f_\psi(f_\psi(\tau_1\ldots \tau_{k-1})f_\psi(\tau_k\alpha^{k-3}\beta)f_\psi(\alpha^{k-2}\beta)\ldots f_\psi(\alpha^{k-2}\beta))
\end{equation*}
\end{definition}
By definition it is straightforward to prove the following.
\begin{lemma}
    Suppose $\psi$ is UE and let $\alpha\in \Omega$. Then $(\Omega,f_{\psi,\alpha}')$ is a $k$-quasigroup with neutral element $\alpha$.
\end{lemma}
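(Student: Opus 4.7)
The plan is to verify the two claims directly from the definition by repeatedly invoking the unique extendability of $\psi$ and its commutativity. A useful preliminary step is to simplify $f'_{\psi,\alpha}$. Since $\beta = f_\psi(\alpha^{k-1})$, we have $\alpha^{k-1}\beta \models \psi$, and commutativity together with unique extendability gives $f_\psi(\alpha^{k-2}\beta) = \alpha$. Hence the $k-3$ trailing copies of $f_\psi(\alpha^{k-2}\beta)$ in the definition collapse to $\alpha$, so
\begin{equation*}
f'_{\psi,\alpha}(\tau_1\ldots\tau_k) \;=\; f_\psi\bigl(f_\psi(\tau_1\ldots\tau_{k-1}),\; f_\psi(\tau_k\alpha^{k-3}\beta),\; \alpha^{k-3}\bigr).
\end{equation*}

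To establish the $k$-quasigroup property, I would fix a coordinate $d \in [k]$ and all inputs $\tau_j$ for $j \neq d$, and show that $\tau \mapsto f'_{\psi,\alpha}(\tau_1\ldots\tau_{d-1}\tau\tau_{d+1}\ldots\tau_k)$ is a bijection on $\Omega$. Since $\psi$ is UE, each inner map $f_\psi$ is bijective in every one of its coordinates. If $d \leq k-1$, then $\tau$ appears only at position $d$ inside $f_\psi(\tau_1\ldots\tau_{k-1})$; that inner call is bijective in $\tau$, and the outer $f_\psi$ is bijective in its first argument, so the composition is bijective. If $d = k$, then $\tau$ appears only at the first position of $f_\psi(\tau_k\alpha^{k-3}\beta)$, which is bijective in $\tau$, and the outer $f_\psi$ is bijective in its second argument. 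Either way the composed map is a bijection on $\Omega$, which is the $k$-quasigroup condition.

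To check that $\alpha$ is neutral, I would split on whether the unique non-$\alpha$ input lies at position $i < k$ or at position $i = k$. For $i < k$, the input $\tau_k = \alpha$ forces the second inner call to be $f_\psi(\alpha^{k-2}\beta) = \alpha$, so
\begin{equation*}
f'_{\psi,\alpha}(\alpha^{i-1}\tau\alpha^{k-i}) = f_\psi(\gamma\,\alpha^{k-2}),\qquad \gamma := f_\psi(\alpha^{i-1}\tau\alpha^{k-1-i}).
\end{equation*}
By definition $\alpha^{i-1}\tau\alpha^{k-1-i}\gamma \models \psi$, and commutativity rearranges this to $\gamma\alpha^{k-2}\tau \models \psi$, so $f_\psi(\gamma\alpha^{k-2}) = \tau$. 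For $i = k$, the first inner call gives $f_\psi(\alpha^{k-1}) = \beta$, the second gives $\gamma := f_\psi(\tau\alpha^{k-3}\beta)$ satisfying $\tau\alpha^{k-3}\beta\gamma \models \psi$, and then commutativity rewrites this as $\beta\gamma\alpha^{k-3}\tau \models \psi$, so $f_\psi(\beta,\gamma,\alpha^{k-3}) = \tau$. In both cases $f'_{\psi,\alpha}(\alpha^{i-1}\tau\alpha^{k-i}) = \tau$.

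As the authors remark, the argument is routine; no substantial obstacle arises. The only thing that requires care is bookkeeping: one must correctly account for the arities (the inner and outer $f_\psi$ each take $k-1$ arguments, decomposing as $1+1+(k-3)$ in the outer call) and invoke commutativity to reposition $\tau$, $\beta$, and $\gamma$ so that unique extendability applies cleanly. The edge case $k=3$ (no trailing $\alpha^{k-3}$ terms) should also be checked, but goes through with the same computations.
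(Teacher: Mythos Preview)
Your proof is correct and is exactly the routine verification the paper has in mind when it says the lemma ``follows straightforwardly by definition''; there is no proof in the paper to compare against. One small point worth noting: you (necessarily) use commutativity of $\psi$ both for the simplification $f_\psi(\alpha^{k-2}\beta)=\alpha$ and for the neutrality computations, whereas the lemma as stated in the paper only assumes $\psi$ is UE---commutativity is implicit from the surrounding context of the appendix, and your reliance on it is appropriate since neutrality of $\alpha$ does not follow from unique extendability alone.
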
 
The next lemma characterises satisfiability of $\psi$ by quasigroup equations.

\begin{lemma}
    Suppose $\psi$ is $k$-ary, commutative, and UE. Suppose $\alpha\in \Omega$ and $\beta=f_\psi(\alpha\ldots\alpha)$. Then, for $\pi\in \Omega^k$, we have $f'_{\psi,\alpha}(\pi)=\beta$ if and only if $\pi\models \psi$.
\end{lemma}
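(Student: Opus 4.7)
The plan is to unfold the definition of $f'_{\psi,\alpha}$, simplify the auxiliary terms using commutativity, and then deduce the equivalence from two short applications of unique extendability.

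First I would simplify the constant inner term. Since $\beta = f_\psi(\alpha^{k-1})$, we have $\alpha^{k-1}\beta \models \psi$, and commutativity yields $\alpha^{k-2}\beta\alpha \models \psi$, so $f_\psi(\alpha^{k-2}\beta) = \alpha$. Replacing each of the $k-3$ occurrences of $f_\psi(\alpha^{k-2}\beta)$ by $\alpha$, and setting $\eta := f_\psi(\tau_1\cdots\tau_{k-1})$ and $\gamma := f_\psi(\tau_k\alpha^{k-3}\beta)$, the definition collapses to $f'_{\psi,\alpha}(\tau_1\cdots\tau_k) = f_\psi(\eta\gamma\alpha^{k-3})$. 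Hence the claim is equivalent to the statement that $\eta\gamma\alpha^{k-3}\beta \models \psi$ if and only if $\tau_1\cdots\tau_k \models \psi$.

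Next, I would handle the two directions separately. For the forward direction, assume $\tau_1\cdots\tau_k \models \psi$, so by unique extendability $\tau_k = \eta$. The definition of $\gamma$ gives $\tau_k\alpha^{k-3}\beta\gamma \models \psi$, which by commutativity rearranges to $\eta\gamma\alpha^{k-3}\beta \models \psi$, i.e., $f_\psi(\eta\gamma\alpha^{k-3}) = \beta$. Conversely, assume $\eta\gamma\alpha^{k-3}\beta \models \psi$. By commutativity we also have $\eta\alpha^{k-3}\beta\gamma \models \psi$, whence $f_\psi(\eta\alpha^{k-3}\beta) = \gamma = f_\psi(\tau_k\alpha^{k-3}\beta)$. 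Unique extendability of $\psi$ in the first coordinate, applied with the common suffix $\alpha^{k-3}\beta\gamma$, forces $\eta = \tau_k$, and the definition of $\eta$ then gives $\tau_1\cdots\tau_k \models \psi$.

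The whole argument is essentially bookkeeping, and the only point that would require a brief check is the boundary case $k=3$: there $\alpha^{k-3}$ is empty and no copies of $f_\psi(\alpha^{k-2}\beta)$ appear, so the definition reduces to $f'_{\psi,\alpha}(\tau_1\tau_2\tau_3) = f_\psi(\eta\gamma)$ with $\gamma = f_\psi(\tau_3\beta)$, and the same two-line commutativity/UE argument applies verbatim. I do not anticipate a real obstacle; the proof is a direct consequence of the definitions together with the commutativity and unique extendability of $\psi$.
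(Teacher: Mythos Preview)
Your proof is correct and follows essentially the same route as the paper's: both first simplify $f_\psi(\alpha^{k-2}\beta)=\alpha$, then reduce $f'_{\psi,\alpha}(\pi)=\beta$ to the statement $f_\psi(\eta\,\alpha^{k-3}\beta)=f_\psi(\tau_k\,\alpha^{k-3}\beta)$ and finish via the injectivity of $\tau\mapsto f_\psi(\tau\,\alpha^{k-3}\beta)$. The only cosmetic difference is that the paper presents the argument as a single chain of equivalences while you split it into the two directions; your additional remark on the $k=3$ boundary case is a nice sanity check.
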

\begin{proof}
    Suppose $\pi\in\Omega^k$ and write $\pi=\pi_1\ldots \pi_k$. We have
    \begin{align*}
        f'_{\psi,\alpha}(\pi)=\beta&\iff f_\psi(\pi_1\ldots \pi_{k-1})f_\psi(\pi_k\alpha\ldots\alpha\beta)f_\psi(\alpha\ldots \alpha\beta)\ldots f_\psi(\alpha\ldots\alpha\beta)\beta\models \psi\\
        &\iff f_\psi(\pi_1\ldots \pi_{k-1})f_\psi(\pi_k\alpha\ldots \alpha\beta)\alpha\ldots \alpha\beta\models\psi\\
        &\iff f_\psi(f_\psi(\pi_1\ldots \pi_{k-1})\alpha\ldots \alpha\beta)=f_\psi(\pi_k\alpha\ldots \alpha\beta)\\
        &\iff f_\psi(\pi_1\ldots \pi_{k-1})=\pi_k\\
        &\iff\pi_1\ldots \pi_k\models \psi.
    \end{align*}
\end{proof}

{\em Proof of Theorem~\ref{thm:specialcase}.\ }
We apply a theorem that was
cited in a paper~\cite{dudek2006quasigroups} by Wieslaw Dudek and Vladimir Mukhin, which was originally proved by D\"ornte in a German paper~\cite{Dörnte1929}.
\begin{theorem}[D\"ornte]
    Let $(\Omega,f)$ be an $n$-ary quasigroup containing a neutral element. Then there is a binary group $(\Omega,+)$ such that $f(x_1,\ldots, x_n)=x_1+\ldots +x_n$ for all $x_1,\ldots, x_n\in \Omega$ if and only if $f$ is associative.
\end{theorem}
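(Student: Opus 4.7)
I would split the theorem into two directions, with the appendix's quasigroup framework supplying the converse.

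For the ``only if'' direction, suppose $\psi = \psi_{(\Omega,+),b}$ for some group $(\Omega,+)$. Commutativity of $\psi$ first forces $(\Omega,+)$ to be abelian, exactly as in the main proof of Theorem~\ref{symthm}: for any $g_1,g_2\in\Omega$, both $g_1 g_2 e^{k-3} f_\psi(g_1 g_2 e^{k-3})$ and its swap $g_2 g_1 e^{k-3} f_\psi(g_1 g_2 e^{k-3})$ satisfy $\psi$, so $g_1+g_2=g_2+g_1$. With $f_\psi(\sigma_1,\ldots,\sigma_{k-1})=b-\sum_i\sigma_i$, a direct calculation yields
\[
    f^{(2)}_\psi(\eta_1\cdots\eta_{k-1}) \;=\; (2-k)b + \sum_{i=1}^{k-1}\sum_{j=1}^{k-1}\eta_{i,j},
\]
which is manifestly symmetric in its $(k-1)^2$ inputs.

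For the ``if'' direction, assume $f^{(2)}_\psi$ is symmetric. Fix $\alpha\in\Omega$ and set $\beta=f_\psi(\alpha^{k-1})$. The appendix lemmas give us a $k$-ary quasigroup $(\Omega,f'_{\psi,\alpha})$ with $\alpha$ as a neutral element, together with the key equivalence $\pi\models\psi \iff f'_{\psi,\alpha}(\pi)=\beta$. By D\"ornte's theorem, it will suffice to establish associativity of $f'_{\psi,\alpha}$; then we obtain a binary group $(\Omega,+)$ with $f'_{\psi,\alpha}(\tau_1,\ldots,\tau_k)=\tau_1+\cdots+\tau_k$, so that $\psi=\psi_{(\Omega,+),\beta}$ reads off directly from the key equivalence. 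To prove $k$-ary associativity, I would unwind $f'_{\psi,\alpha}$ in terms of $f_\psi$ and exploit the neutrality of $\alpha$ together with unique-extendability of $\psi$ to absorb the filler strings of $\alpha$'s and $\beta$'s that appear in its definition. After this reduction, the two sides of a $k$-ary associativity identity will differ only by moving one argument across a single $f_\psi$-layer in a two-level composition, and that move is exactly an instance of symmetry of $f^{(2)}_\psi$ (after commutativity of $\psi$ is used to align the argument orders).

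\textbf{Main obstacle and fallback.} The bulk of the work is the bookkeeping in the reduction step: the definition of $f'_{\psi,\alpha}$ is structurally asymmetric between its first $k-1$ arguments, its $k$-th argument, and the padding, so tracking how an associativity rearrangement translates through the $\alpha$/$\beta$-cancellations is delicate. As a cleaner alternative that I would pursue in parallel, I would bypass D\"ornte entirely: define $\sigma+\tau := f'_{\psi,\alpha}(\sigma,\tau,\alpha,\ldots,\alpha)$, verify directly that $+$ is a commutative binary group law on $\Omega$ (closure is automatic, $\alpha$ is the identity by neutrality, inverses exist by unique-extendability applied to $\alpha^{k-1}$, commutativity follows from commutativity of $\psi$, and associativity follows from symmetry of $f^{(2)}_\psi$ applied to a carefully chosen two-layer composition), and then show by induction on the number of non-$\alpha$ inputs that $f'_{\psi,\alpha}(\tau_1,\ldots,\tau_k)=\tau_1+\cdots+\tau_k$. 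Either route terminates in the identification $\psi=\psi_{(\Omega,+),\beta}$, and the crux throughout is that the symmetry hypothesis on $f^{(2)}_\psi$ is precisely what makes the natural binary operation associative.
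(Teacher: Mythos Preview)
Your proposal does not address the stated theorem at all. The statement in question is D\"ornte's theorem: an $n$-ary quasigroup with a neutral element comes from a binary group if and only if $f$ is associative. In the paper this theorem is not proved; it is \emph{cited} from the literature (D\"ornte~1929, via Dudek--Mukhin) and used as a black box inside the appendix proof of Theorem~\ref{thm:specialcase}.

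What you have written is a proof plan for Theorem~\ref{thm:specialcase} (that a $k$-ary UE commutative constraint $\psi$ is a group constraint iff $f^{(2)}_\psi$ is symmetric), not for D\"ornte's theorem. You explicitly invoke D\"ornte's theorem in your ``if'' direction rather than proving it, and your ``fallback'' route, while it avoids quoting D\"ornte, is still phrased entirely in terms of the specific objects $\psi$, $f'_{\psi,\alpha}$, $\alpha$, $\beta$ from the paper's setup rather than in terms of an abstract $n$-ary quasigroup $(\Omega,f)$ with a neutral element and an associativity hypothesis. A proof of D\"ornte's theorem would instead start from an associative $n$-ary quasigroup with neutral element $e$, define $x+y:=f(x,y,e,\ldots,e)$, and verify directly that $+$ is a group operation and that $f(x_1,\ldots,x_n)=x_1+\cdots+x_n$; the symmetry of $f^{(2)}_\psi$ and the constraint $\psi$ play no role in that argument.
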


With the help of D\"ornte's theorem, to complete the proof for Theorem~\ref{thm:specialcase} it suffices to prove the following, which follows by definition of $f'_{\psi,\alpha}$.
\begin{lemma}
    Suppose $\psi$ is $k$-ary, commutative, and UE. Then $f_{\psi,\alpha}'$ is associative for some $\alpha\in \Omega$ if and only if $f_\psi^{(2)}$ is symmetric.
\end{lemma}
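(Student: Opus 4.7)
The plan is to establish both directions by linking the two conditions to the existence of an Abelian group structure on $\Omega$ that makes $\psi$ a group constraint. The forward direction is driven by D\"ornte's theorem, while the reverse direction proceeds by direct manipulation of nested $f_\psi$-expressions using the symmetry of higher-order compositions, since this appendix aims to give an alternative proof of Theorem~\ref{thm:specialcase} and so cannot invoke Theorem~\ref{symthm}.

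For the forward direction, assume $f'_{\psi,\alpha}$ is associative. Since $(\Omega, f'_{\psi,\alpha})$ is a $k$-quasigroup with neutral element $\alpha$, D\"ornte's theorem furnishes a binary group $(\Omega, +)$ with identity $\alpha$ such that $f'_{\psi,\alpha}(x_1, \ldots, x_k) = x_1 + \cdots + x_k$. The characterization $f'_{\psi,\alpha}(\pi) = \beta \iff \pi \models \psi$ then identifies $\psi = \psi_{(\Omega,+), \beta}$. Commutativity of $\psi$ forces $+$ to be Abelian: for any $x, y \in \Omega$, by unique extendability we can choose $z_1, \ldots, z_{k-2}$ with $(x, y, z_1, \ldots, z_{k-2}) \models \psi$; commutativity of $\psi$ then gives $(y, x, z_1, \ldots, z_{k-2}) \models \psi$ as well, so $x + y = y + x$ by cancellation. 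In the resulting Abelian group we have $f_\psi(\sigma) = \beta - \sum_i \sigma_i$, and direct substitution yields
\[
f_\psi^{(2)}(\sigma_1 \ldots \sigma_{k-1}) = (2 - k)\beta + \sum_{i, j} \sigma_{i,j},
\]
which depends only on the multiset of inputs and is therefore symmetric.

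For the reverse direction, assume $f_\psi^{(2)}$ is symmetric; by Lemma~\ref{psiicom} applied to $\Psi = \{\psi\}$, every $f_\psi^{(i)}$ with $i \geq 2$ is symmetric. The strategy is to show that (i) $f'_{\psi,\alpha}$ is itself a symmetric function of $(\tau_1, \ldots, \tau_k)$ and (ii) the composition $F(x_1, \ldots, x_{2k-1}) := f'_{\psi,\alpha}(f'_{\psi,\alpha}(x_1, \ldots, x_k), x_{k+1}, \ldots, x_{2k-1})$ is symmetric in all $2k-1$ arguments; these together imply $k$-ary associativity of the symmetric $f'_{\psi,\alpha}$. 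For (i), using the identity $f_\psi(\alpha^{k-2}\beta) = \alpha$ from commutativity, the definition of $f'_{\psi,\alpha}$ rewrites as $f_\psi^{(2)}$ applied to a tuple in which each $\tau_i$ appears exactly once, padded by copies of $\alpha$ and $\beta$; symmetry of $f_\psi^{(2)}$ then gives symmetry in the $\tau_i$'s. For (ii), each $x_i$ can be ``padded down'' to the deepest level by using the neutral identity $x = f'_{\psi,\alpha}(x, \alpha, \ldots, \alpha)$, which itself expresses $x$ as $f_\psi^{(2)}$ of a structured tuple containing one copy of $x$ and fillers; nesting these paddings into the expansion of $F$ yields a single $f_\psi^{(N)}$-expression in which every $x_i$ appears with the same multiplicity. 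Symmetry of $f_\psi^{(N)}$ then gives symmetry of $F$.

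The main obstacle is the bookkeeping in step (ii): verifying that after nesting all the paddings, each $x_i$ indeed appears with the same multiplicity in the flat $f_\psi^{(N)}$-input and that the multiset of auxiliary spins ($\alpha$'s and $\beta$'s) is the same on both sides of any proposed transposition. The definition of $f'_{\psi,\alpha}$ is engineered precisely so that the fillers preserve this balance, using $\beta = f_\psi(\alpha^{k-1})$ and $f_\psi(\alpha^{k-2}\beta) = \alpha$ to absorb padding arguments consistently. Once the multiplicities are verified to match, the symmetry of $f_\psi^{(N)}$ (from Lemma~\ref{psiicom}) delivers the rest.
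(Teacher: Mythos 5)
Your proposal is correct, and it supplies a proof that the paper essentially omits: the paper dismisses this lemma with ``which follows by definition of $f'_{\psi,\alpha}$,'' so there is no detailed argument in the paper to compare against. Your two directions are the natural ones to fill that gap. In the forward direction, D\"ornte's theorem gives a binary group with $f'_{\psi,\alpha}(x_1,\ldots,x_k)=x_1+\cdots+x_k$; combined with the preceding lemma ($f'_{\psi,\alpha}(\pi)=\beta \iff \pi\models\psi$) this identifies $\psi=\psi_{(\Omega,+),\beta}$, and your cancellation argument correctly forces $(\Omega,+)$ to be Abelian so that $f_\psi^{(2)}(\sigma_1\ldots\sigma_{k-1})=(2-k)\beta+\sum_{i,j}\sigma_{i,j}$ is visibly symmetric. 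In the reverse direction, the key step is the observation
\[
f'_{\psi,\alpha}(\tau_1\ldots\tau_k)=f_\psi^{(2)}\bigl(\tau_1\ldots\tau_{k-1},\ \tau_k\alpha^{k-3}\beta,\ (\alpha^{k-2}\beta)^{k-3}\bigr),
\]
which makes step (i) immediate from symmetry of $f_\psi^{(2)}$. For step (ii), the padding identities
\[
x=f_\psi^{(2)}\bigl(x\alpha^{k-2},(\alpha^{k-2}\beta)^{k-2}\bigr),\qquad \alpha=f_\psi^{(2)}\bigl(\alpha^{k-1},(\alpha^{k-2}\beta)^{k-2}\bigr),\qquad \beta=f_\psi^{(2)}\bigl((\alpha^{k-2}\beta)^{k-1}\bigr)
\]
let you flatten $F=f'_{\psi,\alpha}\circ f'_{\psi,\alpha}$ to a single $f_\psi^{(4)}$ in which each $x_i$ occurs exactly once (not merely with equal multiplicity), with the $\alpha$'s and $\beta$'s in fixed positions; symmetry of $f_\psi^{(4)}$ (from Lemma~\ref{psiicom}) then gives full symmetry of $F$, and together with symmetry of $f'_{\psi,\alpha}$ this delivers $k$-ary associativity since every bracketing can be rewritten as $F$ applied to a permutation of its arguments. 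Your note that one cannot invoke Theorem~\ref{symthm} here, because the appendix is intended as an independent alternative proof of its $\Psi=\{\psi\}$ case, is exactly the right structural caution, and it explains why the elementary unfolding argument is the appropriate route.
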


\setcounter{theorem}{0}
\renewcommand{\thesection}{B}

\subsection*{B\ \ Reducibility and proof of Theorem~\ref{thm:specialcasereducible}}

We sketch an alternative proof for the special case of Theorem~\ref{cor:reducible} when $\Psi=\{\psi\}$ using ``reducibility'' of quasigroups. 
In quasigroup theory, the term reducibility refers to a notion that differs from the one we introduced in Definition~\ref{def:reducibility}. To avoid confusion, we will refrain from using the word reducibility by itself when we discuss quasigroups. Instead, we introduce the terms {\em exchangeability} and {\em group-reducibility}: Exchangeability for quasigroups, defined below, corresponds exactly to the notion defined as reducibility in Definition~\ref{def:reducibility} for UE constraint functions, whereas group-reducibility refers to the standard notion of reducibility in quasigroup theory.

\begin{definition}
    A $(k-1)$-quasigroup $(\Omega,f)$ is exchangeable if for all $1\leq i\leq j\leq k-1$, $\sigma_1,\sigma_1'\in\Omega^{i-1}$, $\sigma_2,\sigma_2'\in \Omega^{k-j-1}$, and $\eta,\eta'\in \Omega^{j-i+1}$ such that
    \begin{equation*}
        f(\sigma_1\eta\sigma_2)=f(\sigma_1\eta'\sigma_2)
    \end{equation*}
    we also have
    \begin{equation*}
        f(\sigma_1'\eta\sigma_2')=f(\sigma_1'\eta'\sigma_2').
    \end{equation*}
\end{definition}

\begin{definition}
    A $(k-1)$-quasigroup $(\Omega,f)$ is strongly group-reducible if there are binary quasigroups $(\Omega,g_1),\ldots,(\Omega,g_{k-2})$ such that
    \begin{equation*}
        f(x_1\ldots x_{k-1})=g_1(g_2(g_3(\ldots g_{k-3}(g_{k-2}(x_1,x_2),x_3)\ldots ))).
    \end{equation*}
\end{definition}

\begin{definition}
    A $(k-1)$-quasigroup $(\Omega,f)$ is totally symmetric if, for all $(x_1,\ldots,x_k)\in \Omega^k$ and all $\pi\in \Pi([k])$,
    \begin{equation*}
        f(x_1,\ldots, x_{k-1})=x_k\iff f(x_{\pi(1)},\ldots,x_{\pi(k-1)})=x_{\pi(k)}.
    \end{equation*}
\end{definition}

\noindent {\em Note}. This notion of total symmetry in quasigroup corresponds to the commutativity of UE constraints we defined in our paper.

\smallskip

There is a notion of ``completely reducible" quasigroups used in \cite{stojakovic1994quasigroups}. That notion is weaker in the sense that if a quasigroup is strongly group-reducible then it is completely reducible. We refer the reader to \cite{stojakovic1994quasigroups} for the precise definition of completely reducible quasigroups.

The following is a direct corollary of
\cite[Theorem 5]{belousovsandik1966quasigroups} by  taking $k_i=1$ for $i=1\dots k-2$ in their statement.
\begin{theorem}[Belousov-Sandik \cite{belousovsandik1966quasigroups}, 1966]\label{thm:belousovred}
    If $(\Omega,f)$ is an exchangeable $(k-1)$-quasigroup, then $(\Omega,f)$ is strongly group-reducible.
\end{theorem}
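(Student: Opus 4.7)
The plan is to prove the theorem by induction on $k$. The base case $k = 3$ is immediate: a $2$-quasigroup is already a binary quasigroup, hence trivially strongly group-reducible. For the inductive step with $k \geq 4$, I would reduce the given exchangeable $(k-1)$-quasigroup $(\Omega, f)$ to a $(k-2)$-quasigroup $(\Omega, \tilde{f})$ together with a binary quasigroup $(\Omega, g_{k-2})$ satisfying
\[
f(x_1, x_2, x_3, \ldots, x_{k-1}) = \tilde{f}\bigl(g_{k-2}(x_1, x_2),\, x_3, \ldots, x_{k-1}\bigr),
\]
and then apply the induction hypothesis to $\tilde{f}$.

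The construction of $g_{k-2}$ uses exchangeability to package the first two coordinates into a single element of $\Omega$. Define an equivalence relation on $\Omega^2$ by declaring $(a, b) \sim (a', b')$ iff $f(a, b, z) = f(a', b', z)$ for some $z \in \Omega^{k-3}$. Exchangeability applied with $\sigma_1$ empty, $\eta = (a, b)$, $\eta' = (a', b')$, and $\sigma_2 = z$ shows that membership in a $\sim$-class does not depend on the witnessing $z$. Fixing any $z_0 \in \Omega^{k-3}$, the map $(a, b) \mapsto f(a, b, z_0)$ is surjective onto $\Omega$ (by the quasigroup property applied to coordinate $1$ alone) with fibers of size exactly $|\Omega|$, and these fibers coincide with the $\sim$-classes; hence $|\Omega^2/{\sim}| = |\Omega|$. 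Set $g_{k-2}(a, b) := f(a, b, z_0)$. The quasigroup property of $f$ in coordinates $1$ and $2$ immediately yields that $g_{k-2}$ is a binary quasigroup.

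Now define $\tilde{f}(y, x_3, \ldots, x_{k-1}) := f(a, b, x_3, \ldots, x_{k-1})$ for any $(a, b)$ with $g_{k-2}(a, b) = y$; this is well-defined precisely because exchangeability promotes agreement at $z_0$ to agreement in any context. A direct check shows $\tilde{f}$ is a $(k-2)$-quasigroup. The main obstacle is verifying that $\tilde{f}$ is still exchangeable. Sub-string exchanges in $\tilde{f}$ whose indices all lie in $\{2, \ldots, k-2\}$ lift directly to sub-string exchanges in $f$ at the corresponding shifted indices, where exchangeability of $f$ applies verbatim. The delicate case is a sub-string that includes the first coordinate of $\tilde{f}$: replacing $(y_1, y_2, \ldots, y_j)$ by $(y_1', y_2', \ldots, y_j')$ corresponds, after choosing representatives $(a_1, b_1)$ and $(a_1', b_1')$ with $g_{k-2}(a_i, b_i)$ equal to $y_1$ and $y_1'$ respectively, to swapping $(a_1, b_1, y_2, \ldots, y_j)$ with $(a_1', b_1', y_2', \ldots, y_j')$ inside $f$. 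Well-definedness of $\tilde{f}$ makes this lift independent of the choice of representatives, and then exchangeability of $f$ transfers the equality to any other context.

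Once exchangeability of $\tilde{f}$ is established, the induction hypothesis produces binary quasigroups $g_1, \ldots, g_{k-3}$ realizing $\tilde{f}$ as an iterated composition in the prescribed nested form, and substituting $y = g_{k-2}(x_1, x_2)$ yields the desired representation of $f$. I expect the main difficulty to lie in the bookkeeping for the mixed-index exchangeability transfer and in making precise the correspondence between $\sim$-classes and level sets used throughout the construction; conceptually straightforward, but careless handling could easily introduce a gap or a circular dependence between the well-definedness of $\tilde{f}$ and the exchangeability needed to justify it.
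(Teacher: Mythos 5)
The paper does not prove this theorem; it is cited from Belousov--Sandik (1966) as a specialization (taking $k_i = 1$ throughout) of their Theorem 5, which handles a broader class of decompositions. Your proposal, by contrast, gives a self-contained inductive proof of exactly the special case the paper needs, which is a genuinely different route: you trade the generality and brevity of a citation for an elementary argument that avoids quasigroup theory beyond the definitions already in the paper.

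Your plan is sound. The key observations are all correct: exchangeability with $\sigma_1$ empty and $\eta,\eta'$ of length two makes the relation $(a,b)\sim(a',b')$ independent of the witnessing suffix (hence an equivalence relation with $|\Omega|$ classes, each a fiber of $(a,b)\mapsto f(a,b,z_0)$); the induced $g_{k-2}$ is a binary quasigroup from the quasigroup property of $f$ in each of its first two slots; and $\tilde f$ is well-defined precisely by the exchangeability of $f$. There is no circularity: well-definedness of $\tilde f$ uses only exchangeability of $f$, and exchangeability of $\tilde f$ then follows once $\tilde f$ is in hand. For the transfer of exchangeability, the two cases you flag are indeed the right split. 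When the swapped block in $\tilde f$ misses coordinate $1$, expand $y_1$ to any representative pair $(a,b)$; the hypothesis lifts to an instance of exchangeability in $f$ at shifted indices, and conclude by substituting an arbitrary representative of $y_1'$. When the swapped block includes coordinate $1$, expand $y_1$ and $y_1'$ to representatives $(a,b)$ and $(a',b')$ and apply exchangeability of $f$ with $\sigma_1$ empty and the swapped block lengthened by one; well-definedness of $\tilde f$ ensures the conclusion is independent of the representatives chosen. I see no gap; the only caveat is that you have written a plan rather than a finished proof, and the index bookkeeping in the mixed case (positions $1,\dots,j$ of $\tilde f$ becoming positions $1,\dots,j+1$ of $f$) must be tracked carefully, but this is routine and your outline correctly anticipates it.
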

We also apply the following theorem due to Stojakovi\'c.
\begin{theorem}[Stojakovi\'c \cite{stojakovic1994quasigroups}, 1994]\label{thm:stojakovicgroupred}
    For every completely group-reducible totally symmetric $(k-1)$-quasigroup $(\Omega,f)$ with $k-1\geq 3$, there is an Abelian group $(\Omega,+)$, a permutation $\fhi$ of $\Omega$, and $b\in \Omega$ such that, for all $\sigma=\sigma_1\ldots \sigma_{k-1}\in \Omega^{k-1}$,
    \begin{equation*}
        \fhi(f(\sigma))+\sum_{i=1}^{k-1}\fhi(\sigma_i)=b.
    \end{equation*}
\end{theorem}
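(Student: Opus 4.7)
The plan rests on two classical ideas from $n$-ary quasigroup theory: use complete group-reducibility to descend to binary quasigroup operations, and use total symmetry to force those binary operations to be \emph{medial}, so that a Toyoda-Bruck-type linearization applies.

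First I would unfold complete group-reducibility, which (being weaker than, but implied by, strong group-reducibility) nevertheless furnishes at least one nontrivial binary decomposition
\[
f(x_1,\ldots,x_{k-1}) = g\bigl(h_1(x_1,\ldots,x_j),\, h_2(x_{j+1},\ldots,x_{k-1})\bigr)
\]
with $g$ a binary quasigroup and $h_1,h_2$ lower-arity quasigroups, for some $1 \leq j \leq k-2$. The plan is then to induct on the arity $k-1$, with base case $k-1=3$ where the decomposition is already in terms of binary operations.

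Next, I would exploit total symmetry of $f$ to transfer identities between the three factors. Since $f$ is invariant under any permutation of $(x_1,\ldots,x_{k-1})$, swapping positions that lie on opposite sides of the decomposition must leave the output unchanged, and unique-solvability of quasigroup equations then forces the medial law $g(g(a,b),g(c,d)) = g(g(a,c),g(b,d))$ on the binary operation $g$, together with analogous totally-symmetric-medial properties on $h_1,h_2$. By the Toyoda-Bruck theorem, a medial binary quasigroup is isotopic to an Abelian group: there exist an Abelian group $(Q,+)$, a bijection $\fhi \colon \Omega \to Q$, automorphisms $\alpha,\beta$ of $(Q,+)$, and a constant $c$ with $\fhi(g(x,y)) = \alpha(\fhi(x)) + \beta(\fhi(y)) + c$. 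Commutativity extracted from total symmetry forces $\alpha = \beta$, and after re-choosing $\fhi$ one can take $\alpha = \mathrm{id}$, yielding $\fhi(g(x,y)) = \fhi(x) + \fhi(y) + c$. Applying the inductive hypothesis to $h_1,h_2$ gives compatible formulas $\fhi(h_i(\cdot)) = \sum \fhi(\cdot) + c_i$, and combining these with the decomposition produces the desired identity $\fhi(f(\sigma)) + \sum_{i=1}^{k-1} \fhi(\sigma_i) = b$ after absorbing the constants into a single $b \in \Omega$.

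The main obstacle will be ensuring that the \emph{same} bijection $\fhi$ linearizes $f$ simultaneously at every argument slot. A naive application of Toyoda-Bruck to different binary decompositions furnished by complete group-reducibility could a priori produce different isotopies attached to different positions; total symmetry of $f$ is precisely what compels these isotopies to coincide, but translating total symmetry into this compatibility demands a careful comparison of isotopies across every admissible partition of $\{1,\ldots,k-1\}$ into two blocks, along with a matching of normalizations and constants. This coherence argument is the technical heart of Stojakovi\'c's original proof and is where I would expect to do most of the bookkeeping.
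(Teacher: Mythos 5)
This is a result the paper quotes as a black box from Stojakovi\'c~\cite{stojakovic1994quasigroups}: the paper gives no proof of it and does not even reproduce the definition of ``completely group-reducible,'' deferring to the source for that. There is therefore no proof of the paper's own against which to compare your sketch; the appendix simply applies Theorem~\ref{thm:stojakovicgroupred} together with the Belousov--Sandik theorem as external facts to give the alternative argument for the $\Psi=\{\psi\}$ case of Theorem~\ref{cor:reducible}.

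Evaluated on its own terms, your Toyoda--Bruck strategy is the classical one, but the induction as you set it up is broken at a specific joint. You propose factoring $f=g(h_1,h_2)$ and applying the inductive hypothesis to $h_1,h_2$; that requires each $h_i$ to again be a totally symmetric, completely group-reducible quasigroup of lower arity. Total symmetry of the $(k-1)$-ary $f$ is a condition on the full $k$-coordinate relation \emph{including the output slot}, and it does not descend to an inner factor $h_i$ that only sees a proper subset of the inputs and sits one level deep in the composition; in general such factors are not totally symmetric. The workable route (as in the cited sources) linearizes the whole chain of binary operations furnished by strong reducibility at once, and then uses the $\Pi([k])$-permutations that exchange an input slot with the output slot to pin down the Toyoda--Bruck automorphisms. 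This also corrects a second issue in your sketch: the normalization ``re-choose $\fhi$ so that $\alpha=\mathrm{id}$'' is not a free move, since if $\fhi(g(x,y))=\alpha(\fhi(x))+\alpha(\fhi(y))+c$ then replacing $\fhi$ by $\alpha^{-1}\circ\fhi$ yields $\fhi'(g(x,y))=\alpha(\fhi'(x))+\alpha(\fhi'(y))+\alpha^{-1}(c)$, so the automorphism persists. It is the output-involving symmetries, not mere commutativity of the inputs, that force $\alpha$ to the specific value (essentially $-\mathrm{id}$) needed for the stated identity $\fhi(f(\sigma))+\sum_i\fhi(\sigma_i)=b$.
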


{\em Proof of Theorem~\ref{thm:specialcasereducible}.\ } 
First suppose that there is a group $(\Omega,+)$ and $b\in \Omega$ such that $\psi=\psi_{(\Omega,+),b}$. To show that $\psi$ is reducible, let $d\in \{0,\ldots, k\}$ and let $\sigma,\sigma\in\Omega^d$, $\eta,\eta'\in \Omega^{k-d}$ be such that
\begin{equation}
    \sigma\eta\models \psi,\quad \sigma'\eta\models \psi,\quad \sigma\eta'\models \psi'.\label{eq:redhyp}
\end{equation}
Since $\psi=\psi_{(\Omega,+),b}$,~\eqref{eq:redhyp} is equivalent to
\begin{equation*}
    \sigma^\oplus+\eta^\oplus=b,\quad \sigma'^\oplus+\eta^\oplus=b,\quad \sigma^\oplus+\eta'^\oplus=b
\end{equation*}
which implies $\sigma^\oplus=\sigma'^\oplus$ and $\eta^\oplus =\eta'^\oplus$. Thus $\sigma'^\oplus +\eta'^\oplus=\sigma^\oplus +\eta^\oplus=b$, and therefore $\sigma'\eta'\models \psi$. Therefore $\psi$ is reducible.

For the converse direction, suppose $\{\psi\}$ is reducible. By Theorem \ref{thm:stojakovicgroupred}, it suffices to show that the $(k-1)$-quasigroup $(\Omega,f_\psi)$ is totally symmetric and completely group-reducible. To show that $(\Omega, f_\psi)$ is totally symmetric, let $\sigma_1,\ldots,\sigma_{k}\in\Omega^{k}$, and let $\pi\in \Pi([k])$. Suppose $f_\psi(\sigma_1,\ldots, \sigma_{k-1})=\sigma_k$. Then we have
\begin{equation*}
    \sigma_1,\ldots, \sigma_{k}\models\psi
\end{equation*}
and since $\psi$ is commutative,
\begin{equation*}
    \sigma_{\pi(1)},\ldots , \sigma_{\pi(k)}\models \psi.
\end{equation*}
Hence $f_\psi(\sigma_{\pi(1)},\ldots , \sigma_{\pi(k-1)})=\sigma_{\pi(k)}$, and hence $(\Omega,f_\psi)$ is totally symmetric.

To show that $(\Omega,f_\psi)$ is completely group-reducible, by Theorem~\ref{thm:belousovred} it suffices to show that $(\Omega,f_\psi)$ is exchangeable. Let $1\leq i\leq j\leq k-1$, $\sigma_1,\sigma_1'\in \Omega^{i-1}$, $\sigma_2,\sigma_2'\in \Omega^{k-j-1}$, and $\eta,\eta'\in \Omega^{j-i+1}$ be such that
\begin{equation*}
    f_\psi(\sigma_1\eta\sigma_2)=f_\psi(\sigma_1\eta'\sigma_2)
\end{equation*}
By definition of $f_\psi$,
\begin{equation*}
    \sigma_1\eta\sigma_2f_\psi(\sigma_1\eta\sigma_2)\models \psi,\quad \sigma_1\eta'\sigma_2f_\psi(\sigma_1\eta'\sigma_2)\models \psi, \quad \sigma_1'\eta\sigma_2'f_\psi(\sigma_1'\eta\sigma_2')\models \psi
\end{equation*}
Since $\psi$ is commutative and $f_\psi(\sigma_1\eta\sigma_2)=f_\psi(\sigma_1\eta'\sigma_2)$, we get
\begin{equation*}
    \eta\sigma_1\sigma_2f_\psi(\sigma_1\eta\sigma_2)\models \psi,\quad \eta'\sigma_1\sigma_2f_\psi(\sigma_1\eta\sigma_2)\models \psi, \quad \eta\sigma_1'\sigma_2'f_\psi(\sigma_1'\eta\sigma_2')\models \psi.
\end{equation*}
Therefore, by reducibility of $\{\psi\}$ and Remark~\ref{r:extension-exchange}, we can replace $\eta$ in $\eta\sigma_1'\sigma_2'f_\psi(\sigma_1'\eta\sigma_2')\models \psi$ with $\eta'$ to get $\eta'\sigma_1'\sigma_2'f_\psi(\sigma_1'\eta\sigma_2')\models \psi$. Applying commutativity again gives
\begin{equation*}
  \sigma_1'\eta'\sigma_2'f_\psi(\sigma_1'\eta\sigma_2')\models \psi  
\end{equation*}
and thus $f_\psi(\sigma_1'\eta'\sigma_2')=f_\psi(\sigma_1'\eta\sigma_2')$. Therefore $(\Omega,f_\psi)$ is exchangeable, as desired.
\qed

\end{document}